\documentclass[10pt]{amsart}

\usepackage{amsmath,amsfonts,amsthm,mathrsfs,amssymb,verbatim,cite,cases,fouridx}

\setlength{\textwidth}{125mm}
\setlength{\textheight}{185mm}
\setlength{\parindent}{8mm}
\setlength{\evensidemargin}{0pt}
\setlength{\oddsidemargin}{0pt}
\frenchspacing
\setlength{\parindent}{10pt}

\usepackage[usenames]{color}
\usepackage[dvipsnames]{xcolor}
\usepackage{hyperref}
\usepackage{bbm}
\usepackage{mathtools}
\usepackage{tikz,graphicx}
\usetikzlibrary{hobby,math}
\usepackage{upgreek}
\usepackage{accents}

\renewcommand{\sc}{\scriptstyle}
\newcommand{\red}[1]{{\color{red} #1}}
\newcommand{\blue}[1]{{\color{blue} #1}}
\newcommand{\brown}[1]{{\color{Mahogany} #1}}
\newcommand{\green}[1]{{\color{OliveGreen} #1}}

\newcommand{\pink}[1]{{\color{Lavender} #1}}
\newcommand{\orange}[1]{{\color{Orange} #1}}
\newcommand{\purple}[1]{{\color{Purple} #1}}
\newcommand{\yg}[1]{{\color{YellowGreen} #1}}
\newcommand{\A}{\mathscr{A}}
\newcommand{\C}{\mathscr{C}}
\newcommand{\D}{\mathscr{D}}
\newcommand{\G}{\mathscr{G}}
\newcommand{\gfrak}{\mathfrak{g}}
\newcommand{\hfrak}{\mathfrak{h}}
\newcommand{\ip}[2]{\langle#1,#2\rangle}

\allowdisplaybreaks[3]

\newtheorem{theorem}{Theorem}[section]
\newtheorem{lemma}[theorem]{Lemma}
\newtheorem{proposition}[theorem]{Proposition}

\newtheorem{conjecture}[theorem]{Conjecture}

\theoremstyle{remark}

\theoremstyle{definition}

\numberwithin{equation}{section}

\renewcommand{\leq}{\leqslant}
\renewcommand{\geq}{\geqslant}

\DeclareMathOperator{\HL}{HL}
\DeclareMathOperator{\mult}{mult}
\DeclareMathOperator{\lev}{lev}
\DeclareMathOperator{\eup}{e}
\newcommand{\Exp}[1]{\eup(#1)}
\DeclareMathOperator{\ch}{ch}
\DeclareMathOperator{\sgn}{sgn}
\DeclareMathOperator{\Mod}{mod}
\newcommand{\ceil}[1]{\lceil#1\rceil}
\newcommand{\floor}[1]{\lfloor#1\rfloor}
\newcommand{\Floor}[1]{\bigg\lfloor#1\bigg\rfloor}
\newcommand{\qbin}[2]{\genfrac{[}{]}{0pt}{}{#1}{#2}}
\newcommand{\qbinbig}[2]{\Bigg[\genfrac{}{}{0pt}{}{#1}{#2}\Bigg]}
\newcommand{\binombig}[2]{\Big(\!\genfrac{}{}{0pt}{}{#1}{#2}\!\Big)}
\newcommand{\abs}[1]{\vert#1\vert}
\newcommand{\la}{\lambda}
\newcommand{\La}{\Lambda}
\newcommand{\Lap}{\Uplambda}

%%%%%%%%%%%%%%%%%%%%%%%%%%%%%%%%%%%%%%%%%%%%%%%%%%%%%%%%%%%%

\begin{document}

\title[The conjectures of Capparelli, Meurman, Primc and Primc]
{Remarks on the conjectures of Capparelli, Meurman, Primc and Primc}

\author[Kanade, Russell, Tsuchioka and Warnaar]
{Shashank Kanade, Matthew C.~Russell, \\ Shunsuke Tsuchioka, S.~Ole Warnaar}
\thanks{This work is supported by the Simons Foundation Travel Support for 
Mathematicians grant \#636937 (Kanade), the JSPS Kakenhi Grant 
23K03051 and JST CREST Grant JPMJCR2113 (Tsuchioka) and the
Australian Research Council Discovery Project DP200102316 (Warnaar).}

\dedicatory{Dedicated to George Andrews and Bruce Berndt in celebration of
their 85th birthdays}

\subjclass[2020]{05A17, 05E05, 05E10, 11P84, 17B67}

\begin{abstract}
In a series of two papers, S.~Capparelli, A.~Meurman, A.~Primc,
M.~Primc (CMPP) and then M.~Primc put forth three remarkable sets of
conjectures, stating that the generating functions of coloured integer
partition in which the parts satisfy restrictions on the multiplicities
admit simple infinite product forms.
While CMPP related one set of conjectures to the principally specialised 
characters of standard modules for the affine Lie algebra
$\mathrm{C}_n^{(1)}$, finding a Lie-algebraic interpretation for the
remaining two sets remained an open problem.
In this paper, we use the work of Griffin, Ono and the fourth
author on Rogers--Ramanujan identities for affine Lie algebras to
solve this problem, relating the remaining two sets of conjectures 
to non-standard specialisations of standard modules for 
$\mathrm{A}_{2n}^{(2)}$ and $\mathrm{D}_{n+1}^{(2)}$.
We also use their work to formulate conjectures for the bivariate
generating function of one-parameter families of CMPP partitions
in terms of Hall--Littlewood symmetric functions.
We make a detailed study of several further aspects of CMPP
partitions, obtaining (i) functional equations
for bivariate generating functions which generalise the 
well-known Rogers--Selberg equations, (ii) a partial level-rank 
duality in the $\mathrm{A}_{2n}^{(2)}$ case, and (iii)
(conjectural) identities of the Rogers--Ramanujan type for
$\mathrm{D}_3^{(2)}$. \\
\textbf{Keywords.} Affine Lie algebras, CMPP conjectures,
Hall--Littlewood symmetric functions, Rogers--Ramanujan identities.
\end{abstract}

\maketitle

%%%%%%%%%%%%%%%%%%%%%%%%%%%%%%%%%%%%%%%%%%%%%%%%%%%%%%%%%%%%

\section{Introduction}

Gordon's partition theorem \cite{Gordon61} is one of the deepest and
most beautiful results in the theory of integer partitions,
generalising the combinatorial version of the Rogers--Ramanujan
identities to arbitrary odd moduli.
To describe Gordon's theorem, some basic partition-theoretic notions
are needed.
Let $\la$ be an integer partition, that is, $\la=(\la_1,\la_2,\dots)$ is
a weakly decreasing sequence of nonnegative integers such that only finitely
many $\la_i$ are strictly positive \cite{Andrews76}.
Such positive $\la_i$ are known as the parts of $\la$, and it is standard
convention to suppress the trailing sequence of zeros in a partition,
so that $(5,3,3,2,2,2,1,0,\dots)$ is written as $(5,3,3,2,2,2,1)$.
If $\abs{\la}:=\la_1+\la_2+\cdots=N$ then $\la$ is said to be a partition
of $N$, written as $\la\vdash N$.
The number of parts of $\la$ equal to $i$ is known as the multiplicity
or frequency of the part $i$, and is denoted by $f_i=f_i(\la)$. 
In the previous example, $f_1=f_5=1$, $f_2=3$, $f_3=2$ and $f_i=0$
for $i=4$ and $i\geq 6$.
Clearly, if $\la\vdash N$, then $\sum_{i\geq 1}if_i=N$.

For integers $a,k,N$ such that $0\leq a\leq k$ and $N\geq 0$,
let $A_{k,a}(N)$ be the set of partitions of $N$ into parts not
congruent to $0,\pm (a+1)$ modulo $2k+3$ and $B_{k,a}(N)$ the set of
partitions of $N$ such that
\begin{equation}\label{Eq_ff}
f_i+f_{i+1}\leq k \text{ and } f_1\leq a.
\end{equation}
(Gordon's original description of the set $B_{k,a}(N)$ is slightly different.
He defined it as the set of partitions $\la=(\la_1,\la_2,\dots)\vdash N$
such that $\la_i-\la_{i+k}\geq 2$ for all $i\geq 1$ and $f_1\leq a$.
It is not difficult to see that $\la_i-\la_{i+k}\geq 2$ for all $i$ 
if and only if $f_i+f_{i+1}\leq k$ for all $i$.)
Gordon's theorem states that $A_{k,a}(N)$ and $B_{k,a}(N)$ have equal
cardinality.

\begin{theorem}[Gordon \cite{Gordon61}] \label{Thm_Gordon}
For integers $a,k,N$ such that $0\leq a\leq k$ and $N\geq 0$,
\[
\abs{A_{k,a}(N)}=\abs{B_{k,a}(N)}.
\]
\end{theorem}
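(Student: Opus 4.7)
The plan is to prove $\abs{A_{k,a}(N)}=\abs{B_{k,a}(N)}$ by showing that the two associated generating functions in $q$ coincide, following the classical method of Andrews: derive a system of $q$-difference equations for a bivariate refinement of each side, check that both sides satisfy the same system, and convert to the product form via the Jacobi triple product identity.

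Directly from the definition,
\[
\sum_{N\geq 0}\abs{A_{k,a}(N)}\,q^N
\;=\; \frac{(q^{a+1},q^{2k+2-a},q^{2k+3};q^{2k+3})_\infty}{(q;q)_\infty},
\]
which by Jacobi's triple product equals $(q;q)_\infty^{-1}\sum_{n\in\mathbb{Z}}(-1)^n q^{((2k+3)n^2+(2a-2k-1)n)/2}$.

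On the $B_{k,a}$ side I would introduce the bivariate generating function
\[
H_{k,a}(z;q)\;=\;\sum_{\la\in B_{k,a}}z^{\ell(\la)}q^{\abs{\la}},\qquad \ell(\la)=\sum_{i\geq 1} f_i(\la).
\]
Case-analysing on $f_1(\la)=j\in\{0,\dots,a\}$, deleting the $j$ ones, and then subtracting $1$ from every remaining part produces a partition $\nu$ with $f_i(\nu)+f_{i+1}(\nu)\leq k$ and $f_1(\nu)=f_2(\la)\leq k-j$; one checks that the weight transforms as $z^{\ell(\la)}q^{\abs{\la}}=(zq)^j(zq)^{\ell(\nu)}q^{\abs{\nu}}$. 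Summing over $j$ and subtracting consecutive values of $a$ yields the Rogers--Selberg-type system
\[
H_{k,a}(z;q)-H_{k,a-1}(z;q)\;=\;(zq)^a\,H_{k,k-a}(zq;q),
\]
with $H_{k,-1}:=0$ and $H_{k,a}(0;q)=1$, and these data determine the family $\{H_{k,a}\}_{0\leq a\leq k}$ uniquely as a formal power series in $q$.

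In parallel I would introduce the Andrews--Gordon multiple sum
\[
J_{k,a}(z;q)\;=\;\sum_{n_1,\dots,n_k\geq 0}\frac{z^{N_1+\cdots+N_k}\,q^{N_1^2+\cdots+N_k^2+N_{a+1}+\cdots+N_k}}{(q;q)_{n_1}\cdots(q;q)_{n_k}},\qquad N_j:=n_j+\cdots+n_k,
\]
and verify by $q$-series manipulation that $J_{k,a}$ satisfies the same $q$-difference system with the same boundary values. Uniqueness then forces $H_{k,a}=J_{k,a}$; setting $z=1$ and invoking Jacobi's triple product recovers the product form above, completing the proof. The main obstacle is the verification that $J_{k,a}$ satisfies the Rogers--Selberg recursion: one must identify the correct summation variable to extract and reindex so that the boundary cases $a=0$ and $a=k$ (where the wrapped index $k-a$ reaches $k$ and $0$ respectively) line up without off-by-one errors. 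The combinatorial derivation for $H_{k,a}$, while requiring careful bookkeeping of how the two inequalities $f_i+f_{i+1}\leq k$ and $f_1\leq a$ transform under deletion and translation, is essentially mechanical once the refinement by $\ell(\la)$ has been fixed.
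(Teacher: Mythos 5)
The paper itself offers no proof of this theorem --- it cites \cite{Gordon61} and only later records the Rogers--Selberg system \eqref{Eq_RS} and the Andrews--Gordon multisum \eqref{Eq_AG} --- so your proposal has to stand as a self-contained argument. Its combinatorial half is correct: conditioning on $f_1(\la)=j$, deleting the $j$ ones and subtracting $1$ from every remaining part does give $H_{k,a}(z;q)=\sum_{j=0}^{a}(zq)^jH_{k,k-j}(zq;q)$, hence the Rogers--Selberg system you state, and that system together with $H_{k,a}(0;q)=1$ determines the family uniquely (induct on the exponent of $q$: every term on the right carries $zq$ in place of $z$, so the coefficient of $z^mq^N$ on the right only involves coefficients of $q^{N-m}$ with $m\geq 1$). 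This is exactly the system the paper records as \eqref{Eq_RS}.

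The genuine gap is in your final step. Jacobi's triple product evaluates a single bilateral theta sum; it does not evaluate the $k$-fold multisum $J_{k,a}(1;q)$. The assertion that $J_{k,a}(1;q)$ equals $(q^{a+1},q^{2k+2-a},q^{2k+3};q^{2k+3})_\infty/(q;q)_\infty$ is precisely the Andrews--Gordon identity \eqref{Eq_AG2}, a theorem that itself requires proof, so as written your argument assumes at the finish line the very identity that is supposed to come out of it. The standard repair (Andrews \cite{Andrews74}, \cite[Ch.~7]{Andrews76}) is to introduce a third family: explicit functions of the form $(zq;q)_\infty^{-1}$ times an alternating \emph{single} sum over $n\geq 0$, verify that these satisfy the same $q$-difference system with the same initial values, and observe that at $z=1$ the single sum folds into the bilateral theta sum $\sum_{n\in\mathbb{Z}}(-1)^nq^{((2k+3)n^2+(2a-2k-1)n)/2}$, to which the triple product does apply. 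For Gordon's theorem the multisum $J_{k,a}$ is in fact a detour: it is not needed, and routing the proof through it obliges you to establish the Andrews--Gordon identities as an input rather than obtaining them as a corollary of $H_{k,a}=J_{k,a}=C_{k,a}$.
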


For $k=1$ these are the famous Rogers--Ramanujan identities in their
combinatorial incarnation, as first stated by MacMahon \cite{MacMahon04}
and Schur~\cite{Schur17}.
Let
\[
B_{k,a}:=\bigcup_{N\geq 0} B_{k,a}(N)
\]
be the set of all Gordon partitions.
Since the generating function of partitions in $A_{k,a}(N)$ admits
a simple product form, an equivalent form of Gordon's theorem is
\begin{align}\label{Eq_Gordon}
\sum_{\la\in B_{k,a}} q^{\abs{\la}} &= 
\sum_{N=0}^{\infty} \abs{B_{k,a}(N)} q^N \\
&=\prod_{\substack{m=0\\[1pt]m\not\equiv 0,\pm(a+1)\,(\Mod 2k+3)}}^{\infty} 
\frac{1}{1-q^m}=
\frac{(q^{a+1},q^{2k-a+2},q^{2k+3};q^{2k+3})_{\infty}}{(q;q)_{\infty}},
\notag
\end{align}
where 
$(a_1,\dots,a_r;q)_{\infty}:=\prod_{i=1}^r \prod_{j\geq 0} (1-a_i q^j)$.
For a more detailed introduction to Gordon's partition theorem the reader
is referred to \cite[Chapter 7]{Andrews76} and \cite[Chapter 3]{Sills18}.

In a remarkable paper \cite{CMPP22}, Capparelli, Meurman, Primc and
Primc (CMPP) recently conjectured a beautiful generalisation of 
\eqref{Eq_Gordon} for coloured partitions of $N$.
In their partitions, each part is assigned one of $n$ possible colours, 
where the ordering among colours is immaterial.
For example, if $n=3$ with colour-set
$\{\text{\red{red}, \blue{blue}, black}\}$, 
the partitions
$(\red{5},\blue{3},3,\red{2},\red{2},2,\blue{1})$
and
$(\blue{5},\blue{3},3,\red{2},\red{2},2,1)$
are considered distinct partitions of $18$, but
$(\red{5},\blue{3},3,\red{2},\red{2},2,\blue{1})$ and
$(\red{5},3,\blue{3},\red{2},2,\red{2},\blue{1})$
represent one and the same partition.
(Alternatively, one can choose to order the colour-set
as in $\{\text{\red{red}}>\text{\blue{blue}}>\text{black}\}$
and require that parts of the same size and different colour are
ordered accordingly. This would make 
$(\red{5},3,\blue{3},\red{2},2,\red{2},\blue{1})$ inadmissible.)
Given a coloured partition $\la\vdash N$, the multiplicity or
frequency of parts of size $i$ and colour $c$ is denoted by
$f_i^{(c)}=f_i^{(c)}(\la)$, so that
$\sum_{i\geq 1}\sum_{c=1}^n if_i^{(c)}=N$.
The generalisation of \eqref{Eq_ff} to CMPP partitions is
most conveniently expressed in terms of lattice paths. 
Let $\mathcal{P}^{(n)}$ denote the set of $n$-coloured partitions.
Given $\la\in\mathcal{P}^{(n)}$ and nonnegative integers
$k_0,k_1,\dots,k_n\in\mathbb{N}_0$ which play the role
of initial or boundary conditions, arrange the frequencies of $\la$
in a semi-infinite array $\mathscr{F}_n(\la)$ of $2n$ rows as follows:
\smallskip

\tikzmath{\x=1.73205;}
\begin{center}
\begin{tikzpicture}[scale=0.5,line width=0.3pt]
\draw (0,7) node {$k_n$};
\draw (0,4.75) node {$\vdots$};
\draw (0,2) node {$k_2$};
\draw (0,0) node {$k_1$};
\draw (\x,-1) node {$k_0$};
\draw (\x,1) node {$0$};
\draw (\x,3.75) node {$\vdots$};
\draw (\x,6) node {$0$};
\draw (2*\x,0) node {\red{$f_1^{(1)}$}};
\draw (2*\x,2) node {\blue{$f_1^{(2)}$}};
\draw (2*\x,4.75) node {$\vdots$};
\draw (2*\x,7) node {\green{$f_1^{(n)}$}};
\draw (3*\x,-1) node {\red{$f_2^{(1)}$}};
\draw (3*\x,1) node {\blue{$f_2^{(2)}$}};
\draw (3*\x,3.75) node {$\vdots$};
\draw (3*\x,6) node {\green{$f_2^{(n)}$}};
\draw (4*\x,0) node {\red{$f_3^{(1)}$}};
\draw (4*\x,2) node {\blue{$f_3^{(2)}$}};
\draw (4*\x,4.75) node {$\vdots$};
\draw (4*\x,7) node {\green{$f_3^{(n)}$}};
\draw (5*\x,-1) node {\red{$f_4^{(1)}$}};
\draw (5*\x,1) node {\blue{$f_4^{(2)}$}};
\draw (5*\x,3.75) node {$\vdots$};
\draw (5*\x,6) node {\green{$f_4^{(n)}$}};
\draw (6*\x,0) node {\red{$f_5^{(1)}$}};
\draw (6*\x,2) node {\blue{$f_5^{(2)}$}};
\draw (6*\x,4.75) node {$\vdots$};
\draw (6*\x,7) node {\green{$f_5^{(n)}$}};
\draw (7*\x,-1) node {\red{$f_6^{(1)}$}};
\draw (7*\x,1) node {\blue{$f_6^{(2)}$}};
\draw (7*\x,3.75) node {$\vdots$};
\draw (7*\x,6) node {\green{$f_6^{(n)}$}};
\draw (8*\x,0) node {\red{$f_7^{(1)}$}};
\draw (8*\x,2) node {\blue{$f_7^{(2)}$}};
\draw (8*\x,4.75) node {$\vdots$};
\draw (8*\x,7) node {\green{$f_7^{(n)}$}};
\draw (9*\x,-1) node {\red{$f_8^{(1)}$}};
\draw (9*\x,1) node {\blue{$f_8^{(2)}$}};
\draw (9*\x,3.5) node {$\vdots$};
\draw (9*\x,6) node {\green{$f_8^{(n)}$}};
\draw (10*\x,-0.5) node {\red{$\dots$}};
\draw (10*\x,1.5) node {\blue{$\dots$}};
\draw (10*\x,6.5) node {\green{$\dots$}};
\end{tikzpicture}
\end{center}

\smallskip\noindent
For $c\in\{1,\dots,n\}$, let $f_0^{(c)}=k_0\delta_{c,1}$ (with $\delta_{i,j}$
the Kronecker delta) and $f_{-1}^{(c)}=k_c$.
Then a path $P$ on $\mathscr{F}_n(\la)$ is a sequence
\[
(p_1,p_2,\dots,p_{2n})
\]
such that
\[
p_{2c-1}\in\big\{f_0^{(c)},f_2^{(c)},f_4^{(c)},\dots\big\}, \quad
p_{2c}\in\big\{f_{-1}^{(c)},f_1^{(c)},f_3^{(c)},\dots\big\},
\]
and, dropping the colour labels, such that $f_i$ is followed by either 
$f_{i-1}$ (which requires that $i\geq 0$) or $f_{i+1}$.
Two examples of paths on $\mathscr{F}_3(\la)$ are shown below, 
where the (superfluous) colour labels have been omitted:

\smallskip

\begin{center}
\begin{tikzpicture}[scale=0.5,line width=0.3pt]
\foreach \n in {1,2,3} {
% first path
\filldraw[black,fill=yellow!10!white] (\n*\x,\n-2) circle (6mm);
\filldraw[black,fill=yellow!10!white] (\n*\x+\x,\n+1) circle (6mm);
% second path
\filldraw[black,fill=orange!25!white] (10*\x-\n*\x,\n-2) circle (6mm);
\filldraw[black,fill=orange!25!white] (\n*\x+7*\x,\n+1) circle (6mm);};
\draw (0,4) node {$k_3$};
\draw (0,2) node {$k_2$};
\draw (0,0) node {$k_1$};
\draw (\x,-1) node {$k_0$};
\draw (\x,1) node {$0$};
\draw (\x,3) node {$0$};
\draw (2*\x,0) node {\red{$f_1$}};
\draw (2*\x,2) node {\blue{$f_1$}};
\draw (2*\x,4) node {\green{$f_1$}};
\draw (3*\x,-1) node {\red{$f_2$}};
\draw (3*\x,1) node {\blue{$f_2$}};
\draw (3*\x,3) node {\green{$f_2$}};
\draw (4*\x,0) node {\red{$f_3$}};
\draw (4*\x,2) node {\blue{$f_3$}};
\draw (4*\x,4) node {\green{$f_3$}};
\draw (5*\x,-1) node {\red{$f_4$}};
\draw (5*\x,1) node {\blue{$f_4$}};
\draw (5*\x,3) node {\green{$f_4$}};
\draw (6*\x,0) node {\red{$f_5$}};
\draw (6*\x,2) node {\blue{$f_5$}};
\draw (6*\x,4) node {\green{$f_5$}};
\draw (7*\x,-1) node {\red{$f_6$}};
\draw (7*\x,1) node {\blue{$f_6$}};
\draw (7*\x,3) node {\green{$f_6$}};
\draw (8*\x,0) node {\red{$f_7$}};
\draw (8*\x,2) node {\blue{$f_7$}};
\draw (8*\x,4) node {\green{$f_7$}};
\draw (9*\x,-1) node {\red{$f_8$}};
\draw (9*\x,1) node {\blue{$f_8$}};
\draw (9*\x,3) node {\green{$f_8$}};
\draw (10*\x,0) node {\red{$f_9$}};
\draw (10*\x,2) node {\blue{$f_9$}};
\draw (10*\x,4) node {\green{$f_9$}};
\draw (11*\x,-0.5) node {\red{$\dots$}};
\draw (11*\x,1.5) node {\blue{$\dots$}};
\draw (11*\x,3.5) node {\green{$\dots$}};
\end{tikzpicture}
\end{center}

\smallskip

\noindent
By slight abuse of notation, we write $P\in\mathscr{F}_n(\la)$ if $P$
is a path on $\mathscr{F}_n(\la)$.
A partition $\la\in\mathcal{P}^{(n)}$ is a CMPP-partition if for all
$P=(p_1,\dots,p_{2n})\in\mathscr{F}_n(\la)$,
\[
\sum_{i=1}^{2n} p_i\leq k_0+\dots+k_n.
\]
The set of all CMPP-partitions for given fixed $k_0,\dots,k_n$ will
be denoted by $\mathscr{A}_{k_0,\dots,k_n}$.\footnote{The choice
for the letter $\mathscr{A}$ reflects the fact that these partitions are
related to $\mathrm{A}^{(2)}_{2n}$. 
Later we will describe variants of these partitions for the affine
Lie algebras $\mathrm{C}^{(1)}_n$ and $\mathrm{D}^{(2)}_{n+1}$ which
will be denoted by $\mathscr{C}_{k_0,\dots,k_n}$ and
$\mathscr{D}_{k_0,\dots,k_n}$ respectively.}
If $n=1$ then the set of paths on $\mathscr{F}_1(\la)$ is given by 
\[
\{(k_0,k_1),(k_0,f_1)\}\cup\{(f_{2i},f_{2i-1}): i\geq 1\}
\cup\{(f_{2i},f_{2i+1}): i\geq 1\}.
\]
Hence
\begin{align}\label{Eq_B-B}
\A_{k_0,k_1}&=\big\{\la\in\mathcal{P}^{(1)}: f_1(\la)\leq k_1
\text{ and } f_i(\la)+f_{i+1}(\la)\leq k_0+k_1
\text{ for all $i\geq 1$} \big\} \\[1mm]
&=B_{k_0+k_1,k_1}. \notag
\end{align}
CMPP conjectured the following generalisation of Gordon's partition
theorem in the form \eqref{Eq_Gordon}.
For $a,a_1,\dots,a_r\neq 0$, 
let $\theta(a,q):=(a,q/a;q)_{\infty}$ be a modified
theta function and $\theta(a_1,\dots,a_r;q):=\prod_{i=1}^r \theta(a_i;q)$.

\begin{conjecture}[CMPP {\cite[Conjecture 4.1]{CMPP22}}]\label{Con_A2n2}
For $k,n$ nonnegative integers and $k_0,\dots,k_n$
nonnegative integers such that $k_0+\dots+k_n=k$,
\begin{align*}
\sum_{\la\in\A_{k_0,\dots,k_n}} q^{\abs{\la}}
&=\frac{(q^{2k+2n+1};q^{2k+2n+1})_{\infty}^n}{(q;q)_{\infty}^n}
\prod_{i=1}^n \theta\big(q^{\la_i+n-i+1};q^{2k+2n+1}\big) \\
&\quad\times\prod_{1\leq i<j\leq n}
\theta\big(q^{\la_i-\la_j-i+j},q^{\la_i+\la_j+2n-i-j+2};q^{2k+2n+1}\big),
\end{align*}
where
$\la_i:=k_i+\dots+k_n$ for $1\leq i\leq n$.
\end{conjecture}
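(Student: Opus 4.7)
The plan is to identify both sides of the identity with a common character-theoretic object attached to $\mathrm{A}_{2n}^{(2)}$ at level $k$, following the Griffin--Ono--Warnaar line of attack signalled in the abstract.

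First, I would interpret the right-hand side via the Weyl--Kac character formula. The factors $\theta(q^{\la_i+n-i+1};q^{2k+2n+1})$ and $\theta(q^{\la_i-\la_j-i+j}, q^{\la_i+\la_j+2n-i-j+2};q^{2k+2n+1})$ have precisely the shape produced by the affine denominator/numerator of type $\mathrm{A}_{2n}^{(2)}$ under a suitable specialisation of the formal exponentials; compare the Macdonald identity for this root system. The relation $\la_i=k_i+\cdots+k_n$ encodes the highest weight $\Lambda=k_0\Lambda_0+\cdots+k_n\Lambda_n$ of level $k$, so the RHS should be recognised, after clearing the $(q;q)_\infty^n$ denominator, as a non-principal specialisation of $\ch L(\Lambda)$ for $\mathrm{A}_{2n}^{(2)}$.

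Second, I would refine the left-hand side to a bivariate generating function, weighting each $\la\in\A_{k_0,\ldots,k_n}$ by $q^{\abs{\la}}$ and an additional variable $z$ raised to a suitable charge statistic (for example, $\sum_i f_i^{(n)}(\la)$, tracking parts of the ``top'' colour). Because the path condition on $\mathscr{F}_n(\la)$ is localised column-by-column, splitting on the contents of the leftmost column --- and identifying the remainder, after deleting that column, with the frequency array of a CMPP partition with shifted boundary conditions --- should yield Rogers--Selberg-type functional equations relating these bivariate generating functions at varying $(k_0,\ldots,k_n)$ and rescaled $z$. This is what the abstract flags as item (i).

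Third, I would invoke the Griffin--Ono--Warnaar Rogers--Ramanujan identities for affine Lie algebras, which express the relevant non-principal specialisations of $\ch L(\Lambda)$ for $\mathrm{A}_{2n}^{(2)}$ as solutions of a matching Rogers--Selberg system, with the same boundary data at $k=0$ (the empty partition) and $z\to 0$. Uniqueness of the solution, combined with induction on $k$, then equates the bivariate partition generating function with the character, and evaluating at the specialisation of $z$ that recovers $\sum_{\la\in\A_{k_0,\ldots,k_n}} q^{\abs{\la}}$ delivers the conjectured identity. The main obstacle, in my view, is step two: constructing the bivariate refinement so that its Rogers--Selberg system closes on the finite family indexed by compositions of $k$ into $n+1$ parts, and does so in \emph{precisely} the form satisfied by the Griffin--Ono--Warnaar characters. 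The path conditions across the $2n$ rows of $\mathscr{F}_n(\la)$ are considerably more intricate than in the $n=1$ case recorded in \eqref{Eq_B-B}, and disentangling the overlap of paths that share columns --- together with pinning down the correct statistic and the correct specialisation of $z$ --- is where the essential combinatorial work should lie.
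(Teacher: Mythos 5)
The statement you are asked about is a \emph{conjecture}: the paper does not prove Conjecture~\ref{Con_A2n2}, and it remains open for general $k$ and $n$. What the paper actually establishes is much more limited: (i) the infinite product on the right-hand side equals the non-standard specialisation $\varphi_n(\chi_\La)$ of the $\mathrm{A}^{(2)}_{2n}$-character (equation \eqref{Eq_A2n-product}, proved in the appendix via the $\mathrm{B}^{(1)}_n$ Macdonald identity), which recasts the conjecture as Conjecture~\ref{Con_A2n2-rep} but proves nothing about the left-hand side; (ii) the level-one case $k=1$, via the Jing--Misra--Savage coloured partition theorem and, independently, the Corteel--Welsh-type functional equations \eqref{Eq_MR} solved in \cite{Russell23}; and (iii) an explicitly \emph{incomplete} set of functional equations (Proposition~\ref{Prop_fun}) for the bivariate generating functions. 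Your proposal cannot be compared with ``the paper's proof'' because there is none; more importantly, it is not itself a proof.

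The concrete gaps are in your steps two and three. In step two you assert that the path condition on $\mathscr{F}_n(\la)$ is ``localised column-by-column''; it is not — a path visits all $2n$ rows and may move back and forth between adjacent columns, so the defining inequality couples arbitrarily many columns, and this is precisely why the column-deletion argument that works for $n=1$ (the Rogers--Selberg equations \eqref{Eq_RS}) does not close up for general $n$. The paper's Proposition~\ref{Prop_fun} recovers only the equations attached to the weights $(k-a)\Lap_0+a\Lap_n$ and $(k-1)\Lap_0+\Lap_1$, and the authors state that a complete, uniquely-determining system is only \emph{expected} to exist. In step three you invoke Griffin--Ono--Warnaar characters ``as solutions of a matching Rogers--Selberg system''; no such result exists. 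What \cite{GOW16} provides is the product evaluation \eqref{Eq_A2n-product} and Hall--Littlewood/Andrews--Gordon-type sum representations for the two extremal weights $k\Lap_0$ and $k\Lap_n$ only — and even the bivariate refinement of those (Conjecture~\ref{Con_A2n2-qseries}) is conjectural in this paper, proved only for $k=1$. So both halves of your proposed uniqueness argument — a closed combinatorial system and a character-side solution of the same system — are missing, and they are exactly the open problems the paper identifies.
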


It should be remarked that CMPP do not state the product on the right
in the explicit form as shown above, but instead provide a prescription
for obtaining the product for fixed $k_0,\dots,k_n$ using what they 
refer to as `congruence triangles'.
The rationale for including the trivial $n=0$ case is that from a
representation-theoretic perspective it is natural to include $k=0$
(for which the product trivialises to $1$) in the conjecture.
Since Conjecture~\ref{Con_A2n2} exhibits a partial `level-rank duality'
which interchanges the roles of $k$ and $n$, it is natural to let
$k$ and $n$ have the same range.

CMPP write about their conjecture that ``for $n>1$ there is no obvious
connection [\dots] with representation theory of affine Lie algebras.''
The first purpose of this paper is to point out that there is a
connection between Conjecture~\ref{Con_A2n2} and the representation
theory of affine Lie algebras, very different from the well-known
interpretation of the $n=1$ case in terms of standard modules for
$\mathrm{A}^{(1)}_1$ at level $2k+1$, see e.g.,~\cite{LM78,Lepowsky82,LW85}.
Adopting standard notation and terminology (see Section~\ref{Sec_Lie}
for details), let $L(\La)$ be the $\mathrm{A}^{(2)}_{2n}$-standard
module of highest weight $\La$.
Assuming the same relation between the $k_i$ and $\la_i$ as in
Conjecture~\ref{Con_A2n2}, parametrise $\La$ in terms of the
fundamental weights $\La_0,\dots,\La_n$ of $\mathrm{A}^{(2)}_{2n}$ as 
\begin{align}\label{Eq_La-para}
\La&=2(k-\la_1)\La_0+(\la_1-\la_2)\La_1+\dots+
(\la_{n-1}-\la_n)\La_{n-1}+\la_n\La_n \\[1mm]
&=2k_0\La_0+k_1\La_1+\dots+k_{n-1}\La_{n-1}+k_n\La_n. \notag
\end{align}
Here the usual labelling of the vertices of the $\mathrm{A}^{(2)}_{2n}$
Dynkin diagram is assumed:
\label{page_A2n}
\begin{center}
\begin{tikzpicture}[scale=0.6]
%twistedA2
\draw (-1.2,0) node {$\mathrm{A}^{(2)}_2$:};
\draw (0,0.1)--(1,0.1);
\draw (0, 0.033)--(1, 0.033);
\draw (0,-0.033)--(1,-0.033);
\draw (0,-0.1)--(1,-0.1);
\draw (0.6,0.2)--(0.4,0)--(0.6,-0.2);
\draw[fill=red] (0,0) circle (0.08cm);
\draw[fill=blue] (1,0) circle (0.08cm);
\draw (0,-0.45) node {$\scriptstyle 0$};
\draw (1,-0.45) node {$\scriptstyle 1$};
%twistedA2n
\draw (7.8,0) node {$\mathrm{A}^{(2)}_{2n}$:};
\draw (9, 0.07)--(10, 0.07);
\draw (9,-0.07)--(10,-0.07);
\draw (9.6,0.2)--(9.4,0)--(9.6,-0.2);
\draw (10,0)--(14,0);
\draw (14.6,0.2)--(14.4,0)--(14.6,-0.2);
\draw (14, 0.07)--(15,0.07);
\draw (14,-0.07)--(15,-0.07);
\draw[fill=red] (9,0) circle (0.08cm);
\foreach \x in {10,...,15} \draw[fill=blue] (\x,0) circle (0.08cm);
\draw (9,-0.45) node {$\scriptstyle 0$};
\draw (10,-0.45) node {$\scriptstyle 1$};
\draw (15,-0.45) node {$\scriptstyle n$};
\end{tikzpicture}
\end{center}

\noindent
where, for now, the colour coding of the vertices may be ignored. 
Since $\lev(\La_0)=1$ and $\lev(\La_i)=2$ for all $i\geq 1$, the level
of $\La$ is even and given by $\lev(\La)=2k$.
Denote the character of $L(\La)$ by $\ch L(\La)$, and define
the normalised character $\chi_{\La}:=\Exp{-\La}\ch L(\La)$,
where $\Exp{\cdot}$ is a formal exponential.
Then
\[
\chi_{\La}\in\mathbb{Z}[[\Exp{-\alpha_0},\dots,\Exp{-\alpha_n}]],
\]
where $\alpha_0,\dots,\alpha_n$ are the simple roots of 
$\mathrm{A}^{(2)}_{2n}$.
If $\varphi_n:\mathbb{Z}[[\Exp{-\alpha_0},\dots,\Exp{-\alpha_n}]]\to
\mathbb{Z}[[q]]$ is the specialisation 
\[
\varphi_n(\Exp{-\alpha_0})=-1\quad\text{and}\quad
\varphi_n(\Exp{-\alpha_i})=q\;\text{ for $i\in\{1,\dots,n\}$},
\]
then it follows from \cite[Equation (3.31)]{GOW16} (see the 
appendix for details) that
\begin{align}\label{Eq_A2n-product}
\varphi_n(\chi_\La)&=
\frac{(q^{2k+2n+1};q^{2k+2n+1})_{\infty}^n}{(q;q)_{\infty}^n}
\prod_{i=1}^n \theta\big(q^{\la_i+n-i+1};q^{2k+2n+1}\big) \\
&\quad\times \prod_{1\leq i<j\leq n}
\theta\big(q^{\la_i-\la_j-i+j},q^{\la_i+\la_j+2n-i-j+2};q^{2k+2n+1}\big).
\notag
\end{align}
(If the coefficient of $\La_0$ in $\La$ is odd, so that $\La$
has odd level, then $\varphi_n(\chi_\La)=0$.)
Importantly, the specialisation $\varphi_n$ does not correspond to the
much-studied principal gradation of $L(\La)$ and hence the product
\eqref{Eq_A2n-product} does not follow from Lepowsky's numerator
formula~\cite{Lepowsky82}.
By \eqref{Eq_A2n-product} the CMPP conjecture can be expressed
in terms of $\mathrm{A}^{(2)}_{2n}$ as follows.

\begin{conjecture}[Representation theoretic form of
Conjecture~\ref{Con_A2n2}]\label{Con_A2n2-rep}
For nonnegative integers $k_0,\dots,k_n$, let
$L(\La)$ be the $\mathrm{A}^{(2)}_{2n}$-standard module
of highest weight $\La=2k_0\La_0+k_1\La_1+\dots+k_n\La_n$.
Then
\begin{equation}\label{Eq_CMPP2}
\sum_{\la\in\A_{k_0,\dots,k_n}}q^{\abs{\la}}=\varphi_n(\chi_{\La}).
\end{equation}
\end{conjecture}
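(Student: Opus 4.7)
The plan is to reduce Conjecture \ref{Con_A2n2-rep} to the combinatorial Conjecture \ref{Con_A2n2}: since \eqref{Eq_A2n-product} already gives the explicit theta-product form of $\varphi_n(\chi_\La)$, it suffices to show that $\sum_{\la\in\A_{k_0,\dots,k_n}}q^{\abs{\la}}$ equals the same product. The base case $n=1$ is then immediate from \eqref{Eq_B-B} together with Gordon's theorem (with $k=k_0+k_1$ and $a=k_1$), so the real content lies in $n\geq 2$.

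For $n\geq 2$ I would proceed by a functional-equation argument in the spirit of Andrews' proof of Gordon's theorem. Introduce a bivariate refinement
\[
F_{k_0,\dots,k_n}(z,q):=\sum_{\la\in\A_{k_0,\dots,k_n}}
z^{\ell(\la)}\,q^{\abs{\la}},
\]
with $z$ a catalytic variable tracking, say, the total number of parts. Classifying CMPP partitions by their parts of size $1$ in each of the $n$ colours and absorbing these into the boundary data $k_0,\dots,k_n$, the path conditions on $\mathscr{F}_n(\la)$ should yield a finite linear recursion expressing $F_{k_0,\dots,k_n}(z,q)$ as a combination of $F_{k_0',\dots,k_n'}(qz,q)$ for shifted boundary tuples: a multi-coloured Rogers--Selberg system of the type advertised in the abstract. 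One would then introduce a bivariate lift of the product side — most naturally via Hall--Littlewood symmetric functions, following Griffin, Ono and the fourth author — and verify that it satisfies the very same recursion with the same initial conditions; uniqueness of the power-series solution in $z$ would conclude.

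The main obstacle is verifying the recursion on the product side. On the partition side the recursion is essentially bookkeeping, but on the theta-product side it becomes a multi-parameter family of identities among products
\[
\theta\big(q^{\la_i-\la_j-i+j},q^{\la_i+\la_j+2n-i-j+2};q^{2k+2n+1}\big)
\]
under unit shifts of the $\la_i$ — a non-principal analogue of the Macdonald identity for $\mathrm{A}^{(2)}_{2n}$. For $n=1$ this single identity is classical, but for $n\geq 2$ the system is genuinely multi-dimensional and I do not expect it to follow from elementary $q$-series manipulation. Matching Hall--Littlewood expansions on both sides, so that the recursion corresponds to a Pieri-type rule specialised under $\varphi_n$, looks like the most promising framework, and it is precisely this specialisation step — translating the combinatorial rule for adding a part in a fixed colour into the correct Pieri coefficient at the non-standard specialisation $\Exp{-\alpha_0}\mapsto -1$, $\Exp{-\alpha_i}\mapsto q$ — that I expect to be the crux of the argument.
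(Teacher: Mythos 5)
The statement you are asked about is a \emph{conjecture}: the paper does not prove \eqref{Eq_CMPP2} for general $k_0,\dots,k_n$ and $n\geq 2$, and neither do you. What the paper actually establishes is precisely the content of your first paragraph: the non-standard specialisation formula \eqref{Eq_A2n-product} (proved in the appendix via the Weyl--Kac formula rewritten as a $\mathrm{B}_n$-type determinant and the $\mathrm{B}_n^{(1)}$ Macdonald identity) shows that $\varphi_n(\chi_\La)$ equals the theta product in Conjecture~\ref{Con_A2n2}, so that Conjecture~\ref{Con_A2n2-rep} is a reformulation of Conjecture~\ref{Con_A2n2} rather than a separate claim. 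Your observation that the $n=1$ case then follows from \eqref{Eq_B-B} and Gordon's theorem is correct and consistent with the paper.

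Everything after your first paragraph is a research programme, not a proof, and you are candid that its crux is unresolved. For what it is worth, the paper pursues exactly this programme and also fails to close it: Proposition~\ref{Prop_fun} gives only ``an incomplete set of functional equations'' generalising the Rogers--Selberg system, the Hall--Littlewood lift you propose is Conjecture~\ref{Con_A2n2-qseries} (proved only for $k=1$ via Proposition~\ref{Prop_GOW} and the identification with $F^{(n)}_{a,\delta}$), and the extension to arbitrary weights is the subject of the avowedly conjectural Section~\ref{Sec_completion}. So the gap you identify --- verifying the recursion, or equivalently the Pieri-type rule, on the product/Hall--Littlewood side under the specialisation $\varphi_n$ --- is a genuinely open problem, and your proposal should be read as a plausible strategy for it rather than as a proof of the stated conjecture.
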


As remarked above, the set $A_{k,a}(N)$ has a simple generating
function given by the infinite product on the right of \eqref{Eq_Gordon}.
In \cite{Andrews74}, Andrews showed that the two-variable generating
function of Gordon partitions is expressible as a $k$-fold multisum:
\begin{equation}\label{Eq_AG}
\sum_{\la\in B_{k,a}} z^{l(\la)} q^{\abs{\la}}
=\sum_{r_1,\dots,r_k\geq 0}
\frac{z^{r_1+\dots+r_k} q^{r_1^2+\dots+r_k^2+r_{a+1}+\dots+r_k}}
{(q;q)_{r_1-r_2}\cdots(q;q)_{r_{k-1}-r_k}(q;q)_{r_k}},
\end{equation}
where $(a;q)_n:=(a;q)_{\infty}/(aq^n;q)_{\infty}=
\prod_{i=0}^{n-1}(1-aq^i)$.
Note in particular that $1/(q;q)_n$ vanishes for $n$ a nonnegative
integer so that the summand is nonzero for
$r_1\geq r_2\geq\cdots\geq r_k$ only.
Equating the $z=1$ case of \eqref{Eq_AG} with the right-hand side of
\eqref{Eq_Gordon} results in what are known as the Andrews--Gordon
identities \cite{Andrews74}:
\begin{equation}\label{Eq_AG2}
\sum_{r_1,\dots, r_k\geq 0}
\frac{q^{r_1^2+\dots+r_k^2+r_{a+1}+\dots+r_k}}
{(q;q)_{r_1-r_2}\cdots(q;q)_{r_{k-1}-r_k}(q;q)_{r_k}}
=\frac{(q^{a+1},q^{2k-a+2},q^{2k+3};q^{2k+3})_{\infty}}{(q;q)_{\infty}}.
\end{equation}
The two $k=1$ cases are the Rogers--Ramanujan identities in
their original analytic form~\cite{Rogers94,Rogers17,RR19}.
 
The second purpose of this paper is to give a conjectural and highly
incomplete generalisation of \eqref{Eq_AG} to CMPP partitions or, as
we may now call them, $\mathrm{A}_{2n}^{(2)}$-partitions.
Let $P_{\la}(x_1,x_2,\dots;t)$ be the Hall--Littlewood symmetric
function in countably-many variables, indexed by the partition $\la$,
see Section~\ref{Sec_HL} for details.
Furthermore, for $\la=(\la_1,\la_2,\dots)$ a partition, let $2\la$
denote the partition $(2\la_1,2\la_2,\dots)$.
Finally, if
\[
k_0,\dots,k_n=
\underbrace{i_1,\dots,i_1}_{m_1 \text{ times}},
\underbrace{i_2,\dots,i_2}_{m_2 \text{ times}},\dots\dots,
\underbrace{i_r,\dots,i_r}_{m_r \text{ times}},
\]
where $m_1+\dots+m_r=n+1$, we more succinctly write this as
$k_0,\dots,k_n=i_1^{m_1},i_2^{m_2},\dots,i_r^{m_r}$, typically
omitting those exponents $m_i$ that are equal to $1$.
For example, if $k_0,\dots,k_8=1,0,0,0,2,1,3,3,0$ this would be
shortened to $k_0,\dots,k_8=1,0^3,2,1,3^2,0$.

\begin{conjecture}\label{Con_A2n2-qseries}
For $k$ a nonnegative integer and $n$ a positive integer,
\begin{subequations}
\begin{align}\label{Eq_kLambdan}
\sum_{\la\in\A_{0^n,k}} z^{l(\la)} q^{\abs{\la}}
&= \sum_{\substack{\la \\[1pt] \la_1\leq k}}
(zq)^{\abs{\la}} P_{2\la}\big(1,q,q^2,\dots;q^{2n-1}\big) 
\intertext{and}
\label{Eq_2kLambda0}
\sum_{\la\in\A_{k,0^n}} z^{l(\la)} q^{\abs{\la}}
&=\sum_{\substack{\la \\[1pt] \la_1\leq k}}
(zq^2)^{\abs{\la}} P_{2\la}\big(1,q,q^2,\dots;q^{2n-1}\big).
\end{align}
\end{subequations}
\end{conjecture}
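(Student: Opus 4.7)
I would follow the scheme Andrews used to prove \eqref{Eq_AG}: show that both sides of \eqref{Eq_kLambdan}, and separately of \eqref{Eq_2kLambda0}, satisfy the same Rogers--Selberg-type $q$-difference recursion in $k$ viewed as functions of $z$, and match them at $k=0$. The initial case is trivial: $\A_{0^{n+1}}=\{\emptyset\}$ on the left, and on the right only the empty partition contributes, so both sides equal $1$. The ``generalised Rogers--Selberg equations'' promised by item (i) of the abstract are precisely what drive the combinatorial half of this strategy.

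\textbf{The two recursions.} Denote the LHS of \eqref{Eq_kLambdan} by $L_k^{(n)}(z,q)$. Because the boundary data $0^n,k$ is extremal, I would classify $\la\in\A_{0^n,k}$ by the multiplicities $f_1^{(1)},\dots,f_1^{(n)}$ of the colour-labelled parts of size $1$; the path condition on $\mathscr{F}_n(\la)$ confines these to a finite range, and stripping the corresponding parts and shifting the rest down returns a CMPP-partition counted by $L_{k'}^{(n)}(zq,q)$ for some explicit $k'$. For $\A_{k,0^n}$ the loaded column $k_0=k$ precludes frequency-one parts, which shifts $z\mapsto zq^2$ rather than $zq$ and accounts for the difference between \eqref{Eq_kLambdan} and \eqref{Eq_2kLambda0}. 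On the right, write $R_k^{(n)}(z,q)$ for the sum. Peeling off the columns of $2\la$ of height $k$ via Pieri's rule for Hall--Littlewood polynomials, combined with the homogeneity $P_{2\la}(qx;t)=q^{2\abs{\la}}P_{2\la}(x;t)$, should yield a matching recursion reducing $R_k^{(n)}$ to $R_{k-1}^{(n)}$; the specialisation $t=q^{2n-1}$ is the one dictated by \eqref{Eq_A2n-product} and central to the Griffin--Ono--Warnaar framework \cite{GOW16} for $\mathrm{A}^{(2)}_{2n}$.

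\textbf{Obstacle and intermediate target.} Reconciling the two recursions is the crux: on one side the decomposition is read from lattice paths, on the other from Pieri coefficients, and these languages are very different. A practical stepping stone is the $z=1$ case, where (granted Conjecture~\ref{Con_A2n2-rep}) the identity collapses to the character formula
\[
\varphi_n(\chi_{k\La_n})=\sum_{\la:\,\la_1\leq k}q^{\abs{\la}}P_{2\la}\big(1,q,q^2,\dots;q^{2n-1}\big),
\]
together with its $2k\La_0$ analogue in which $q^{\abs{\la}}$ is replaced by $q^{2\abs{\la}}$; such Hall--Littlewood character identities should already fall within the reach of \cite{GOW16}. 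Lifting to general $z$ then reduces to showing that the length statistic $l(\la)$ on the LHS is tracked on the RHS through the same recursion, and this refinement from the principally specialised character to its $z$-refined bivariate generating function is where I expect the main obstacle to lie.
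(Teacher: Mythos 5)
The statement you are trying to prove is a \emph{conjecture} in the paper (Conjecture~\ref{Con_A2n2-qseries}); the paper contains no proof of it, and your proposal does not close the gap either. What the paper actually establishes is: the case $n=1$, which reduces via \eqref{Eq_B-B} and \eqref{Eq_PS-2lambda} to Andrews's identity \eqref{Eq_AG}; the case $k=1$ for all $n$, proved by combining Proposition~\ref{Prop_GOW} (a Bailey-pair evaluation of $P_{(2^r)}(1,q,q^2,\dots;q^{2n-1})$) with the known solution \eqref{Eq_A-F} of the level-one functional equations \eqref{Eq_MR} from \cite{Russell23,CW19}; and the observation that the $z=1$ case is, by \cite[Theorem 1.1]{GOW16}, \emph{equivalent} to Conjecture~\ref{Con_A2n2-rep} for the weights $0^n,k$ and $k,0^n$. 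Your proposal is closest in spirit to the paper's treatment of $k=1$, but as a route to the general statement it founders on two concrete points.

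First, the recursion you want on the combinatorial side does not exist yet. The ``generalised Rogers--Selberg equations'' of Proposition~\ref{Prop_fun} are explicitly an \emph{incomplete} set: \eqref{Eq_fun} and \eqref{Eq_fun2} relate $\mathcal{A}^{(n)}_{(k-a)\Lap_0+a\Lap_n}$ and $\mathcal{A}^{(n)}_{(k-1)\Lap_0+\Lap_1}$ to generating functions indexed by \emph{other} weights of the same level (with nonzero coefficients on $\Lap_1,\dots,\Lap_{n-1}$), so iterating them drags in the full set $\{\mathcal{A}^{(n)}_{\La}(z)\}_{\lev(\La)=2k}$, for which no determining system is known when $n\geq 2$ and $k\geq 2$. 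In particular there is no closed recursion in $k$ alone for $\A_{0^n,k}$ or $\A_{k,0^n}$, and no Pieri-type recursion for the Hall--Littlewood side is established in the paper either. Second, your proposed ``intermediate target'' at $z=1$ is not a stepping stone: the Hall--Littlewood sum is already evaluated as the product $\varphi_n(\chi_{k\Lap_n})$ (resp.\ $\varphi_n(\chi_{k\Lap_0})$) by \cite{GOW16}, so the $z=1$ identity is precisely the open CMPP Conjecture~\ref{Con_A2n2} for these weights --- the combinatorial half, not the symmetric-function half, is the hard part, and it does not ``fall within the reach of \cite{GOW16}''. Your strategy is a reasonable description of what a proof ought to look like, but both of its pillars (a complete determining system of functional equations, and a matching recursion for the Hall--Littlewood sums) are exactly the missing ingredients that keep the statement conjectural; Section~\ref{Sec_completion} of the paper is devoted to partial progress in this direction.
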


Given a partition, let $\la'$ be its conjugate, i.e.,
$\la'_i=f_i(\la)-f_{i+1}(\la)$.
Making the identification $\la'_i=r_i$, we
have~\cite[page 213]{Macdonald95}
\begin{equation}\label{Eq_PS-2lambda}
(zq)^{\abs{\la}} P_{2\la}(1,q,q^2,\dots;q)
=\prod_{i\geq 1} \frac{z^{r_i}q^{r_i^2}}{(q;q)_{r_i-r_{i+1}}}.
\end{equation}
Recalling \eqref{Eq_B-B} and using that $\la_1\leq k$ implies that 
$r_i=\la'_i=0$ for $i>k$, it follows that \eqref{Eq_kLambdan} and
\eqref{Eq_2kLambda0} for $n=1$ are the $a=k$ and $a=0$ instances
of \eqref{Eq_AG} respectively.
By \cite[Theorem 1.1]{GOW16}, \eqref{Eq_kLambdan} and
\eqref{Eq_2kLambda0} for $z=1$ are equivalent to
Conjecture~\ref{Con_A2n2-rep} for $k_0,\dots,k_n$ given by $0^n,k$
and $k,0^n$ respectively.

\medskip

The remainder of this paper is organised as follows.
In the next section we introduce terminology and notation
pertaining to affine Lie algebras and Hall--Littlewood symmetric functions
needed for this paper.
This section includes a proof, using the Bailey lemma, of an identity
of \cite{GOW16} for $P_{(2^r)}(1,q,q^2,\dots;q^N)$ for arbitrary
positive integers $N$ (see Proposition~\ref{Prop_GOW}) and a
conjectural Andrews--Gordon-type sum for 
$P_{2\la}(1,q,q^2,\dots;q^2)$ (see Conjecture~\ref{Con_Shun}).
In Section~\ref{Sec_Special} we establish a partial level-rank
duality for the CMPP conjecture and discuss some known or provable
cases of Conjectures~\ref{Con_A2n2}--\ref{Con_A2n2-qseries}.
We also derive a set of functional equations for the two-variable
generating function of CMPP partitions (see Proposition~\ref{Prop_fun}),
generalising the well-known Rogers--Selberg equations.
There are two important variants of the CMPP conjecture
for coloured partitions whose frequency arrays have an odd number
of rows.
One of these, related to the principal specialisation of
characters of $\mathrm{C}^{(1)}_n$-standard modules, is also due
to CMPP, while the other, due to Primc \cite{Primc24}, is related to a
non-standard specialisation of characters of standard modules for
$\mathrm{D}^{(2)}_{n+1}$.
Both these conjectures, which turn out to be closely related,
are the topic of Section~\ref{Sec_CD}.
For almost all of our results and conjectures for $\mathrm{A}^{(2)}_{2n}$,
with the exception of level-rank duality, we formulate analogues for
$\mathrm{C}^{(1)}_n$ and $\mathrm{D}^{(2)}_{n+1}$.
Using the above-mentioned Conjecture~\ref{Con_Shun} for Hall--Littlewood
symmetric functions, this also leads to a number of new conjectures
of Rogers--Ramanujan type, including
\[
\sum_{\substack{r_1,\dots,r_k\geq 0 \\[1pt] s_1,\dots,s_k\geq 0}}\,
\prod_{i=1}^k \frac{q^{(r_i+s_i)^2+s_i^2+s_i}}
{(q;q)_{r_i-r_{i+1}}(q^2;q^2)_{s_i-s_{i-1}}}
=\frac{(q,q^{k+1},q^{k+2};q^{k+2})_{\infty}}
{(q;q^2)_{\infty}(q;q)_{\infty}},
\]
which is a $q$-series identity for the principally specialised 
character of the $\mathrm{A}^{(1)}_1$-standard module $L(k\La_0)$
very different from those in
\cite{Andrews10,CSXY22,Feigin09,KY13,Stembridge90,Tsuchioka23},
and the very similar
\begin{align*}
\sum_{\substack{r_1,\dots,r_k\geq 0 \\[1pt] s_1,\dots,s_k\geq 0}} & \,
\prod_{i=1}^k \frac{q^{(r_i+s_i)^2+s_i^2}}
{(q;q)_{r_i-r_{i+1}}(q^2;q^2)_{s_i-s_{i-1}}} \\
&=\frac{(q^{k+1},q^{k+2},q^{k+2},q^{k+3},q^{2k+4},q^{2k+4};q^{2k+4})_{\infty}}
{(q^2;q^2)_{\infty}(q;q)_{\infty}},
\end{align*}
which is a $q$-series identity related to a non-standard specialisation
of the $\mathrm{D}^{(2)}_3$-standard module $L(k\La_1)$.
The close resemblance of these formulas to the Andrews--Gordon
identities \eqref{Eq_AG} seems quite remarkable.
In Section~\ref{Sec_completion} we state some preliminary results,
mostly conjectural, towards the seemingly very hard problem of completing
Conjecture~\ref{Con_A2n2-qseries} (and its analogues for
$\mathrm{C}^{(1)}_n$ and $\mathrm{D}^{(2)}_{n+1}$) to arbitrary
$k_0,\dots,k_n$.
Finally, in the appendix we provide the details of the proof of the
non-standard specialisation formula \eqref{Eq_A2n-product} and its
$\mathrm{D}^{(2)}_{n+1}$-analogue.

%%%%%%%%%%%%%%%%%%%%%%%%%%%%%%%%%%%%%%%%%%%%%%%%%%%%%%%%%%%%

\section{Preliminaries}

\subsection{Affine Lie algebras}\label{Sec_Lie}

In the following we fix the index set $I:=\{0,1,\dots,n\}$ for
$n$ a positive integer, and let $\gfrak=\gfrak(A)$ be an affine Lie 
algebra with generalised Cartan matrix $A=(a_{ij})_{i,j\in I}$
and Cartan subalgebra $\hfrak$, see e.g., \cite{Kac90}.
We choose bases $\{\alpha_0,\dots,\alpha_n,\La_0\}$ and
$\{\alpha_0^{\vee},\dots,\alpha_n^{\vee},d\}$ of $\hfrak^{\ast}$ and
$\hfrak$ respectively, such that 
\[
\ip{\alpha_i^{\vee}}{\alpha_j}=a_{ij},\quad
\ip{\alpha_i^{\vee}}{\La_0}=\ip{d}{\alpha_i}=\delta_{i,0},\quad
\ip{d}{\La_0}=0,
\]
where $\ip{\cdot}{\cdot}$ is the natural pairing between
$\hfrak$ and $\hfrak^{\ast}$.
In the following it will be assumed that $\gfrak$ is one of
the affine Lie algebras $\mathrm{A}^{(2)}_{2n}$, $\mathrm{C}^{(1)}_n$, 
$\mathrm{D}^{(2)}_{n+1}$, and that the labelling of the  
simple roots $\alpha_i$ is in accordance with the labelling of
the corresponding Dynkin diagrams as shown on page~\pageref{page_A2n}
or in the diagrams below: \label{page_CD}
\tikzmath{\y=10;}
\begin{center}
\begin{tikzpicture}[scale=0.6]
%C
\draw (-1.2,0) node {$\mathrm{C}^{(1)}_n$:};
\draw (0, 0.07)--(1,0.07);
\draw (0,-0.07)--(1,-0.07);
\draw (0.4,0.2)--(0.6,0)--(0.4,-0.2);
\draw (1,0)--(5,0);
\draw (5.6,0.2)--(5.4,0)--(5.6,-0.2);
\draw (5, 0.07)--(6, 0.07);
\draw (5,-0.07)--(6,-0.07);
\foreach \x in {0,...,6} \draw[fill=blue] (\x,0) circle (0.08cm);
\draw (0,-0.45) node {$\scriptstyle 0$};
\draw (1,-0.45) node {$\scriptstyle 1$};
\draw (6,-0.45) node {$\scriptstyle n$};
%D
\draw (\y-1.4,0) node {$\mathrm{D}^{(2)}_{n+1}$:};
\draw (\y, 0.07)--(\y+1, 0.07);
\draw (\y,-0.07)--(\y+1,-0.07);
\draw (\y+0.6,0.2)--(\y.4,0)--(\y+0.6,-0.2);
\draw (\y+1,0)--(\y+5,0);
\draw (\y+5.4,0.2)--(\y+5.6,0)--(\y+5.4,-0.2);
\draw (\y+5, 0.07)--(\y+6, 0.07);
\draw (\y+5,-0.07)--(\y+6,-0.07);
\draw[fill=red] (\y,0) circle (0.08cm);
\draw[fill=red] (\y+6,0) circle (0.08cm);
\foreach \z in {1,...,5} \draw[fill=blue] (\z+\y,0) circle (0.08cm);
\draw (\y,-0.45) node {$\scriptstyle 0$};
\draw (\y+1,-0.45) node {$\scriptstyle 1$};
\draw (\y+6,-0.45) node {$\scriptstyle n$};
\end{tikzpicture}
\end{center}

Denoting the set of simple roots of $\gfrak$ by $\Delta$, i.e.,
$\Delta=\{\alpha_i\}_{i\in I}$, we define the subset $\Delta^{\ast}$ of
marked simple roots by
\[
\Delta^{\ast}=\begin{cases}
\{\alpha_0\} & \text{for $\gfrak=\mathrm{A}^{(2)}_{2n}$}, \\
\emptyset & \text{for $\gfrak=\mathrm{C}^{(1)}_n$}, \\
\{\alpha_0,\alpha_n\} & \text{for $\gfrak=\mathrm{D}^{(2)}_{n+1}$}.
\end{cases}
\]
Hence the marked simple roots correspond to the vertices of the
Dynkin diagrams coloured red while the unmarked simple roots 
corresponds to the blue vertices.

The marks and comarks (or labels and colabels) $a_i$ and
$a_i^{\vee}$ for $i\in I$ are positive integers such that
$\sum_{i\in I} a_{ij}a_j=\sum_{i\in I} a_i^{\vee}a_{ij}=0$
and $\gcd(a_i)_{i\in I}=\gcd(a_i^{\vee})_{i\in I}=1$.
In particular, for the three cases of interest the comarks
are given by
\[
(a_0^{\vee},a_1^{\vee},\dots,a_{n-1}^{\vee},a_n^{\vee})=
\begin{cases}
(1,2,\dots,2,2) & \text{for $\gfrak=\mathrm{A}^{(2)}_{2n}$}, \\
(1,1,\dots,1,1) & \text{for $\gfrak=\mathrm{C}^{(1)}_n$}, \\
(1,2,\dots,2,1) & \text{for $\gfrak=\mathrm{D}^{(2)}_{n+1}$}.
\end{cases}
\]
The null or imaginary root $\delta$ is defined as
$\delta=\sum_{i\in I} a_i\alpha_i$ and together with the
fundamental weights $\La_i$ ($i\in I$), given by 
$\ip{\La_i}{\alpha_j^{\vee}}=\delta_{ij}$ and $\ip{\La_i}{d}=0$,
yields an alternative basis of $\hfrak^{\ast}$.
Deviating slightly from \cite{Kac90} by dropping the $\delta$-part,
we define the set of dominant integral weights of $\gfrak$ as
\[
P_{+}:=\sum_{i\in I} \mathbb{N}_0\La_i.
\]
The one-dimensional center of $\gfrak$ is spanned by the canonical
central element $K=\sum_{i\in I} a_i^{\vee}\alpha_i^{\vee}\in\hfrak$.
In terms of $K$, the level of $\La\in\hfrak^{\ast}$ is defined as
$\lev(\La):=\ip{\La}{K}$, and thus $\lev(\La_i)=a_i^{\vee}$.
For $k$ a nonnegative integer, we further let
$P_{+}^k:=\{\La\in P_{+}: \lev(\La)=k\}$ be the set of
level-$k$ dominant integral weight.
In our discussion of the CMPP and Primc conjectures, it will be 
convenient to define a set of scaled fundamental weights
$\{\Uplambda_i\}_{i\in I}$ as follows:
\[
(\Lap_0,\Lap_1,\dots,\Lap_{n-1},\Lap_n)=
\begin{cases}
(2\La_0,\La_1,\dots,\La_{n-1},\La_n)
& \text{for $\gfrak=\mathrm{A}^{(2)}_{2n}$}, \\
(\La_0,\La_1,\dots,\La_{n-1},\La_n)
& \text{for $\gfrak=\mathrm{C}^{(1)}_n$}, \\
(2\La_0,\La_1,\dots,\La_{n-1},2\La_n)
& \text{for $\gfrak=\mathrm{D}^{(2)}_{n+1}$}.
\end{cases}
\]
Note in particular that for all $i\in I$ we have
$\lev(\Lap_i)=2$ for $\gfrak=\mathrm{A}^{(2)}_{2n}$ or
$\gfrak=\mathrm{D}^{(2)}_{n+1}$ and $\lev(\Lap_i)=1$ for
$\gfrak=\mathrm{C}^{(1)}_n$, and that it is exactly the fundamental
weights corresponding to the marked vertices of the Dynkin diagram
that have been scaled by a factor of two in going from $\La_i$ to
$\Lap_i$.

The standard modules (also known as integrable highest weight modules)
of $\gfrak$ are indexed by dominant integral weights, and in the
following $L(\La)$ will denote the unique standard module of highest
weight $\La\in P_{+}$.
The character of $L(\Lambda)$ is defined as
\[
\ch L(\La)=\sum_{\mu\in\hfrak^{\ast}} \dim(V_{\mu}) \Exp{\mu},
\]
where $\Exp{\cdot}$ is a formal exponential and $\dim(V_{\mu})$ is
the dimension of the weight space $V_{\mu}$ in the weight-space 
decomposition of $L(\La)$.
As in the introduction, we define the normalised character 
$\chi_{\La}:=\Exp{-\La}\ch L(\La)$, so that
\[
\chi_{\La}\in\mathbb{Z}[[\Exp{-\alpha_0},\dots,\Exp{-\alpha_n}]].
\]
According to the Weyl--Kac character formula \cite{Kac90},
\begin{equation}\label{Eq_Weyl-Kac}
\chi_{\La}=
\frac{\sum_{w\in W}\sgn(w) \Exp{w(\La+\rho)-\La-\rho}}
{\prod_{\alpha>0}(1-\Exp{-\alpha})^{\mult(\alpha)}},
\end{equation}
where $W$ is the Weyl group of $\gfrak$, $\sgn(w)$ the signature of
$w\in W$ and $\rho=\sum_{i\in I}\La_i$ is the Weyl vector.
The product over $\alpha>0$ is a product over the
positive roots of the root system of $\gfrak$ and
$\mult(\alpha)$ is the dimension of the root space
corresponding to $\alpha$.
Rather than the full characters of $\gfrak$ we require specialisations.
In all three cases under consideration we use the same notation for this
specialisation, despite its $\gfrak$-dependence.
Following \cite{GOW16}, we define
$\varphi_n:\mathbb{Z}[[\Exp{-\alpha_0},\dots,\Exp{-\alpha_n}]]\to
\mathbb{Z}[[q]]$ by
\begin{equation}\label{Eq_NPS}
\varphi_n(\Exp{-\alpha})=\begin{cases}
-1 & \text{if $\alpha\in\Delta^{\ast}$}, \\
q & \text{if $\alpha\in\Delta\setminus\Delta^{\ast}$}.
\end{cases}
\end{equation}
For $\gfrak=\mathrm{C}^{(1)}_n$, since there are no marked simple roots,
this is the well-known principal specialisation, but for the other two
types the above specialisation is non-standard.
From Lepowsky's numerator formula \cite{Lepowsky82} for
$\mathrm{C}^{(1)}_n$ (see also \cite{BKMP24}) and results from \cite{GOW16}
for $\mathrm{A}^{(2)}_{2n}$ and $\mathrm{D}^{(2)}_{n+1}$, it follows that
in all three cases the $\varphi_n$-specialisation of $\chi_{\la}$ admits
a product form.
For $\gfrak=\mathrm{A}^{(2)}_{2n}$ and $\La\in P_{+}^{2k}$ parametrised
as in \eqref{Eq_La-para}, i.e., as
\begin{equation}\label{Eq_La-para2}
\La=(k-\la_1)\Lap_0+(\la_1-\la_2)\Lap_1+\dots+
(\la_{n-1}-\la_n)\Lap_{n-1}+\la_n\Lap_n,
\end{equation}
the specialisation $\varphi_n(\chi_{\La})$ is given by
\eqref{Eq_A2n-product}.
Similarly, for $\gfrak=\mathrm{C}^{(1)}_n$ and $\La\in P_{+}^k$ 
parametrised as in \eqref{Eq_La-para2} we have
\begin{align}\label{Eq_Cn-PS}
&\varphi_n(\chi_{\La}) \\
&\;\:=\frac{(q^{k+n+1};q^{2k+2n+2})_{\infty}
(q^{2k+2n+2};q^{2k+2n+2})_{\infty}^n}{(q;q^2)_{\infty}(q;q)_{\infty}^n} 
\prod_{i=1}^n \theta\big(q^{\la_i+n-i+1};q^{k+n+1}\big) \notag \\
&\qquad\times \prod_{1\leq i<j\leq n} 
\theta\big(q^{\la_i-\la_j-i+j},q^{\la_i+\la_j+2n-i-j+2};q^{2k+2n+2}\big).
\notag
\end{align}
Finally, for $\gfrak=\mathrm{D}^{(2)}_{n+1}$,
\begin{align}\label{Eq_D2n-product}
&\varphi_n(\chi_{\La}) \\
&\;\:=\frac{(q^{2k+2n};q^{2k+2n})_{\infty}^n}
{(q^2;q^2)_{\infty}(q;q)_{\infty}^{n-1}}  
\prod_{1\leq i<j\leq n} 
\theta\big(q^{\la_i-\la_j-i+j},q^{\la_i+\la_j+2n-i-j+1};q^{2k+2n}\big),
\notag
\end{align}
where again $\La$ is given by \eqref{Eq_La-para2}.
Since the proofs of the two non-principal product forms were not included
in paper \cite{GOW16}, we present the full details of their derivation in
the appendix.

\subsection{Hall--Littlewood polynomials}\label{Sec_HL}

The Hall--Littlewood polynomials $P_{\la}(t)$ are an important family of
symmetric functions, interpolating between the Schur functions $s_{\la}$ 
and monomial symmetric functions $m_{\la}$.
They have long been known to be related to characters of affine Lie
algebras and identities of the Rogers--Ramanujan type,
see \cite{BW15,Fulman00,GOW16,HJ23,IJZ06,JZ05,Kirillov00,RW21,Stembridge90,W06a,W06b,WZ12}.

For $\la$ a partition of length at most $k$, the Hall--Littlewood
polynomial is defined as~\cite{Macdonald95}
\[
P_{\la}(t)=P_{\la}(x_1,\dots,x_k;t):=\sum_{w\in S_k/S_k^{\la}}
w\bigg( x_1^{\la_1}\cdots x_k^{\la_k} 
\prod_{\la_i>\la_j} \frac{x_i-tx_j}{x_i-x_j}\bigg),
\]
where $S_k$ is the symmetric group of degree $k$.  
$P_{\la}(x_1,\dots,x_k;t)$ is symmetric in the $x_i$ (with coefficients
in $\mathbb{Z}[t]$) and homogeneous of degree $\abs{\la}$.
By the stability property
\[
P_{\la}(x_1,\dots,x_k,0;t)=
\begin{cases}
P_{\la}(x_1,\dots,x_k;t) & \text{if $l(\la)\leq k$}, \\
0 & \text{if $l(\la)=k+1$},
\end{cases}
\]
the Hall--Littlewood polynomials extend to symmetric functions in 
infinitely many variables in the usual fashion, see~\cite{Macdonald95}
for details.
Well-known special cases of $P_{\la}(t)$ are $P_{\la}(0)=s_{\la}$,
$P_{\la}(1)=m_{\la}$ and $P_{(1^r)}=e_r$, with $e_r$ the $r$th elementary
symmetric function.

An important result for Hall--Littlewood polynomials is the principal
specialisation formula~\cite[page 213]{Macdonald95}
\begin{equation}\label{Eq_P-spec}
P_{\la}(1,t,\dots,t^{k-1};t)=\frac{t^{n(\la)}(t;t)_k}
{(t;t)_{k-l(\la)} \prod_{i\geq 1}(t;t)_{\la'_i-\la'_{i+1}}}
=\frac{t^{n(\la)}(t;t)_k}{\prod_{i\geq 0}(t;t)_{f_i(\la)}},
\end{equation}
where $l(\la)\leq k$, $n(\la):=\sum_{i\geq 1} (i-1)\la_i$ and
$f_0(\la):=k-l(\la)$.
In order to prove Conjecture~\ref{Con_A2n2-qseries} for $k=1$, we
require the following generalisation of the large-$k$ limit of
\eqref{Eq_P-spec} for $\la=(2^r)$, which was stated without proof
in~\cite[Equation (2.7)]{GOW16}.

\begin{proposition}\label{Prop_GOW}
For $r,n$ nonnegative integers and $\delta\in\{0,1\}$, 
\begin{align}\label{Eq_GOW27}
&P_{(2^r)}\big(1,q,q^2,\dots;q^{2n+\delta}\big) \\
&\quad=\sum_{r_1,\dots,r_n\geq 0}
\frac{q^{r^2-r+r_1^2+\dots+r_n^2+r_1+\dots+r_n}}
{(q;q)_{r-r_1}(q;q)_{r_1-r_2}
\cdots(q;q)_{r_{n-1}-r_n}(q^{2-\delta};q^{2-\delta})_{r_n}}.\notag
\end{align}
\end{proposition}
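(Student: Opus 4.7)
I would prove the proposition by induction on $n$, using the Bailey lemma to supply the inductive step. For the base case $n = 0$, the right-hand side collapses to $q^{r^2-r}/(q^{2-\delta};q^{2-\delta})_r$. When $\delta = 1$, this is the $k \to \infty$ limit of the principal specialisation formula \eqref{Eq_P-spec} applied with $\lambda = (2^r)$ and $t = q$ (using $n((2^r)) = r^2-r$ and $f_2((2^r)) = r$); and when $\delta = 0$ the parameter $t$ becomes $1$, so $P_{(2^r)}(x;1) = m_{(2^r)}(x) = e_r(x_1^2, x_2^2, \dots)$, and the generating-function identity $\prod_{i\geq 0}(1 + z q^{2i}) = \sum_n z^n q^{n(n-1)}/(q^2;q^2)_n$ yields the desired value.

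\textbf{Reduction to a one-step recursion.} Isolating the outermost index $r_1$ on the right-hand side of \eqref{Eq_GOW27} and applying the inductive hypothesis to the inner $(n-1)$-fold sum reduces the claim to
\[
P_{(2^r)}\bigl(1, q, q^2, \dots; q^{N+2}\bigr) = q^{r^2-r}\sum_{r_1=0}^{r}\frac{q^{2r_1}}{(q;q)_{r-r_1}}\,P_{(2^{r_1})}\bigl(1, q, q^2, \dots; q^{N}\bigr),
\]
valid for every integer $N \geq \delta$: the factor $q^{r_1 - r_1^2}$ issued by the inductive hypothesis combines with the explicit $q^{2r_1}$ to generate the $q^{r_1^2 + r_1}$ of the next Bailey-chain layer. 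This recursion is exactly what the $\rho_1, \rho_2 \to \infty$ limit of the Bailey lemma relative to $a = q$ produces: in that limit a Bailey pair $(\alpha_r, \beta_r)$ transforms to $(q^{r^2+r}\alpha_r,\, \sum_{s} q^{s^2+s}\beta_s/(q;q)_{r-s})$, so writing $\beta_r^{(N)} := q^{r-r^2}P_{(2^r)}(1, q, q^2, \dots; q^N)$ the recursion is the Bailey-chain transform $\beta_r^{(N+2)} = \sum_s q^{s^2+s}\beta_s^{(N)}/(q;q)_{r-s}$.

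\textbf{Main obstacle.} It therefore suffices to show that $(\alpha_r^{(N)}, \beta_r^{(N)})$ is a Bailey pair relative to $a = q$ for some sequence $\alpha_r^{(N)}$ satisfying $\alpha_r^{(N+2)} = q^{r^2+r}\alpha_r^{(N)}$; once the base pair for $N = \delta$ is produced, Bailey iteration supplies $\alpha_r^{(N)}$ for all relevant $N$ automatically. Identifying this base pair is the technical heart of the proof. My intended route is to start from Macdonald's finite-variable coset formula
\[
P_{(2^r)}(x_1, \dots, x_k; t) = \sum_{\substack{I \subseteq \{1, \dots, k\} \\[1pt] |I| = r}}\prod_{i \in I} x_i^2 \prod_{\substack{i \in I \\ j \notin I}}\frac{x_i - t x_j}{x_i - x_j},
\]
specialise $x_i = q^{i-1}$ and $t = q^\delta$, let $k \to \infty$, and massage the resulting multisum via the $q$-binomial theorem and a $q$-Vandermonde summation into the Bailey-pair form $\beta_r^{(\delta)} = \sum_s \alpha_s^{(\delta)}/[(q;q)_{r-s}(q^2;q)_{r+s}]$, thereby both producing $\alpha_s^{(\delta)}$ and verifying the Bailey-pair relation in one stroke. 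Extracting and recognising $\alpha_s^{(\delta)}$ in closed form is the step where the real $q$-series work lies; after that, the induction closes automatically.
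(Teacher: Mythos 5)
Your base case is correct, and your reduction of \eqref{Eq_GOW27} to the one-step recursion
\[
P_{(2^r)}\big(1,q,q^2,\dots;q^{N+2}\big)
=q^{r^2-r}\sum_{s=0}^{r}\frac{q^{2s}}{(q;q)_{r-s}}\,
q^{s-s^2}P_{(2^s)}\big(1,q,q^2,\dots;q^{N}\big)
\]
is a faithful restatement of what needs to be proved. The gap is in the claim that ``once the base pair for $N=\delta$ is produced, Bailey iteration supplies $\alpha_r^{(N)}$ for all relevant $N$ automatically.'' Iterating the Bailey chain on the base pair produces a Bailey pair whose $\beta$-sequence is the iterated multisum, i.e.\ $q^{r-r^2}$ times the right-hand side of \eqref{Eq_GOW27}; it tells you nothing about whether that multisum coincides with $q^{r-r^2}P_{(2^r)}(1,q,q^2,\dots;q^{N})$ for $N>\delta$. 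To invoke uniqueness of $\beta$ given $\alpha$ you must independently establish that the Hall--Littlewood specialisation at base $q^{N}$ is the $\beta$-sequence attached to $\alpha^{(N)}_r=q^{n(r^2+r)}\alpha^{(\delta)}_r$ --- but that statement \emph{is} the proposition for level $n$. Your plan only ever computes with the Macdonald coset formula at $t=q^{\delta}$, so the inductive step is circular: you have reduced the claim to the one-step recursion and then asserted the recursion as a consequence of machinery that presupposes it. (A secondary point: the required recursion $\alpha^{(N+2)}_r=q^{r^2+r}\alpha^{(N)}_r$ relative to $a=q$ does turn out to be true, but only because the proposition is true; it is not something the standard chain hands you from the base pair alone.)

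The fix is to get a handle on $P_{(2^r)}(1,q,q^2,\dots;t)$ for \emph{general} $t$, which is exactly what the paper does. It specialises the Lassalle--Schlosser formula \cite{LS06} for $P_{(2^r,1^s)}(t)$, takes $k\to\infty$, and reads off that \eqref{Eq_alpha-beta} is a Bailey pair relative to $q^s$ with $\alpha_i$ given in closed form for arbitrary $t$. Setting $s=0$ and $t=q^{2n+\delta}$, this $\alpha$-sequence is recognised as that of a known (higher-level, relative to $a=1$) Bailey pair of Schilling--Warnaar \cite{SW98} whose $\beta$-sequence is precisely the $n$-fold multisum, and uniqueness of $\beta$ given $\alpha$ in \eqref{Eq_Bailey-pair} finishes the proof in one stroke --- no induction on $n$ and no chain iteration. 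If you want to salvage your inductive scheme instead, you would need to prove the displayed one-step recursion directly as an identity of Hall--Littlewood specialisations (it is in essence a special case of \cite[Lemma 2.1]{GOW16}), which again requires working at general base $t$ rather than only at $t=q^{\delta}$.
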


\begin{proof}
For integers $k,n$, let 
\[
\qbin{n}{k}=\qbin{n}{k}_q=
\begin{cases}
\displaystyle
\frac{(q;q)_n}{(q;q)_k(q;q)_{n-k}} & \text{if $0\leq k\leq n$}, \\[3mm]
0 & \text{otherwise},
\end{cases}
\]
be a $q$-binomial coefficient.
Our starting point for proving \eqref{Eq_GOW27} is a formula for the 
Hall--Littlewood polynomials due to Lassalle and Schlosser stated 
in \cite[Theorem 7.1]{LS06}.
For $\la=(2^r,1^s)$ (the partition with $r$ parts of size $2$ and $s$ parts
of size $1$) the Lassalle--Schlosser result simplifies to
\[
P_{(2^r,1^s)}(t)=
\sum_{i=0}^r (-1)^i t^{\binom{i}{2}} \frac{1-t^{2i+s}}{1-t^{i+s}}
\qbin{i+s}{s}_t e_{r-i} e_{r+s+i},
\]
where $(1-t^{2i+s})/(1-t^{i+s})$ should be interpreted as $1$ if 
$i=0$ for all $s\geq 0$ (i.e., including when $s=0$).
By \cite[page 27]{Macdonald95}
\[
e_r(1,q,\dots,q^{k-1})=q^{\binom{r}{2}}\qbin{k}{r},
\]
(which is \eqref{Eq_P-spec} for $\la=(1^r)$), this yields
\begin{align}\label{Eq_LS-specialised}
&P_{(2^r,1^s)}(1,q,\dots,q^{k-1};t) \\
&\quad=\sum_{i=0}^r (-1)^i
t^{\binom{i}{2}}q^{\binom{r-i}{2}+\binom{r+s+i}{2}}
\frac{1-t^{2i+s}}{1-t^{i+s}} \qbin{i+s}{s}_t
\qbin{k}{r-i}\qbin{k}{r+s+i}. \notag
\end{align}
We note that for $t=q$ the sum over $i$ can be carried out by the
very-well poised $_6\phi_5$ summation \cite[Equation (II.21)]{GR04}
with $(a,b,c,n)\mapsto (q^s,q^{-(k-r-s)},\infty,r)$ to give
\begin{equation}\label{Eq_6phi5}
P_{(2^r,1^s)}(1,q,\dots,q^{k-1};q)=
\frac{q^{\binom{r}{2}+\binom{r+s}{2}}(q;q)_k}
{(q;q)_{k-r-s}(q;q)_r(q;q)_s},
\end{equation}
in accordance with \eqref{Eq_P-spec}.

A pair of sequences
$(\alpha,\beta)=\big((\alpha_r)_{r\geq 0},(\beta_r)_{r\geq 0}\big)$
such that
\begin{equation}\label{Eq_Bailey-pair}
\beta_r=\sum_{i=0}^r \frac{\alpha_i}{(q;q)_{r-i}(aq;q)_{r+i}}
\end{equation}
is known as a Bailey pair relative to $a$,
see \cite{Bailey49,Andrews84,W01}.
The identity that arises after taking the large-$k$ limit of 
\eqref{Eq_LS-specialised} is equivalent to the statement that
\begin{subequations}\label{Eq_alpha-beta}
\begin{align}\label{Eq_alpha}
\alpha_i&=(-1)^i t^{\binom{i}{2}} q^{i(i+s)}
\frac{1-t^{2i+s}}{1-t^{i+s}} \qbin{i+s}{s}_t, \\
\beta_r&=q^{-\binom{r}{2}-\binom{r+s}{2}}
(q;q)_s \, P_{(2^r,1^s)}(1,q,q^2,\dots;t)
\label{Eq_beta}
\end{align}
\end{subequations}
forms a Bailey pair relative to $q^s$.
Specialising $s=0$ and $t=q^{2n+\delta}$ for $n$ a nonnegative
integer and $\delta\in\{0,1\}$, the resulting $\alpha$-sequence
corresponds to a known Bailey pair relative to $1$ (given by 
\cite[Equation (4.13)]{SW98} with $(k,i)\mapsto (n+2,2)$), 
with corresponding $\beta$-sequence
\[
\beta_r=\sum_{r_1,\dots,r_n\geq 0}
\frac{q^{r_1^2+\dots+r_n^2+r_1+\dots+r_n}}
{(q)_{r-r_1}(q)_{r_1-r_2}
\cdots(q)_{r_{n-1}-r_n}(q^{2-\delta};q^{2-\delta})_{r_n}}.
\]
Since the $\alpha$-sequence determines the $\beta$-sequence uniquely,
the above expression may be equated with \eqref{Eq_beta} for $s=0$ and
$t=q^{2n+\delta}$, completing the proof.

We remark that, by \eqref{Eq_6phi5} for $k\to\infty$, the
Bailey pair \eqref{Eq_alpha-beta} for $s=1$ and $t=q$ is equivalent
to the Bailey pair B(3) in Slater's famous list of Bailey
pairs~\cite{Slater51}.
Unfortunately, in the general $t=q^{2n+\delta}$ case the 
$\alpha$-sequence \eqref{Eq_alpha} for $s=1$ does not correspond to 
anything known.
\end{proof}

From \eqref{Eq_PS-2lambda} (which is a special case of \eqref{Eq_P-spec}),
\[
\sum_{\substack{\la \\[1pt] \la_1\leq k}}
(zq)^{\abs{\la}} P_{2\la}(1,q,q^2,\dots;q)=
\sum_{r_1,\dots,r_k\geq 0}\,
\prod_{i=1}^k \frac{z^{r_i} q^{r_i^2}}{(q;q)_{r_i-r_{i+1}}}.
\]
Conjecturally, this generalises to Hall--Littlewood functions of base $q^2$.

\begin{conjecture}\label{Con_Shun}
For $k$ a nonnegative integer,
\begin{equation}\label{Eq_q2}
\sum_{\substack{\la \\[1pt] \la_1\leq k}}
(zq)^{\abs{\la}} P_{2\la}(1,q,q^2,\dots;q^2)
=\sum_{\substack{r_1,\dots,r_k\geq 0 \\[1pt] s_1,\dots,s_k\geq 0}}
\, \prod_{i=1}^k \frac{z^{r_i+s_i} q^{(r_i+s_i)^2+s_i^2+s_i}}
{(q;q)_{r_i-r_{i+1}}(q^2;q^2)_{s_i-s_{i-1}}},
\end{equation}
where $r_{k+1}=s_0:=0$.
\end{conjecture}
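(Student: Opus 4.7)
The plan is induction on $k$, with Proposition~\ref{Prop_GOW} supplying the base case and an iterated Bailey-lemma argument driving the inductive step.

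For the base case $k=1$, I would observe that $\la_1 \leq 1$ forces $\la=(1^r)$, so $2\la=(2^r)$, and the left-hand side reduces to $\sum_{r\geq 0}(zq)^r P_{(2^r)}(1,q,q^2,\dots;q^2)$. Inserting the $n=1$, $\delta=0$ case of Proposition~\ref{Prop_GOW}, exchanging the order of summation, and substituting $(r,r_1)\mapsto(r_1+s_1,s_1)$ should produce the $k=1$ right-hand side, since the exponent $r+(r^2-r)+r_1^2+r_1$ rewrites as $(r_1+s_1)^2+s_1^2+s_1$ after the reindexing.

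For the inductive step, my plan is to read the right-hand side as a \emph{doubly} iterated Bailey chain: one chain of length $k$ in base $q$, indexed by the weakly decreasing sequence $r_1\geq\dots\geq r_k\geq r_{k+1}=0$, and one chain of length $k$ in base $q^2$, indexed by the weakly increasing sequence $0=s_0\leq s_1\leq\dots\leq s_k$, entangled through the cross-term $q^{2r_is_i}$ hidden inside $q^{(r_i+s_i)^2}$. Bailey's lemma with $\rho_1,\rho_2\to\infty$ sends a pair $(\alpha_n,\beta_n)$ relative to $1$ in base $q$ to the pair $\big(q^{n^2}\alpha_n,\,\sum_{j\geq 0}q^{j^2}\beta_j/(q;q)_{n-j}\big)$, and there is a direct $q\mapsto q^2$ analogue. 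Alternating between the two types of step, starting from an appropriate seed pair, should reproduce the multisum on the right.

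The principal obstacle---and the reason the identity is left as a conjecture---is to identify the correct seed so that the $(zq)^{\abs{\la}}P_{2\la}(\cdot;q^2)$ sum on the Hall--Littlewood side is recovered for arbitrary $\la_1\leq k$. The Bailey pair uncovered in the proof of Proposition~\ref{Prop_GOW}, specialised at $t=q^2$, detects only rectangles $(2^r)$, and its bivariate form~\eqref{Eq_alpha-beta} reaches only the hook-like shapes $(2^r,1^s)$. To encompass all shapes $2\la$ one would need either a richer Bailey pair whose $\beta$-sequence directly encodes $\sum_{\la_1\leq k}(zq)^{\abs{\la}}P_{2\la}$, or a Hall--Littlewood Pieri-type recursion writing $P_{2\la}(x;q^2)$ as a sum over $P_{2\mu}(x;q^2)$ with $\mu_1\leq\la_1-1$, whose telescoping would match the outermost peel of the Bailey chain on the right. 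Establishing either ingredient is the main technical hurdle, and it is where the argument presently stalls.
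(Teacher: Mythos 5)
The statement you are attempting is labelled a \emph{conjecture} in the paper, and the paper offers no proof of it: the authors only remark that it holds trivially for $k=0$ and that the $k=1$ case follows from Proposition~\ref{Prop_GOW} with $n=1$, $\delta=0$ and the reindexing $(r,r_1)\mapsto(r_1+s_1,s_1)$. Your base-case argument is exactly this verification, and your exponent bookkeeping ($r+(r^2-r)+r_1^2+r_1=(r_1+s_1)^2+s_1^2+s_1$ under the substitution, with the denominators $(q;q)_{r-r_1}(q^2;q^2)_{r_1}$ becoming $(q;q)_{r_1}(q^2;q^2)_{s_1}$) is correct. So for $k\leq 1$ you have reproduced everything the paper establishes.

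For the inductive step there is a genuine gap, which you yourself identify accurately: the Bailey-chain machinery available (the pair \eqref{Eq_alpha-beta} and its specialisations) only produces the generating function over rectangles $(2^r)$, or at best hooks $(2^r,1^s)$, whereas the left-hand side of \eqref{Eq_q2} sums $P_{2\la}$ over \emph{all} partitions $\la$ with $\la_1\leq k$; no seed or Pieri-type recursion is currently known that peels off the outer layer of a general $2\la$ in a way matching the outermost factor of the double multisum. Since this missing ingredient is precisely why the identity is stated as a conjecture, your proposal does not constitute a proof, but it is an honest and correctly diagnosed account of where the difficulty lies, and it is consistent with (indeed slightly more detailed than) what the paper itself says about the status of the statement.
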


Apart from the trivial case $k=0$ this also holds for $k=1$ by
\eqref{Eq_GOW27} for $n=1$, $\delta=0$ and $(r,r_1)\mapsto (r_1+s_1,s_1)$.

%%%%%%%%%%%%%%%%%%%%%%%%%%%%%%%%%%%%%%%%%%%%%%%%%%%%%%%%%%%%

\section{Special cases of the CMPP conjecture}\label{Sec_Special}

There is a `rough' level-rank duality at play in
Conjecture~\ref{Con_A2n2}, corresponding to the interchange of $k$
and $n$.
It is not the case, however, that for every identity corresponding to
a level-$2k$ standard module of $\mathrm{A}^{(2)}_{2n}$ there is a
counterpart in terms of a level-$2n$ standard module of
$\mathrm{A}^{(2)}_{2k}$, and a one-to-one correspondence only occurs
for $n=1$ and $n=2$ (or, by level-rank duality, for $k=1$ and $k=2$).
For $n=1$, let $i,k$ be integers such that $0\leq i\leq k$.
Then
\begin{equation}\label{Eq_level-rank-n=1}
\varphi_1\big(\chi_{(k-i)\Lap_0+i\Lap_1}\big)=
\begin{cases}
\varphi_k\big(\chi_{\Lap_{i/2}}\big) & \text{if $i$ is even}, \\[2mm]
\varphi_k\big(\chi_{\Lap_{k-(i-1)/2}}\big) & \text{if $i$ is odd}.
\end{cases}
\end{equation}
Similarly, for $n=2$, let $i,j,k$ be integers such that
$0\leq i\leq j\leq k$.
Then
\begin{equation}\label{Eq_level-rank-n=2}
\varphi_2\big(\chi_{(k-j)\Lap_0+(j-i)\Lap_1+i\Lap_2}\big)
 = \begin{cases}
\varphi_k\big(\chi_{\Lap_{(j-i)/2}+\Lap_{(i+j)/2}}\big) 
& \text{if $i+j$ is even}, \\[2mm]
\varphi_k(\chi_{\Lap_{k-(i+j-1)/2}+\Lap_{k-(j-i-1)/2}}\big)
& \text{if $i+j$ is odd}.
\end{cases}
\end{equation}
As mentioned above, for general $n$ and $k$ there only is a partial
level-rank duality, depending in a non-trivial manner on the choice
of highest weight.
There are, however, some general patterns, the simplest of which are
\begin{align*}
\varphi_n\big(\chi_{k\Lap_0}\big)&=\varphi_k\big(\chi_{n\Lap_0}\big),
&& \hspace{-15mm} k,n\geq 1, \\
\varphi_n\big(\chi_{(k-2)\Lap_0+2\Lap_1}\big)
&=\varphi_k\big(\chi_{(n-2)\Lap_0+2\Lap_1}\big),
&& \hspace{-15mm} k,n\geq 2, \\
\varphi_n\big(\chi_{(k-3)\Lap_0+2\Lap_1+\Lap_2}\big)
&=\varphi_k\big(\chi_{(n-3)\Lap_0+2\Lap_1+\Lap_2}\big),
&& \hspace{-15mm} k,n\geq 3,
\end{align*}
where, in accordance with \eqref{Eq_level-rank-n=1} and
\eqref{Eq_level-rank-n=2}, the second duality holds for all $k,n\geq 1$ 
and the third duality for all $k,n\geq 2$ provided $-\Lap_0+2\Lap_1$ is
interpreted as $\Lap_1$.
In terms of Conjectures~\ref{Con_A2n2} and \ref{Con_A2n2-rep},
the duality \eqref{Eq_level-rank-n=1} may be expressed as follows.
For $a\in\{0,\dots,n\}$, let
\[
\A_a^{(n)}(N):=\A_{0^a,1,0^{n-a}}(N)
\quad\text{and}\quad
\A_a^{(n)}:=\bigcup_{N\geq 0} \A_a^{(n)}(N).
\]
Then \eqref{Eq_CMPP2} for $k_0+\dots+k_n=1$ gives
\[
\sum_{\la\in\A_a^{(n)}} q^{\abs{\la}}=
\varphi_n\big(\chi_{\Lap_a}\big).
\]
By \eqref{Eq_level-rank-n=1} with $k\mapsto n$ this yields
\[
\sum_{\la\in\A_a^{(n)}} q^{\abs{\la}} =
\begin{cases}
\varphi_1\big(\chi_{(n-2a)\Lap_0+2a\Lap_1}\big) &
\text{for $0\leq a\leq\floor{n/2}$}, \\[2mm]
\varphi_1\big(\chi_{(2a-n-1)\Lap_0+(2n-2a+1)\Lap_1}\big) &
\text{for $\floor{n/2}<a\leq n$}.
\end{cases}
\]
Finally, by \eqref{Eq_A2n-product} with $n=1$, $\kappa=2n+3$ and
$(\la_0,\la_1)=(2n-2a,2a)$ in the first case and
$(\la_0,\la_1)=(2a-1,2n-2a+1)$ in the second case, we find
\begin{equation}\label{Eq_JMS}
\sum_{\la\in\A_a^{(n)}} q^{\abs{\la}} =
\frac{(q^{2a+1},q^{2n-2a+2},q^{2n+3};q^{2n+3})_{\infty}}{(q;q)_{\infty}}.
\end{equation}
With some effort this may be seen to correspond to a coloured
partition theorem due to Jing, Misra and Savage, stated as
\cite[Theorem 1.2; $M$ odd]{JMS01}.
The exact correspondence between $\A_a^{(n)}(N)$ and
the set of coloured partitions $C_N(M,r)$ defined by Jing, Misra and
Savage is as follows:
\[
M=2n+3,\quad 
r=\begin{cases} 2a+1 & \text{for $0\leq a\leq \floor{n/2}$}, \\
2n-2a+2 & \text{for $\floor{n/2}<a\leq n$},
\end{cases}
\]
where in the second case, the colour labelling of \cite{JMS01} needs to be
reversed, i.e., the colour $c$ should be mapped to $n+1-c$.
(This also affects the actual definition of coloured partitions in
\cite{JMS01} since an order on the colour labels is assumed.)
Jing, Misra and Savage prove their result by showing that
the set of coloured partition $C_N(2n+3,r)$ is in bijection with the
set of ordinary partitions $\la\vdash N$ such that
$\la_i-\la'_i\in\{2-r,\dots,2n-r+1\}$ for all $1\leq i\leq d$ where
$d=\max\{i\geq 1: \la_i\geq i\}$ is the size of the Durfee square of $\la$.
By the work of Andrews on successive rank partitions 
\cite[Theorem 4.1]{Andrews72}, this gives the claimed product form.
An alternative proof of \eqref{Eq_JMS} was given in \cite{Russell23},
based on the fact that if
\[
A^{(n)}_a(z)=A^{(n)}_a(z,q):=
\sum_{\la\in\A_a^{(n)}} z^{l(\la)}q^{\abs{\la}},
\]
then the following system of functional equations holds:
\begin{subequations}\label{Eq_MR}
\begin{align}
\label{Eq_MR1}
A^{(n)}_a(z)-A^{(n)}_{n-a}(zq)
&=\sum_{i=1}^a zq^{2i-1} A^{(n)}_{a-i+1}(zq^{2i})
+\sum_{i=1}^a zq^{2i} A^{(n)}_{n-a+i}(zq^{2i+1}), \\
A^{(n)}_{n-a}(z)-A^{(n)}_{a+1}(zq)
&=\sum_{i=1}^{a+1} zq^{2i-1} A^{(n)}_{n-a+i-1}(zq^{2i})
+\sum_{i=1}^a zq^{2i} A^{(n)}_{a-i+1}(zq^{2i+1}),
\label{Eq_MR2}
\end{align}
where $0\leq a\leq\floor{n/2}$ in \eqref{Eq_MR1} and
$0\leq a\leq\floor{(n-1)/2}$ in \eqref{Eq_MR2}.
\end{subequations}
Since these are equivalent to the Corteel--Welsh equations \cite{CW19}
for cylindric partitions of rank $2$ and level $2n+1$, we have the
known solution~\cite{Russell23,W23}\footnote{To be precise,
$A_a^{(n)}(z,q)$ corresponds to the normalised generating function
for cylindric partitions with profile $(2n-a+1,a)$, denoted by
$G_{(2n-a+1,a)}(z,q)$ in \cite{CW19}.
The Corteel--Welsh functional equations for the function $G_c(z,q)$, 
for more general profiles $c$ is given by \cite[Equation (3.5)]{CW19}.}
\begin{equation}\label{Eq_A-F}
A^{(n)}_a(z,q)=\begin{cases}
F^{(n)}_{2a,1}(z,q) & \text{for $0\leq a\leq\floor{n/2}$}, \\[2mm]
F^{(n)}_{2n-2a+1,1}(z,q) & \text{for $\floor{n/2}<a\leq n$}, \\
\end{cases}
\end{equation}
where, for $0\leq a\leq n$ and $\delta\in\{0,1\}$,
\begin{equation}\label{Eq_Fdef}
F^{(n)}_{a,\delta}(z,q):=\sum_{r_1,\dots,r_n\geq 0}
\frac{z^{r_1} q^{r_1^2+\dots+r_n^2+r_{a+1}+\dots+r_n}}
{(q;q)_{r_1-r_2}\cdots(q;q)_{r_{n-1}-r_n}
(q^{2-\delta};q^{2-\delta})_{r_n}}.
\end{equation}
Since for $z=1$ and $\delta=1$ this is exactly the left-hand side of
\eqref{Eq_AG2} with $k\mapsto n$, this once again
implies \eqref{Eq_JMS}.

Also Conjecture~\ref{Con_A2n2-qseries} can be shown to hold for $k=1$.

\begin{proposition}
Conjecture~\ref{Con_A2n2-qseries} holds for $k=1$ and all positive
integers $n$.
That is,
\begin{subequations}
\begin{align}\label{Eq_Ann}
A^{(n)}_n(z,q)=\sum_{\la\in\A_n^{(n)}}
z^{l(\la)} q^{\abs{\la}} &= 
\sum_{r=0}^{\infty} (zq)^r P_{(2^r)}\big(1,q,q^2,\dots;q^{2n-1}\big) 
\intertext{and}
A^{(n)}_0(z,q)=\sum_{\la\in\A_0^{(n)}}
z^{l(\la)} q^{\abs{\la}} &= 
\sum_{r=0}^{\infty} (zq^2)^r P_{(2^r)}\big(1,q,q^2,\dots;q^{2n-1}\big).
\end{align}
\end{subequations}
\end{proposition}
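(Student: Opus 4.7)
The plan is to simply combine two ingredients that have already been laid out in the paper: the explicit hypergeometric formula \eqref{Eq_A-F} for $A^{(n)}_a(z,q)$ that comes from solving the Rogers--Selberg--type functional equations \eqref{Eq_MR}, and the Hall--Littlewood expansion in Proposition~\ref{Prop_GOW}. No new analytical input should be needed; the proof is a matter of matching two multisums.

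First I would invoke \eqref{Eq_A-F} at the two extreme indices. Since $a=n$ lies in the range $\floor{n/2}<a\leq n$, we obtain
\[
A^{(n)}_n(z,q)=F^{(n)}_{1,1}(z,q)
=\sum_{r_1,\dots,r_n\geq 0}
\frac{z^{r_1}\, q^{r_1^2+\dots+r_n^2+r_2+\dots+r_n}}
{(q;q)_{r_1-r_2}\cdots(q;q)_{r_{n-1}-r_n}(q;q)_{r_n}},
\]
while $a=0$ lies in $0\leq a\leq\floor{n/2}$, giving
\[
A^{(n)}_0(z,q)=F^{(n)}_{0,1}(z,q)
=\sum_{r_1,\dots,r_n\geq 0}
\frac{z^{r_1}\, q^{r_1^2+\dots+r_n^2+r_1+r_2+\dots+r_n}}
{(q;q)_{r_1-r_2}\cdots(q;q)_{r_{n-1}-r_n}(q;q)_{r_n}}.
\]

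Next I would apply Proposition~\ref{Prop_GOW} with $n$ replaced by $n-1$ and $\delta=1$, so that the Hall--Littlewood factor on the right of the claimed identities becomes
\[
P_{(2^r)}\bigl(1,q,q^2,\dots;q^{2n-1}\bigr)
=\sum_{r_1,\dots,r_{n-1}\geq 0}
\frac{q^{r^2-r+r_1^2+\dots+r_{n-1}^2+r_1+\dots+r_{n-1}}}
{(q;q)_{r-r_1}(q;q)_{r_1-r_2}\cdots(q;q)_{r_{n-2}-r_{n-1}}(q;q)_{r_{n-1}}}.
\]
Multiplying by $(zq)^r$ (respectively $(zq^2)^r$) and summing over $r\geq 0$ produces an $n$-fold sum; relabelling the summation variables by $(r,r_1,\dots,r_{n-1})\mapsto(R_1,R_2,\dots,R_n)$ converts the exponent $r^2+r_1^2+\dots+r_{n-1}^2+r_1+\dots+r_{n-1}$ into $R_1^2+\dots+R_n^2+R_2+\dots+R_n$ (respectively, after the extra factor $q^r$, into $R_1^2+\dots+R_n^2+R_1+\dots+R_n$), and recasts the denominator as $(q;q)_{R_1-R_2}\cdots(q;q)_{R_{n-1}-R_n}(q;q)_{R_n}$. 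This is exactly the defining sum of $F^{(n)}_{1,1}(z,q)$ (respectively $F^{(n)}_{0,1}(z,q)$), completing the match.

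There is no real obstacle here: once the identifications $A^{(n)}_n=F^{(n)}_{1,1}$ and $A^{(n)}_0=F^{(n)}_{0,1}$ are in hand, the proof is a mechanical relabelling. The only thing to be careful about is the shift $n\mapsto n-1$ in Proposition~\ref{Prop_GOW} (so that $q^{2n+\delta}=q^{2n-1}$ with $\delta=1$), and the resulting alignment of the outer summation index $r$ with the new leading index $R_1=r_1$ in the $F$-sum.
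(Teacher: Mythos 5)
Your proposal is correct and follows essentially the same route as the paper: the paper's proof likewise combines Proposition~\ref{Prop_GOW} (with $n\mapsto n-1$, $\delta=1$) with the identification $A^{(n)}_{\sigma n}=F^{(n)}_{\sigma,1}$ from \eqref{Eq_A-F}, merely treating the two cases uniformly via a parameter $\sigma\in\{0,1\}$ and starting from the Hall--Littlewood side rather than the $F$-side. The relabelling and the verification that $a=0$ and $a=n$ land in the correct branches of \eqref{Eq_A-F} are exactly as you describe.
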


\begin{proof}
Let $\sigma\in\{0,1\}$.
Then, by Proposition~\ref{Prop_GOW} with $n\mapsto n-1$, $\delta=1$
and $(r,r_1,\dots,r_{n-1}) \mapsto (r_1,r_2,\dots,r_n)$,
\begin{align*}
&\sum_{r=0}^{\infty} (zq^{2-\sigma})^r 
P_{(2^r)}\big(1,q,q^2,\dots;q^{2n-1}\big) \\
&\quad=\sum_{r_1,\dots,r_n\geq 0}
\frac{z^{r_1} q^{r_1^2+\dots+r_n^2+(1-\sigma)r_1+r_2+\dots+r_n}}
{(q)_{r_1-r_2}\cdots(q)_{r_{n-1}-r_n}(q;q)_{r_n}}
=F^{(n)}_{\sigma,1}(z,q).
\end{align*}
By \eqref{Eq_A-F} for $a=\sigma n$, this yields
$A^{(n)}_{\sigma n}(z,q)$.
\end{proof}

To express the functional equations for the two-variable generating
function of CMPP partitions for arbitrary level and rank it is more
convenient to use dominant integral weights instead of the coefficients
$k_0,\dots,k_n$ of the respective fundamental weights 
$\Lap_0,\dots,\Lap_n$ as indexing set.
This motivates the definition
\[
\mathcal{A}^{(n)}_{k_0\Lap_0+\dots+k_n\Lap_n}(z)=
\mathcal{A}^{(n)}_{k_0\Lap_0+\dots+k_n\Lap_n}(z,q):=
\sum_{\la\in\A_{k_0,\dots,k_n}} z^{l(\la)} q^{\abs{\la}},
\]
so that $A^{(n)}_a(z)=\mathcal{A}^{(n)}_{\Lap_a}(z)$.
We expect that for all fixed $k,n$, the generating functions in
\[
\{\mathcal{A}^{(n)}_{\La}(z)\}_{\lev(\La)=2k}
\]
satisfy a system of functional equations generalising \eqref{Eq_MR},
uniquely determining all such functions.
For $n=1$ this system is given by the well-known Rogers--Selberg
functional equations \cite{RR19,Selberg36}:
\begin{equation}\label{Eq_RS}
\mathcal{A}^{(1)}_{(k-a)\Lap_0+a\Lap_1}(z)-
\mathcal{A}^{(1)}_{(k-a+1)\Lap_0+(a-1)\Lap_1}(z)=
(zq)^a\mathcal{A}^{(1)}_{a\Lap_0+(k-a)\Lap_1}(zq),
\end{equation}
where $0\leq a\leq k$ with $\mathcal{A}^{(1)}_{(k+1)\Lap_0-\Lap_1}(z):=0$.
It is exactly these equations that were solved by Andrews in
\cite{Andrews74} to show that 
$\mathcal{A}^{(1)}_{(k-a+1)\Lap_0+(a-1)\Lap_1}(z)$ (for $1\leq a\leq k+1$)
is given by the multisum expression in \eqref{Eq_AG}.
Below we give an incomplete set of functional equations for arbitrary
$k$ and $n$.

\begin{proposition}\label{Prop_fun}
For $a,k$ integers such that $0\leq a\leq k$,
\begin{subequations}
\begin{equation}\label{Eq_fun}
\mathcal{A}^{(n)}_{(k-a)\Lap_0+a\Lap_n}(z)=
\sum_{i=0}^a (zq)^i \mathcal{A}^{(n)}_{i\Lap_0+(a-i)\Lap_1+(k-a)\Lap_n}(zq).
\end{equation}
Moreover, for $k\geq 1$,
\begin{align}\label{Eq_fun2}
\mathcal{A}^{(n)}_{(k-1)\Lap_0+\Lap_1}(z)&=
\mathcal{A}^{(n)}_{\Lap_{n-1}+(k-1)\Lap_n}(zq)+
(zq^2)^k \mathcal{A}^{(n)}_{k\Lap_0}(zq^2) \\
&\quad +zq\sum_{i=0}^{k-1} (zq^2)^i 
\mathcal{A}^{(n)}_{i\Lap_0+(k-i)\Lap_1}(zq^2). \notag
\end{align}
\end{subequations}
\end{proposition}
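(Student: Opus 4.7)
The plan is to derive both functional equations by a combinatorial split-and-shift argument generalising the derivation of the classical $n=1$ Rogers--Selberg equations~\eqref{Eq_RS}. The common idea is to classify each CMPP partition $\la$ indexing the left-hand side according to a small amount of local data read from the bottom rows of the frequency array $\mathscr{F}_n(\la)$, extract that data as a power of $zq$ or $zq^2$, and identify the residual partition $\nu$---obtained by deleting the tagged parts and shifting the rest one or two columns to the left---as a CMPP partition with a modified boundary $(k_0',\dots,k_n')$ whose coordinate sum still equals $k$. Because the CMPP path-sum bound depends only on this total, which is preserved by the shift, the bijection has a chance to work, although the new boundary data typically differs from $(k_0,\dots,k_n)$ by a nontrivial ``rotation'' exchanging contributions from one end of the Dynkin diagram with contributions from the other.

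For \eqref{Eq_fun}, starting from $\la\in\A_{k-a,0^{n-1},a}$, set $i:=f_1^{(n)}(\la)$. The bound $0\leq i\leq a$ is forced by evaluating the path
\[
(p_1,\dots,p_{2n})=\bigl(f_0^{(1)},f_{-1}^{(1)},f_0^{(2)},f_{-1}^{(2)},\dots,f_0^{(n)},f_1^{(n)}\bigr),
\]
whose sum equals $(k-a)+i$ and must be at most $k$. I would then remove the $i$ colour-$n$ parts of size $1$ from $\la$ and shift every remaining part down by one to produce $\nu$, so that the weight factorises as $z^{l(\la)}q^{\abs{\la}}=(zq)^i\cdot(zq)^{l(\nu)}q^{\abs{\nu}}$. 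The main step is to verify $\nu\in\A_{i,a-i,0^{n-2},k-a}$, after which summing over $i\in\{0,\dots,a\}$ yields \eqref{Eq_fun}. As consistency checks, the case $n=1$ recovers the telescoped form of \eqref{Eq_RS}, and the small example $(n,k,a)=(2,1,1)$ can be verified directly by enumerating $\A_{0,0,1}$.

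For \eqref{Eq_fun2}, given $\la\in\A_{k-1,1,0^{n-1}}$ I would perform a three-way case analysis guided by the pair $(f_1^{(1)}(\la),f_2^{(1)}(\la))$ together with the analogous small-colour frequencies. The first term $\mathcal{A}^{(n)}_{\Lap_{n-1}+(k-1)\Lap_n}(zq)$ captures partitions with $f_1^{(c)}(\la)=0$ for all $c$ and $f_2^{(1)}(\la)<k$, where a single shift (together with rotation of the boundary data from colour~$1$ to colour~$n$) suffices. The second term $(zq^2)^k\mathcal{A}^{(n)}_{k\Lap_0}(zq^2)$ captures the extremal case $f_2^{(1)}(\la)=k$ with $f_1^{(1)}(\la)=0$, which forces a double shift and produces $k$ explicit factors of $zq^2$ from the $k$ colour-$1$ twos. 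The interpolating sum over $i\in\{0,\dots,k-1\}$ is indexed by $i=f_2^{(1)}(\la)$ after a single colour-$1$ part of size $1$ has been peeled off, accounting for the overall $zq$ prefactor. The $n=1$ reduction corresponds to the split of $B_{k,1}$ into $(f_1(\la),f_2(\la))\in\{(0,\leq k-1),(0,k),(1,i)_{0\leq i\leq k-1}\}$, which can be verified directly against the Rogers--Selberg recursion.

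The main obstacle in both parts is verifying the CMPP membership of the residual partition $\nu$: one must show that every path on $\mathscr{F}_n(\nu)$ corresponds under the column shift (and boundary rotation) to a path on $\mathscr{F}_n(\la)$ with the same sum, so that the bound $k$ is preserved term by term. Because the boundary data $(k_0',\dots,k_n')$ genuinely differs from $(k_0,\dots,k_n)$---with the original boundary path saturating $k$ being replaced by a different boundary path saturating $k$ on the other side---the bijection requires a careful case analysis of the finitely many path types passing through the leftmost columns of the arrays. Once this path correspondence is established, the bookkeeping of $z$- and $q$-exponents is routine.
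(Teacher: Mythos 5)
Your strategy coincides with the paper's own: classify each partition by the frequencies in the first few columns of its frequency array ($i=f_1^{(n)}$ for \eqref{Eq_fun}; the pair $(f_1^{(1)},f_2^{(1)})$, split into the same three cases, for \eqref{Eq_fun2}), peel off those parts to produce the monomial prefactors, shift columns, and re-identify the residue as a CMPP partition with rotated boundary data. The weight bookkeeping $z^{l(\la)}q^{\abs{\la}}=(zq)^i(zq)^{l(\nu)}q^{\abs{\nu}}$ and the derivation of the bound $i\leq a$ from an explicit path are both correct. However, the step you defer as ``the main obstacle'' is the entire content of the proof, and the criterion you propose for closing it would fail if executed literally. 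You ask that ``every path on $\mathscr{F}_n(\nu)$ correspond under the column shift (and boundary rotation) to a path on $\mathscr{F}_n(\la)$ with the same sum''. No such shift-equivariant, sum-preserving correspondence exists: in \eqref{Eq_fun} a path on the original array through the boundary vertex $k_n=a$ necessarily has the form $P=(p_1,\dots,p_{2n-2},0,a)$, with sum $\Sigma+a$ where $\Sigma=p_1+\dots+p_{2n-2}$, whereas its image under the shift and boundary rotation is $P'=(p_1,\dots,p_{2n-2},a-i,0)$, with sum $\Sigma+a-i$. What actually holds is an equivalence of the two \emph{systems} of path inequalities rather than a path-by-path identity of sums: $P$ comes with the companion $Q=(p_1,\dots,p_{2n-2},0,i)$, automatically admissible when $P$ is since $i\leq a$; after the boundary swap these become $P'$ and $Q'=(p_1,\dots,p_{2n-2},a-i,i)$, and one checks that $Q'$ has the same sum as $P$ (so is binding exactly when $P$ is) and dominates $P'$. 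Without this companion-path argument, or an equivalent, the claimed membership $\nu\in\A_{i,a-i,0^{n-2},k-a}$ --- which must be proved as an ``if and only if'' to get an identity of generating functions rather than an inequality --- is not established.

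The same issue recurs, in aggravated form, in the third case of \eqref{Eq_fun2}: there the triple of entries $1,k-1,1$ (the values of $k_1$, $k_0$ and $f_1^{(1)}$) is replaced by $0,0,k-i$, and justifying this requires observing that on the original array any path through two of these vertices already sums to $k$ and hence forces all of its remaining entries to vanish, while on the modified array the corresponding binding constraints are supplied by companion paths passing through the entries $i$ and $k-i$. Your sketch does not engage with this replacement at all. So the plan is the right one and the case analysis is correct, but the proof is incomplete as written, and the one piece of guidance it offers for completing the key step points in a direction that does not work.
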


For $n=1$ the functional equation \eqref{Eq_fun} simplifies to
\[
\mathcal{A}^{(1)}_{(k-a)\Lap_0+a\Lap_1}(z)=
\sum_{i=0}^a (zq)^i\mathcal{A}^{(1)}_{i\Lap_0+(k-i)\Lap_1}(zq),
\]
which is easily seen to be equivalent to the Rogers--Selberg
equations~\eqref{Eq_RS}.
For arbitrary $n$ but $k=1$ the $a=0$ case of \eqref{Eq_fun}
is \eqref{Eq_MR1} for $a=0$ and the $a=1$ case of \eqref{Eq_fun}
is a combination of the $a=0$ cases of \eqref{Eq_MR1} and
\eqref{Eq_MR2}.
Similarly, the $k=1$ and $a=0$ case of \eqref{Eq_fun2}
is a combination of the $a=0$ case of \eqref{Eq_MR1} and
the $a=1$ case of \eqref{Eq_MR2}.
By \eqref{Eq_fun} with $a=k$ and $z\mapsto zq$, the functional
equation \eqref{Eq_fun2} can be simplified to
\[
\mathcal{A}^{(n)}_{(k-1)\Lap_0+\Lap_1}(z)=
\mathcal{A}^{(n)}_{\Lap_{n-1}+(k-1)\Lap_n}(zq)+
(1-zq)(zq^2)^k \mathcal{A}^{(n)}_{k\Lap_0}(zq^2)
+zq\mathcal{A}^{(n)}_{k\Lap_n}(zq).
\]

\begin{proof}
We first prove \eqref{Eq_fun}, which is the simplest of the two claims.
The task is to show that the generating function
$\mathcal{A}^{(n)}_{(k-a)\Lap_0+a\Lap_n}(z)$ may be expressed as the sum
given on the right-hand side of \eqref{Eq_fun}.
We begin by noting that if $k_0=k-a$, $k_n=a$ and
$k_2=\dots=k_{n-1}=0$, then a necessary condition for a partition
$\la\in\mathcal{P}^{(n)}$ to have an admissible frequency-array is
that $f_1^{(c)}=0$ for $1\leq c\leq n-1$ and $f_1^{(n)}\leq a$.
(More generally, $f_i^{(c)}=0$ for $1\leq i\leq n-1$ and 
$\floor{i/2}<c\leq n-\ceil{i/2}$.)
Replacing $f_1^{(n)}$ by $i$, where $0\leq i\leq a$, the
first four columns of the frequency array of any partition
contributing to $\mathcal{A}^{(n)}_{(k-a)\Lap_0+a\Lap_n}(z)$ take
the form as shown in the left-most of the following three
(partial) frequency arrays:
\tikzmath{\y=10;}
\begin{center}
\begin{tikzpicture}[scale=0.43,line width=0.3pt]
\draw (0,8) node {$\sc a$};
\draw (0,6) node {$\sc 0$};
\draw (0,4.25) node {$\vdots$};
\draw (0,2) node {$\sc 0$};
\draw (0,0) node {$\sc 0$};
\draw (\x,-1) node {$\sc k-a$};
\draw (\x,1) node {$\sc 0$};
\draw (\x,3.25) node {$\vdots$};
\draw (\x,5) node {$\sc 0$};
\draw (\x,7) node {$\sc 0$};
\draw (2*\x,0) node {\red{$\sc 0$}};
\draw (2*\x,2) node {\blue{$\sc 0$}};
\draw (2*\x,4.25) node {$\vdots$};
\draw (2*\x,6) node {\brown{$\sc 0$}};
\draw (2*\x,8) node {\green{$\sc i$}};
\draw (3*\x,-1) node {\red{$\sc f_2^{(1)}$}};
\draw (3*\x,1) node {\blue{$\sc f_2^{(2)}$}};
\draw (3*\x,3.25) node {$\vdots$};
\draw (3*\x,5) node {\brown{$\sc f_2^{(n-1)}$}};
\draw (3*\x,7) node {\green{$\sc f_2^{(n)}$}};
\draw (4.5*\x,3.5) node {$\mapsto$};
\draw[rotate=-30] (-3,6.9) ellipse (1.6 and 0.5);
%%%%%%%%%%%%%%%
\draw (\y+0,8) node {$\sc 0$};
\draw (\y+0,6) node {$\sc 0$};
\draw (\y+0,4.25) node {$\vdots$};
\draw (\y+0,2) node {$\sc 0$};
\draw (\y,0) node {$\sc 0$};
\draw (\y+\x,-1) node {$\sc k-a$};
\draw (\y+\x,1) node {$\sc 0$};
\draw (\y+\x,3.25) node {$\vdots$};
\draw (\y+\x,5) node {$\sc 0$};
\draw (\y+\x,7) node {$\sc a-i$};
\draw (\y+2*\x,0) node {\red{$\sc 0$}};
\draw (\y+2*\x,2) node {\blue{$\sc 0$}};
\draw (\y+2*\x,4.25) node {$\vdots$};
\draw (\y+2*\x,6) node {\brown{$\sc 0$}};
\draw (\y+2*\x,8) node {\green{$\sc i$}};
\draw (\y+3*\x,-1) node {\red{$\sc f_2^{(1)}$}};
\draw (\y+3*\x,1) node {\blue{$\sc f_2^{(2)}$}};
\draw (\y+3*\x,3.25) node {$\vdots$};
\draw (\y+3*\x,5) node {\brown{$\sc f_2^{(n-1)}$}};
\draw (\y+3*\x,7) node {\green{$\sc f_2^{(n)}$}};
\draw (\y+4.5*\x,3.5) node {$\mapsto$};
%%%%%%%%%%%%%%%
\draw (2*\y+0,8) node {$\sc k-a$};
\draw (2*\y+0,6) node {$\sc 0$};
\draw (2*\y+0,4.25) node {$\vdots$};
\draw (2*\y+0,2) node {$\sc 0$};
\draw (2*\y+0,0) node {$\sc a-i$};
\draw (2*\y+\x,-1) node {$\sc i$};
\draw (2*\y+\x,1) node {$\sc 0$};
\draw (2*\y+\x,3.25) node {$\vdots$};
\draw (2*\y+\x,5) node {$\sc 0$};
\draw (2*\y+\x,7) node {$\sc 0$};
\draw (2*\y+2*\x,0) node {\red{$\sc f_1^{(1)}$}};
\draw (2*\y+2*\x,2) node {\blue{$\sc f_1^{(2)}$}};
\draw (2*\y+2*\x,4.25) node {$\vdots$};
\draw (2*\y+2*\x,6) node {\brown{$\sc f_1^{(n-1)}$}};
\draw (2*\y+2*\x,8) node {\green{$\sc f_1^{(n)}$}};
\end{tikzpicture}
\end{center}
where, to emphasise that the proof does not rely on
the fact that $\blue{f_2^{(2)}},\dots,\brown{f_2^{(n-1)}}$
all need to be zero for admissibility, we have not placed
any zeros in the fourth column.
The sole reason for including this column is to visualise
a relabelling of the frequencies $f_i^{(c)}$ in the final
step of the proof.

The objective now is to eliminate the first column.
To achieve this we note that any path $P$ on the left-most
array terminating at the vertex labelled $a$ must be of
the form $P=(p_1,\dots,p_{2n-2},0,a)$, where the final 
two entries correspond to the encircled pair of vertices.
For each such path $P$ there is a companion
$Q=(p_1,\dots,p_{2n-2},0,\green{i})$.
Since $i\leq a$, if $P$ is admissible then so is $Q$.
Replacing the encircled vertices $a$ and $0$ by
$0$ and $a-i$ respectively, $P$ and $Q$ are mapped to
$P'=(p_1,\dots,p_{2n-2},a-i,0)$ and 
$Q'=(p_1,\dots,p_{2n-2},a-i,i)$.
This time the admissibility of $Q'$ guarantees the 
admissibility of $P'$. 
Moreover, $Q'$ is admissible if and only if $P$ is admissible.
Hence we may replace the left-most array by the array shown 
in the middle of the above diagram as it allows for the exact
same set of admissible paths on the full array.
Since the left-most column in the second array contains only
zeros it is redundant and may thus be deleted.
After redrawing the resulting array upside-down and relabelling
$f_j^{(c)}$ by $f_{j-1}^{(n-c+1)}$ for $j\geq 2$ and $1\leq c\leq n$,
this yields the third of the above three arrays.
The contribution of the full set of arrays of this form to the
generating function is
$(zq)^i\mathcal{A}^{(n)}_{i\Lap_0+(a-i)\Lap_1+(k-a)\Lap_n}(zq)$.
Here the prefactor $(zq)^i$ accounts for the fact that the vertex
labelled $i$ in the second column arose from $f_1^{(n)}=i$ in the
original array, thus contributing $(zq)^i$ to the generating function.
The argument $zq$ instead of $z$ in the generating function
accounts for the fact that $f_j$ has been replaced by $f_{j-1}$.
Adding all the contributions from $i\in\{0,1,\dots,a\}$ results in
the right-hand side of \eqref{Eq_fun}.

\medskip
Next we prove the more complicated \eqref{Eq_fun2}.
The initial conditions $k_0=k-1$ and $k_1=1$ force $f_i^{(c)}=0$
for $1\leq i\leq 2$ and $2\leq c\leq n$, so that the first four columns
of the frequency array of any admissible partition take the form
\begin{center}
\begin{tikzpicture}[scale=0.43,line width=0.3pt]
\draw (0,8) node {$\sc 0$};
\draw (0,5.25) node {$\vdots$};
\draw (0,2) node {$\sc 0$};
\draw (0,0) node {$\sc 1$};
\draw (\x,-1) node {$\sc k-1$};
\draw (\x,1) node {$\sc 0$};
\draw (\x,5.25) node {$\vdots$};
\draw (\x,3) node {$\sc 0$};
\draw (\x,7) node {$\sc 0$};
\draw (2*\x,0) node {\red{$\sc f_1^{(1)} $}};
\draw (2*\x,2) node {\blue{$\sc 0$}};
\draw (2*\x,5.25) node {$\vdots$};
\draw (2*\x,8) node {\green{$\sc 0$}};
\draw (3*\x,-1) node {\red{$\sc f_2^{(1)}$}};
\draw (3*\x,1) node {\blue{$\sc f_2^{(2)}$}};
\draw (3*\x,3) node {\pink{$\sc 0$}};
\draw (3*\x,5.25) node {$\vdots$};
\draw (3*\x,7) node {\green{$\sc 0$}};
\end{tikzpicture}
\end{center}
\noindent
Considering $\red{f_1^{(1)}}$ and $\red{f_2^{(1)}}$, there are 
three possible scenarios.
The first is $\red{f_1^{(1)}}=0$ and $\red{f_2^{(1)}}=k$ (which forces
$\blue{f_2^{(2)}}=0$), the second is $\red{f_1^{(1)}}=0$ and 
$\red{f_2^{(1)}}<k$
and the third is $\red{f_1^{(1)}}=1$ and $\red{f_2^{(1)}}=i$
for $1\leq i\leq k-1$ (which again forces $\blue{f_2^{(2)}}=0$).
These three cases lead to the following three partial frequency arrays:

\medskip
\begin{minipage}{0.3\textwidth}
\begin{center}
\begin{tikzpicture}[scale=0.43,line width=0.3pt]
\draw (0,8) node {$\sc 0$};
\draw (0,5.25) node {$\vdots$};
\draw (0,2) node {$\sc 0$};
\draw (0,0) node {$\sc 1$};
\draw (\x,-1) node {$\sc k-1$};
\draw (\x,1) node {$\sc 0$};
\draw (\x,5.25) node {$\vdots$};
\draw (\x,3) node {$\sc 0$};
\draw (\x,7) node {$\sc 0$};
\draw (2*\x,0) node {\red{$\sc 0$}};
\draw (2*\x,2) node {\blue{$\sc 0$}};
\draw (2*\x,5.25) node {$\vdots$};
\draw (2*\x,8) node {\green{$\sc 0$}};
\draw (3*\x,-1) node {\red{$\sc k$}};
\draw (3*\x,1) node {\blue{$\sc 0$}};
\draw (3*\x,3) node {\pink{$\sc 0$}};
\draw (3*\x,5.25) node {$\vdots$};
\draw (3*\x,7) node {\green{$\sc 0$}};
\end{tikzpicture}
\end{center}
\end{minipage}
\begin{minipage}{0.3\textwidth}
\begin{center}
\begin{tikzpicture}[scale=0.4,line width=0.3pt]
\draw (0,8) node {$\sc 0$};
\draw (0,5.25) node {$\vdots$};
\draw (0,2) node {$\sc 0$};
\draw (0,0) node {$\sc 1$};
\draw (\x,-1) node {$\sc k-1$};
\draw (\x,1) node {$\sc 0$};
\draw (\x,5.25) node {$\vdots$};
\draw (\x,3) node {$\sc 0$};
\draw (\x,7) node {$\sc 0$};
\draw (2*\x,0) node {\red{$\sc 0$}};
\draw (2*\x,2) node {\blue{$\sc 0$}};
\draw (2*\x,5.25) node {$\vdots$};
\draw (2*\x,8) node {\green{$\sc 0$}};
\draw (3*\x,-1) node {\red{$\sc f_2^{(1)}$}};
\draw (3*\x,1) node {\blue{$\sc f_2^{(2)}$}};
\draw (3*\x,3) node {\pink{$\sc 0$}};
\draw (3*\x,5.25) node {$\vdots$};
\draw (3*\x,7) node {\green{$\sc 0$}};
\draw[rotate=30] (1.0,0) ellipse (1.6 and 0.5);
\end{tikzpicture}
\end{center}
\end{minipage}
\begin{minipage}{0.3\textwidth}
\begin{center}
\begin{tikzpicture}[use Hobby shortcut,scale=0.4,line width=0.3pt]
\draw (0,8) node {$\sc 0$};
\draw (0,5.25) node {$\vdots$};
\draw (0,2) node {$\sc 0$};
\draw (0,0) node {$\sc 1$};
\draw (\x,-1) node {$\sc k-1$};
\draw (\x,1) node {$\sc 0$};
\draw (\x,5.25) node {$\vdots$};
\draw (\x,3) node {$\sc 0$};
\draw (\x,7) node {$\sc 0$};
\draw (2*\x,0) node {\red{$\sc 1$}};
\draw (2*\x,2) node {\blue{$\sc 0$}};
\draw (2*\x,5.25) node {$\vdots$};
\draw (2*\x,8) node {\green{$\sc 0$}};
\draw (3*\x,-1) node {\red{$\sc i$}};
\draw (3*\x,1) node {\blue{$\sc 0$}};
\draw (3*\x,3) node {\pink{$\sc 0$}};
\draw (3*\x,5.25) node {$\vdots$};
\draw (3*\x,7) node {\green{$\sc 0$}};
\draw[closed] (\x,-1.5)..(3.7,0.4)..(\x,-0.3)..(-0.236,0.4);
\end{tikzpicture}
\end{center}
\end{minipage}

\medskip
\noindent
where in the second array it is assumed that $\red{f_2^{(1)}}<k$.
The first two columns in the left-most array can be deleted
without affecting the admissibility of any of the paths.
Relabelling $f_i^{(c)}$ as $f_{i-2}^{(c)}$ for $i\geq 2$ this
gives the contribution $(zq^2)^k \mathcal{A}^{(n)}_{k\Lap_0}(zq^2)$
to the generating function.
In the middle array, if we swap the positions of the encircled
$0$ and $1$, no paths are affected except for paths of the
form $(\red{f_2^{(1)}},\red{0},0,\dots)$ where the second of the
two zeros corresponds to the encircled zero.
Such paths map to $(\red{f_2^{(1)}},\red{0},1,\dots)$ by the swap,
which is exactly what is needed to ensure that $\red{f_2^{(1)}}<k$.
After the swap, the first column can be deleted.
Drawing the resulting array upside down and relabelling
$f_i^{(c)}$ as $f_{i-1}^{(n-c+1)}$ for $i\geq 1$ and $1\leq c\leq n$,
yields the contribution $\mathcal{A}^{(n)}_{\Lap_{n-1}+(k-1)\Lap_n}(zq)$
to the generating function.
Finally, in the right-most array we can replace the encircled triple
$1,k-1,\red{1}$ by $0,0,k-i$.
Prior to the replacement there are two types of paths through
at least one of the vertices in the first two columns labelled $k-1$
and $1$;
paths of the form $(k-1,1,0,p_4,p_5,\dots)$ and paths of the form 
$(k-1,\red{1},\bar{p}_3,\bar{p}_4,\dots)$.
Since the first two entries sum to $k$ in both cases, this forces
all the $p_i$ and $\bar{p}_i$ to be zero.
By the replacement, these paths become
$(0,0,0,p_4,p_5,\dots)$ and $(0,\red{1},\bar{p}_3,\bar{p}_4,\dots)$,
which imposes no constraints on the $p_i$ and the minor constraint 
$\sum_i \bar{p}_i\leq k-1$ on the $\bar{p}_i$.
However, prior to the replacement we also have the companion paths
$(\red{i},\red{1},0,p_4,p_5,\dots)$ and 
$(\red{i},\red{1},\bar{p}_4,\bar{p}_5,\dots)$
(imposing weaker constraints on the $p_i$ and $\bar{p}_i$ than vanishing,
unless $i=k-1$) which map to $(\red{i},\red{k-i},0,p_4,p_5,\dots)$ and 
$(\red{i},\red{k-i},\bar{p}_3,\bar{p}_4,\dots)$,
once again forcing all $p_i$ and $\bar{p}_i$ to be zero.
This justifies the above change in the encircled triple.
After the change, the first two columns can again be deleted.
Once again relabelling $f_i^{(c)}$ as $f_{i-2}^{(c)}$ for $i\geq 2$ this
yields the contribution 
$zq (zq^2)^i \mathcal{A}^{(n)}_{i\Lap_0+(k-i)\Lap_1}(zq^2)$ for each
$0\leq i\leq k-1$.
Adding up all the various contributions to the generating function
completes the proof.
\end{proof}

%%%%%%%%%%%%%%%%%%%%%%%%%%%%%%%%%%%%%%%%%%%%%%%%%%%%%%%%%%%%

\section{The CMPP conjectures for \texorpdfstring{$\mathrm{C}^{(1)}_n$}{C(1)n} and
\texorpdfstring{$\mathrm{D}^{(2)}_{n+1}$}{D(2)n+1}}\label{Sec_CD}

There is a second conjecture in the work of CMPP that was subsequently 
complemented by Primc in \cite{Primc24}.
These two conjectures concern (a) partitions in
$\mathcal{P}^{(n+1)}$ in which the parts of the $(n+1)$th colour are
all odd (\hspace{1sp}\cite[Conjecture~3.3]{CMPP22}) or (b)
partitions in $\mathcal{P}^{(n)}$ in which the parts of the $n$th colour
are all even (\hspace{1sp}\cite[Conjecture~2.1]{Primc24}).
For reasons of symmetry we will adopt somewhat different
coloured-partition models (along the lines of \cite{DK22}) to describe
these two additional conjectures, instead considering
(a') partitions in $\mathcal{P}^{(2n+1)}$ in which all parts with an 
odd colour label are odd and all parts with an even colour label are
even (i.e., $f_i^{(c)}=0$ unless $c+i$ is even) and
(b') partitions in $\mathcal{P}^{(2n-1)}$ in which all parts with an
even colour label are odd and all parts with an odd colour label are
even (i.e., $f_i^{(c)}=0$ unless $c+i$ is odd).
In the corresponding frequency arrays for these two sets of coloured
partitions the ``forbidden'' $f_i^{(c)}$ are omitted rather than 
represented as zeros. 
For fixed nonnegative integers $k_0,\dots,k_n$, which are again to be
viewed as initial or boundary conditions, the frequency arrays then
take the form

\smallskip
\tikzmath{\z=17.5;}
\begin{center}
\begin{tikzpicture}[scale=0.37,line width=0.3pt]
\draw (0,7) node {$\sc k_n$};
\draw (0,4.75) node {$\sc \vdots$};
\draw (0,2) node {$\sc k_1$};
\draw (0,0) node {$\sc k_0$};
\draw (\x,1) node {$\sc 0$};
\draw (\x,3.75) node {$\sc \vdots$};
\draw (\x,6) node {$\sc 0$};
\draw (2*\x,0) node {\red{$\sc f_1^{(1)}$}};
\draw (2*\x,2) node {\pink{$\sc f_1^{(3)}$}};
\draw (2*\x,4.75) node {$\sc \vdots$};
\draw (2*\x,7) node {\green{$\sc f_1^{(2n+1)}$}};
\draw (3*\x,1) node {\blue{$\sc f_2^{(2)}$}};
\draw (3*\x,3.75) node {$\vdots$};
\draw (3*\x,6) node {\brown{$\sc f_2^{(2n)}$}};
\draw (4*\x,-2) node {(a')};
\draw (4*\x,0) node {\red{$\sc f_3^{(1)}$}};
\draw (4*\x,2) node {\pink{$\sc f_3^{(3)}$}};
\draw (4*\x,4.75) node {$\vdots$};
\draw (4*\x,7) node {\green{$\sc f_3^{(2n+1)}$}};
\draw (5*\x,1) node {\blue{$\sc f_4^{(2)}$}};
\draw (5*\x,3.75) node {$\vdots$};
\draw (5*\x,6) node {\brown{$\sc f_4^{(2n)}$}};
\draw (6*\x,0) node {\red{$\sc f_5^{(1)}$}};
\draw (6*\x,2) node {\pink{$\sc f_5^{(3)}$}};
\draw (6*\x,4.75) node {$\vdots$};
\draw (6*\x,7) node {\green{$\sc f_5^{(2n+1)}$}};
\draw (7*\x,1) node {\blue{$\sc f_6^{(2)}$}};
\draw (7*\x,3.75) node {$\vdots$};
\draw (7*\x,6) node {\brown{$\sc f_6^{(2n)}$}};
\draw (8*\x,0) node {\red{$\dots$}};
\draw (8*\x,2) node {\pink{$\dots$}};
\draw (8*\x,7) node {\green{$\dots$}};
%%%%%%%%%%%%%%%%%%%%%%%%%%%%%%%%%%%%%%%%%%%%%%%%%%%%%%
\draw (\z,6) node {$\sc k_{n-1}$};
\draw (\z,4.75) node {$\sc \vdots$};
\draw (\z,3) node {$\sc k_2$};
\draw (\z,1) node {$\sc k_1$};
\draw (\z+\x,0) node {$\sc k_0$};
\draw (\z+\x,2) node {$\sc 0$};
\draw (\z+\x,3.75) node {$\sc \vdots$};
\draw (\z+\x,5) node {$\sc 0$};
\draw (\z+\x,7) node {$\sc k_n$};
\draw (\z+2*\x,1) node {\blue{$\sc f_1^{(2)}$}};
\draw (\z+2*\x,3.75) node {$\sc \vdots$};
\draw (\z+2*\x,6) node {\brown{$\sc f_1^{(2n-2)}$}};
\draw (\z+3*\x,0) node {\red{$\sc f_2^{(1)}$}};
\draw (\z+3*\x,2) node {\pink{$\sc f_2^{(3)}$}};
\draw (\z+3*\x,4.75) node {$\vdots$};
\draw (\z+3*\x,7) node {\green{$\sc f_2^{(2n-1)}$}};
\draw (\z+4*\x,1) node {\blue{$\sc f_3^{(2)}$}};
\draw (\z+4*\x,3.75) node {$\vdots$};
\draw (\z+4*\x,-2) node {(b')};
\draw (\z+4*\x,6) node {\brown{$\sc f_3^{(2n-2)}$}};
\draw (\z+5*\x,0) node {\red{$\sc f_4^{(1)}$}};
\draw (\z+5*\x,2) node {\pink{$\sc f_4^{(3)}$}};
\draw (\z+5*\x,4.75) node {$\vdots$};
\draw (\z+5*\x,7) node {\green{$\sc f_4^{(2n-1)}$}};
\draw (\z+6*\x,1) node {\blue{$\sc f_5^{(2)}$}};
\draw (\z+6*\x,3.75) node {$\vdots$};
\draw (\z+6*\x,6) node {\brown{$\sc f_5^{(2n-2)}$}};
\draw (\z+7*\x,0) node {\red{$\sc f_6^{(1)}$}};
\draw (\z+7*\x,2) node {\pink{$\sc f_6^{(3)}$}};
\draw (\z+7*\x,4.75) node {$\vdots$};
\draw (\z+7*\x,7) node {\green{$\sc f_6^{(2n-1)}$}};
\draw (\z+8*\x,1) node {\blue{$\dots$}};
\draw (\z+8*\x,6) node {\brown{$\dots$}};
\end{tikzpicture}
\end{center}
where $n\geq 1$ in case (a') and $n\geq 2$ in case (b').
In order to test some of the conjectures below, we extend 
the above to $n=0$ in case (a') and $n=1$ in case (b'),
both corresponding to one-row frequency arrays.
In the case of (b') the correct labelling of the 
single left-boundary vertex is given by $k_0+k_1$.

In case (a'), set $m:=2n+1$, 
$f_{-1}^{(2c+1)}:=k_c$ for $0\leq c\leq n$ and
$f_0^{(2c)}:=0$ for $1\leq c\leq n$.
Similarly, in case (b'), set $m=2n-1$,
$f_{-1}^{(2c)}:=k_c$ for $1\leq c\leq n-1$ and
$f_0^{(2c-1)}:=k_0\delta_{c,1}+k_n\delta_{c,n}$ 
for $1\leq c\leq n$.
Then a path on the frequency array of type (a') (resp.\ (b')) is a sequence
$P=(p_1,p_2,\dots,p_m)=(f_{i_1}^{(1)},f_{i_2}^{(2)},\dots,f_{i_m}^{(m)})$
such that for all $1\leq c\leq m$, $i_c\geq -1$, $i_c+c$ is even
(resp.\ odd) and $\abs{i_r-i_{r+1}}=1$.
A coloured partition of type (a') or (b') is admissible if for all paths
on its frequency array
\[
\sum_{i=1}^m p_i\leq k_0+\dots+k_n.
\]
In both cases, frequency arrays of coloured partitions exhibit a
$\mathbb{Z}_2$-symmetry, corresponding to $f_i^{(c)}\mapsto f_i^{(m-c+1)}$.
As we shall see shortly, this is a reflection of the diagram automorphisms
of the Dynkin diagrams of the underlying affine Lie algebras: 
$\mathrm{C}^{(1)}_n$ for the coloured partitions of type (a') and 
$\mathrm{D}^{(2)}_{n+1}$ for the coloured partitions of type (b').

To state the further conjectures of CMPP and Primc, we denote
the set of all admissible partitions of type (a') (resp.\ (b))
by $\mathscr{C}_{k_0,\dots,k_n}$ (resp.\ $\mathscr{D}_{k_0,\dots,k_n}$).

\begin{conjecture}[CMPP {\cite[Conjecture 3.3]{CMPP22}}]
\label{Con_Cn1}
For $k,n$ nonnegative integers and $k_0,\dots,k_n$ 
nonnegative integers such that $k_0+\dots+k_n=k$,
\begin{align*}
\sum_{\la\in\C_{k_0,\dots,k_n}} q^{\abs{\la}} 
&=\frac{(q^{k+n+1};q^{2k+2n+2})_{\infty}
(q^{2k+2n+2};q^{2k+2n+2})_{\infty}^{n}}{(q;q^2)_{\infty}(q;q)_{\infty}^n}
\\[-2mm]
&\quad\times \prod_{i=1}^n
\theta\big(q^{\la_i+n-i+1};q^{k+n+1}\big) \\
&\quad\times \prod_{1\leq i<j\leq n}
\theta\big(q^{\la_i-\la_j-i+j},q^{\la_i+\la_j+2n-i-j+2};q^{2k+2n+2}\big),
\end{align*}
where $\la_i:=k_i+\dots+k_n$ for $1\leq i\leq n$.
\end{conjecture}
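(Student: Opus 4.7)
The plan is to mirror the strategy that Section~\ref{Sec_Special} uses for the $\mathrm{A}^{(2)}_{2n}$ conjecture. First I would introduce the bivariate refinement
\[
\mathcal{C}^{(n)}_\La(z,q) := \sum_{\la\in\C_{k_0,\dots,k_n}} z^{l(\la)} q^{\abs{\la}}, \qquad \La = \sum_{i\in I} k_i\La_i \in P_{+}^k,
\]
and derive functional equations for it that are the $\mathrm{C}^{(1)}_n$-analogues of Proposition~\ref{Prop_fun}. The derivation proceeds exactly as in Section~\ref{Sec_Special}: one strips off the leftmost column of the type-(a') frequency array, enumerates the admissible boundary values compatible with the path-sum constraint $\sum p_i \leq k$, and rewrites each resulting sub-array as a (shifted) frequency array for some $\C_{k_0',\dots,k_n'}$ corresponding to a new dominant integral weight $\La'$ of the same level. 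For $\mathrm{C}^{(1)}_n$ the case analysis is simpler than for $\mathrm{A}^{(2)}_{2n}$ because $\Delta^{\ast} = \emptyset$; moreover, the $\mathbb{Z}_2$-diagram automorphism $f_i^{(c)}\mapsto f_i^{(m-c+1)}$ halves the number of independent equations that need to be written down.

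Second, I would establish that the product side $\varphi_n(\chi_\La)$ in \eqref{Eq_Cn-PS} satisfies the same system. The natural route is to exhibit it as the $z=1$ specialisation of an explicit bivariate expression built from Hall--Littlewood polynomials, i.e., to prove a $\mathrm{C}^{(1)}_n$-analogue of Conjecture~\ref{Con_A2n2-qseries}; such multisum representations should be accessible via the Bailey-lemma technology underlying Proposition~\ref{Prop_GOW}, combined with Lepowsky's numerator formula for the genuinely principal specialisation at play here. A uniqueness argument for the functional system, subject to the initial conditions $\mathcal{C}^{(n)}_{k\La_0}(0,q)=1$ and the $k=0$ normalisation $\mathcal{C}^{(n)}_0(z,q)=1$, would then finish the proof. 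As a sanity check, the $n=1$ case collapses, via $\C_{k_0,k_1}=B_{k_0+k_1,k_1}$ (the analogue of \eqref{Eq_B-B}) and the coincidence $\mathrm{C}^{(1)}_1\cong\mathrm{A}^{(1)}_1$, to the Andrews--Gordon identity \eqref{Eq_AG2}, and the ``corner'' cases where all but one of the $k_i$ vanish can be tackled directly by Hall--Littlewood methods.

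The principal obstacle is obtaining a \emph{complete} system of functional equations. Proposition~\ref{Prop_fun} in the $\mathrm{A}^{(2)}_{2n}$ setting already supplies only a partial system and leaves the $|P_{+}^{2k}|$-dimensional problem under-determined for large $k$ and $n$, and the same deficiency is to be expected here. Finding the missing relations appears to demand subtler path-surgery on the frequency array than the column-stripping move used in Proposition~\ref{Prop_fun}; candidate supplementary equations can be guessed by matching the Corteel--Welsh-type functional equations for cylindric partitions, but justifying them combinatorially requires handling several overlapping path cases simultaneously. Once the system is in place, verifying that $\varphi_n(\chi_\La)$ solves it amounts, after expansion via the Weyl--Kac formula \eqref{Eq_Weyl-Kac}, to nontrivial theta-function identities of Macdonald type for the affine Weyl group of $\mathrm{C}^{(1)}_n$. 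It is precisely these identities whose combinatorial explanation the CMPP conjecture seeks to provide, and this is the reason the full statement has remained open.
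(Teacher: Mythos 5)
The statement you are asked to prove is a \emph{conjecture} in the paper (Conjecture~\ref{Con_Cn1}); the authors do not prove it, and it remains open except in special cases: $n=0$ and $n=1$ (direct arguments and the Meurman--Primc results), $k=1$ (via Jing--Misra--Savage, Dousse--Konan, or the functional-equation solution \eqref{Eq_C-F} together with Bressoud's even-modulus identities), and the corner weight $k\Lap_0$ (the recent Primc--Trup\v{c}evi\'c theorem). Your proposal is a research plan rather than a proof, and you candidly concede as much in your final paragraph; measured against the task of actually establishing the statement, it has genuine gaps at every load-bearing step.

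Concretely: (i) the functional equations you would derive are only partially available --- the paper's Propositions~\ref{Prop_fun-CDa} and \ref{Prop_fun-CDb} give an incomplete system, and a uniqueness argument is only carried through for $\mathrm{D}^{(2)}_3$ at fixed level (the Lemma following Proposition~\ref{Prop_fun-CDb}) and, implicitly, for rank one; for general $n$ and $k$ no complete, uniquely-determining system is known, and you offer no mechanism for producing the missing relations beyond ``subtler path-surgery''. (ii) The bivariate Hall--Littlewood representation you invoke for the sum side is itself Conjecture~\ref{Con_Cn1Dn2-qseries} in the paper, proved only for $n=0$, $k=1$, and (at $z=1$, for the weight $k\Lap_0$) via the Primc--Trup\v{c}evi\'c result combined with \cite[Theorem 1.2]{GOW16}; it cannot be assumed, and in any case it covers only the extremal weights $k\Lap_0$ and $k\Lap_n$, not general $k_0,\dots,k_n$. (iii) Verifying that $\varphi_n(\chi_\La)$ satisfies the (hypothetical) complete system is not a formality: the product side is known by Lepowsky's formula, but showing it obeys a recursive system in a deformation parameter $z$ requires a $z$-refinement of the product that does not exist. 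One correct observation worth keeping: for $\mathrm{C}^{(1)}_n$ the specialisation $\varphi_n$ \emph{is} the principal one, so the product form \eqref{Eq_Cn-PS} of the right-hand side is already a theorem; the open content is entirely on the combinatorial side.
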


For $k_0=k$ and $k_1=\dots=k_n=0$ this was first stated by
Primc and \v{S}iki\'c in the form of a conjectural basis for the
$\mathrm{C}^{(1)}_n$-standard module $L(k\Lap_0)$, see
\cite[Conjecture~1]{PS19} (and \cite[Theorem 12.1]{PS16}
for the $L(\Lap_0)$-case).
Building on results from \cite{BPT18}, this basis-conjecture was
recently proved by Primc and Trup\v{c}evi\'c in \cite[Theorem 2.1]{PT24}
by establishing a connection with Feigin--Stoyanovsky-type subspaces 
for $\mathrm{C}^{(1)}_{2n}$.
Hence for this special case Conjecture~\eqref{Con_Cn1} is now a
theorem.

Once again CMPP do not give the product on the right in the above 
explicit form, instead using a description in terms of congruence
triangles, see also \cite{BKMP24}.
Unlike Conjecture~\ref{Con_A2n2}, however, CMPP do identify the
product as the principally specialised character of the standard 
$\mathrm{C}^{(1)}_n$-module of highest weight
$\La=k_0\Lap_0+\dots+k_n\Lap_n$, i.e., as $\phi_n(\chi_{\La})$ 
in accordance with \eqref{Eq_Cn-PS}.
Here $\mathrm{C}^{(1)}_1$ should be interpreted as $\mathrm{A}^{(1)}_1$.
Specifically, Conjecture~\ref{Con_Cn1} for $n=1$ amounts to
\[
\sum_{\la\in\C_{k_0,k_1}} q^{\abs{\la}} 
=\frac{(q^{k_0+1},q^{k_1+1},q^{k_0+k_1+2};q^{k_0+k_1+2})_{\infty}}
{(q;q^2)_{\infty}(q;q)_{\infty}}, 
\]
where the right-hand side exactly is the principally specialised
$\mathrm{A}^{(1)}_1$-standard module of highest weight
$\La=k_0\La_0+k_1\La_1$, see \cite[Theorem 5.9]{LM78}.
Mapping the frequencies of the three-colour partitions to frequencies
of two-colour partitions as follows:
\smallskip
\tikzmath{\z=17.5;}
\begin{center}
\begin{tikzpicture}[scale=0.37,line width=0.3pt]
\draw (0,2) node {$\sc k_1$};
\draw (0,0) node {$\sc k_0$};
\draw (\x,1) node {$\sc 0$};
\draw (2*\x,0) node {\red{$\sc f_1^{(1)}$}};
\draw (2*\x,2) node {\pink{$\sc f_1^{(3)}$}};
\draw (3*\x,1) node {\blue{$\sc f_2^{(2)}$}};
\draw (4*\x,0) node {\red{$\sc f_3^{(1)}$}};
\draw (4*\x,2) node {\pink{$\sc f_3^{(3)}$}};
\draw (5*\x,1) node {\blue{$\sc f_4^{(2)}$}};
\draw (6*\x,0) node {\red{$\sc f_5^{(1)}$}};
\draw (6*\x,2) node {\pink{$\sc f_5^{(3)}$}};
\draw (7*\x,1) node {\blue{$\sc f_6^{(2)}$}};
\draw (8*\x,0) node {\red{$\dots$}};
\draw (8*\x,2) node {\pink{$\dots$}};
\draw (9.1*\x,1) node {$\longmapsto$};
\draw (\z+0,2) node {$\sc k_1$};
\draw (\z+0,0) node {$\sc k_0$};
\draw (\z+\x,1) node {$\sc 0$};
\draw (\z+2*\x,0) node {\red{$\sc f^{(2)}_1$}};
\draw (\z+2*\x,2) node {\blue{$\sc f_1^{(1)}$}};
\draw (\z+3*\x,1) node {\blue{$\sc f_2^{(1)}$}};
\draw (\z+4*\x,0) node {\blue{$\sc f_3^{(1)}$}};
\draw (\z+4*\x,2) node {\red{$\sc f_3^{(2)}$}};
\draw (\z+5*\x,1) node {\blue{$\sc f_4^{(1)}$}};
\draw (\z+6*\x,0) node {\red{$\sc f_5^{(2)}$}};
\draw (\z+6*\x,2) node {\blue{$\sc f_5^{(1)}$}};
\draw (\z+7*\x,1) node {\blue{$\sc f_6^{(1)}$}};
\draw (\z+8*\x,0) node {$\dots$};
\draw (\z+8*\x,2) node {$\dots$};
\end{tikzpicture}
\end{center}
where the second (red) colour on the right has only odd parts and
where the condition on the paths is unchanged, i.e., for
every path $P=(p_1,p_2,p_3)$, $p_1+p_2+p_3\leq k_0+k_1$,
we obtain the two-colour partition model of 
\cite[Equations (11.2.10) \& (11.2.11)]{MP99}.
By results of that same paper, this establishes
Conjecture~\ref{Con_Cn1} for $n=1$.
The case $n=0$, which has no Lie-algebraic interpretation, is
even simpler.
Since $\C_k$ is the set of ordinary partitions into odd parts such
that no part occurs more than $k$ times,
\begin{equation}\label{Eq_finite-ell}
\sum_{\substack{\la\in\C_k \\[1pt]\la_1\leq 2\ell-1}} 
\Big(\frac{z}{q}\Big)^{l(\la)} q^{\abs{\la}}
=\frac{(z^{k+1};q^{2k+2})_{\ell}}{(z;q^2)_{\ell}}.
\end{equation}
Taking the $\ell\to\infty$ limit and setting $z=q$, this implies that
\[
\sum_{\la\in\C_k} q^{\abs{\la}}
=\frac{(q^{k+1};q^{2k+2})_{\infty}}{(q;q^2)_{\infty}}
=\frac{(q^2;q^2)_{\infty}(q^{k+1};q^{2k+2})_{\infty}}
{(q;q)_{\infty}},
\]
in accordance with Conjecture~\ref{Con_Cn1} for $n=0$.

A final extremal case of the conjecture is $k=1$, i.e., 
$k_i=\delta_{i,a}$ for some fixed $a\in\{0,1,\dots,n\}$.
For such $a$, define
\[
\C_a^{(n)}(N):=\C_{0^a,1,0^{n-a}}(N)
\quad\text{and}\quad
\C_a^{(n)}:=\bigcup_{N\geq 0} \C_a^{(n)}(N).
\]
Then 
\[
\sum_{\la\in\C_a^{(n)}} q^{\abs{\la}}= 
\frac{(q^{2a+2},q^{2n-2a+2},q^{2n+4};q^{2n+4})_{\infty}}{(q;q)_{\infty}}.
\]
For $a=0$ this is \cite[Theorem 12.1]{PS16} and for arbitrary $a$
this follows from the coloured partition theorem \cite[Theorem 1.2]{JMS01}
of Jing et al.
In particular, the correspondence between $\C_a^{(n)}$ and the set of
coloured partitions $C_N(M,r)$ defined by Jing, Misra and Savage is
\[
M=2n+4,\quad 
r=\begin{cases} 2a+2 & \text{for $0\leq a\leq \floor{n/2}$}, \\
2n-2a+2 & \text{for $\ceil{n/2}\leq a\leq n$},
\end{cases}
\]
where in the second case the colour labelling of \cite{JMS01} needs to be
reversed.
Two further proofs of Conjecture~\ref{Con_Cn1} for level-one modules
have been found to date.
In \cite{DK22}, Dousse and Konan employed the theory of perfect 
crystals to prove a formula for the full character 
of any level-one standard module of $\mathrm{C}^{(1)}_n$.
Upon principal specialisation, their result implies the CMPP
formula for level one.
A third proof using functional equations was obtained by the second
author in \cite{Russell23}.
If
\[
C^{(n)}_a(z,q):=\sum_{\la\in\C^{(n)}_a} z^{l(\la)}q^{\abs{\la}},
\]
it follows from \cite{Russell23} that
\begin{equation}\label{Eq_C-F}
C^{(n)}_a(z,q)=\begin{cases}
F^{(n+1)}_{2a+1,0}(z,q) & \text{for $0\leq a\leq\floor{n/2}$}, \\[2mm]
F^{(n+1)}_{2n-2a+1,0}(z,q) & \text{for $\ceil{n/2}\leq a\leq n$},
\end{cases}
\end{equation}
with $F^{(n)}_{a,\delta}(z,q)$ defined in \eqref{Eq_Fdef}.
This implies the claimed product for $z=1$ thanks to Bressoud's
even modulus analogue of the Andrews--Gordon identities proven
in~\cite{Bressoud79,Bressoud80}.

\medskip

Primc's complement of Conjecture~\ref{Con_Cn1} may be stated as follows.

\begin{conjecture}[Primc {\cite[Conjecture 2.1]{Primc24}}]\label{Con_Dn2}
For $k$ a nonnegative integer, $n$ a positive integer and
$k_0,\dots,k_n$ nonnegative integers such that $k_0+\dots+k_n=k$,
\begin{align*}
&\sum_{\la\in\D_{k_0,\dots,k_n}} q^{\abs{\la}} \\
&\qquad= \frac{(q^{2k+2n};q^{2k+2n})_{\infty}^n}{(q^2;q^2)_{\infty}
(q;q)_{\infty}^{n-1}}
\prod_{1\leq i<j\leq n}
\theta\big(q^{\la_i-\la_j-i+j},q^{\la_i+\la_j+2n-i-j+1};q^{2k+2n}\big),
\end{align*}
where $\la_i:=k_i+\dots+k_n$ for $1\leq i\leq n$.
\end{conjecture}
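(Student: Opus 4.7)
The strategy I would pursue closely mirrors the approach used for $\mathrm{A}^{(2)}_{2n}$ in Section~\ref{Sec_Special}. Introduce the bivariate generating function
\[
\mathcal{D}^{(n)}_{k_0\Lap_0+\dots+k_n\Lap_n}(z,q):=\sum_{\la\in\D_{k_0,\dots,k_n}} z^{l(\la)} q^{\abs{\la}},
\]
and aim to show that it equals the appropriate $z$-extension of $\varphi_n(\chi_\La)$, where $\La=2k_0\Lap_0+k_1\Lap_1+\dots+k_{n-1}\Lap_{n-1}+2k_n\Lap_n$. The key idea is to characterise both sides as the unique solution to a common system of functional equations with matching initial data.

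First, I would anchor the argument with small cases. For $k=0$ both sides equal $1$, and for $n=1$ (with the one-row convention for the frequency array) the set $\D_{k_0,k_1}$ collapses to partitions into odd parts with controlled multiplicities, and the identity reduces via a limiting form of \eqref{Eq_finite-ell} to an instance of \eqref{Eq_D2n-product} for $\mathrm{D}^{(2)}_2\cong\mathrm{A}^{(1)}_1$. For $k=1$ (level one), I would exploit the $\mathbb{Z}_2$ symmetry of the Dynkin diagram and prove the product formula directly by matching $\D^{(n)}_a$ with the coloured partitions of \cite{JMS01} (a different even-modulus specialisation than in \eqref{Eq_JMS} and \eqref{Eq_C-F}), or equivalently by obtaining an explicit Andrews--Gordon-type multisum via the Bailey-pair machinery of Proposition~\ref{Prop_GOW}.

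Next, by the path-surgery technique used in the proof of Proposition~\ref{Prop_fun}, I would derive a system of functional equations for the $\mathcal{D}^{(n)}_{\La}(z)$ indexed by $\La\in P_{+}^{2k}$, working column by column on the (b')-type frequency array. The two marked vertices (at the endpoints $0$ and $n$ of the $\mathrm{D}^{(2)}_{n+1}$ Dynkin diagram) make it natural to look for two \emph{coupled} families of relations, linking $\mathcal{D}^{(n)}_{k_0\Lap_0+\dots+k_n\Lap_n}(z)$ both to generating functions with $k_0$ decremented (and extra even-part contributions $(zq^2)^i$) and, via the diagram involution, to generating functions with $k_n$ decremented. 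A parallel derivation must then show that $z$-extensions of the right-hand sides of Conjecture~\ref{Con_Dn2}, built from $\varphi_n(\chi_{\La})$, satisfy exactly the same functional equations; the required $q$-theta identities should follow from the Macdonald identity for $\mathrm{D}^{(2)}_{n+1}$ underlying \eqref{Eq_D2n-product}, in the spirit of the computations of the appendix. Finally, a uniqueness argument based on the lowest power of $q$ in each equation (comparing $\mathcal{D}^{(n)}_\La(z)\equiv 1\pmod q$) would pin down the solution and close the proof.

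The principal obstacle I expect is Step~2: the combinatorial path-replacement arguments are substantially more intricate than in the $\mathrm{A}^{(2)}_{2n}$ setting because of the alternating parity condition $f_i^{(c)}=0$ unless $c+i$ is odd and the fact that \emph{both} end-colours $1$ and $2n-1$ require special treatment. In particular, producing a complete, closed, and rank-and-level-uniform system of functional equations analogous to \eqref{Eq_fun}--\eqref{Eq_fun2} looks delicate, since eliminating leftmost columns forces simultaneous case analysis on two non-commuting endpoints. A viable fall-back, should the direct approach prove intractable, would be to adapt the perfect-crystal methods of \cite{DK22} to the non-principal specialisation $\varphi_n$, using the $\mathrm{D}^{(2)}_{n+1}$ perfect crystal to build an explicit bijective correspondence between the partitions in $\D_{k_0,\dots,k_n}$ and paths in the relevant crystal graph whose energy function reproduces $\varphi_n(\chi_\La)$.
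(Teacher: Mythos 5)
The statement you are asked to prove is stated in the paper as an open conjecture (Primc's conjecture), and the paper does not contain a proof of it: the authors establish only the $n=1$ case (via the elementary product formula for partitions into even parts with bounded multiplicity) and the $k=1$ case (via the Jing--Misra--Savage theorem, or equivalently via \eqref{Eq_D-F} together with Bressoud's even-modulus identities), and they explicitly note that already the $n=2$ case ``appears to still be open.'' So your proposal should be judged as a strategy for an open problem, not as a reconstruction of an existing argument. As such, it is broadly aligned with what the authors actually do for their partial results: they derive coupled functional equations by exactly the column-elimination surgery you describe (Propositions~\ref{Prop_fun-CDa} and \ref{Prop_fun-CDb}), and for $n=2$ they prove (Lemma~\ref{Lem}-style) that these equations together with the diagram symmetry and the initial condition $\mathcal{D}^{(2)}_{\La}(0)=1$ uniquely determine all $\mathcal{D}^{(2)}_{\La}(z,q)$.

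The genuine gap is in your Step in which ``a parallel derivation must then show that $z$-extensions of the right-hand sides of Conjecture~\ref{Con_Dn2}, built from $\varphi_n(\chi_{\La})$, satisfy exactly the same functional equations.'' No such $z$-extension of $\varphi_n(\chi_{\La})$ is known; the product side exists only at $z=1$, while the functional equations intrinsically involve the shift $z\mapsto zq^2$ (and, in the refined setting, a second variable $w$), so one cannot verify them on the product alone. To run the uniqueness argument one needs an explicit two-variable candidate (e.g.\ an Andrews--Gordon-type multisum) that both satisfies the functional equations and reduces to the conjectured product at $z=1$; producing such candidates is precisely the content of the paper's Conjectures~\ref{Con_Shun2} and \ref{Con_Cn1Dn2-qseries}, which remain open except for $k\leq 2$ when $n=2$ (Theorem~\ref{Thm_kis2}) --- and even there the final step from the multisum at $z=1$ to the theta-product is itself an unproven Rogers--Ramanujan-type identity. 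Your fall-back via perfect crystals faces the same obstruction in a different guise: the Dousse--Konan method yields the principal specialisation, whereas the specialisation $\varphi_n$ needed here is non-standard (it sends $\Exp{-\alpha_0}$ and $\Exp{-\alpha_n}$ to $-1$), and no crystal-theoretic interpretation of that specialisation is currently available. In short, the architecture you propose is sound and matches the authors' partial progress, but the two load-bearing inputs --- a $z$-refined product/multisum and its verification against the functional equations --- are exactly the missing pieces that keep the statement conjectural.
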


Primc does not actually state what the product on the right should be.
His conjecture simply is that the generating function of partitions in
$\D_{k_0,\dots,k_n}$ such that $k_0+\dots+k_n=k$ 
can be expressed as an infinite product with modulus $2k+2n$.

Since $\D_{k_0,k_1}$ is the set of ordinary partitions into even parts
such that no part occurs more than $k_0+k_1=k$ times,
\[
\sum_{\substack{\la\in\D_{k_0,k_1} \\[1pt]\la_1\leq 2\ell}} 
\Big(\frac{z}{q^2}\Big)^{l(\la)} q^{\abs{\la}}
=\frac{(z^{k+1};q^{2k+2})_{\ell}}{(z;q^2)_{\ell}}.
\]
Taking the $\ell\to\infty$ limit and setting $z=q^2$ gives
\[
\sum_{\la\in\D_{k_0,k_1}} q^{\abs{\la}}
=\frac{(q^{2k+2};q^{2k+2})_{\infty}}{(q^2;q^2)_{\infty}},
\]
proving Conjecture~\ref{Con_Dn2} for $n=1$.
Unlike Conjecture~\ref{Con_Cn1} however, the case of three-row
frequency arrays (i.e., $n=2$) appears to still be open.

For $a\in\{0,1,\dots,n\}$, let
\[
\D_a^{(n)}(N):=\D_{0^a,1,0^{n-a}}(N)
\quad\text{and}\quad
\D_a^{(n)}:=\bigcup_{N\geq 0} \D_a^{(n)}(N).
\]
Then Conjecture~\ref{Con_Dn2} for $k=1$ is
\begin{equation}\label{Eq_Dk1}
\sum_{\la\in\D_a^{(n)}} q^{\abs{\la}}= 
\frac{(q^{2a+1},q^{2n-2a+1},q^{2n+2};q^{2n+2})_{\infty}}{(q;q)_{\infty}},
\end{equation}
which again follows from the coloured partition theorem
\cite[Theorem 1.2]{JMS01} of Jing et al.
The correspondence between $\D_a^{(n)}$ and the set of
coloured partitions $C_N(M,r)$ considered in \cite{JMS01} is
\[
M=2n+2,\quad 
r=\begin{cases} 2a+1 & \text{for $0\leq a\leq \floor{n/2}$}, \\
2n-2a+1 & \text{for $\ceil{n/2}\leq a\leq n$},
\end{cases}
\]
where in the second case the colour labelling of \cite{JMS01} needs to be
reversed.
Alternatively, if
\[
D_a^{(n)}(z,q):=\sum_{\la\in\D_a^{(n)}} z^{l(\la)}q^{\abs{\la}},
\]
then \cite{Russell23}
\begin{equation}\label{Eq_D-F}
D_a^{(n)}(z,q)=\begin{cases}
F^{(n)}_{2a,0}(z,q) & \text{for $0\leq a\leq\floor{n/2}$}, \\[2mm]
F^{(n)}_{2n-2a,0}(z,q) & \text{for $\ceil{n/2}\leq a\leq n$}.
\end{cases}
\end{equation}
Specialising $z=1$ and again appealing to Bressoud's
Rogers--Ramanujan-type identities for even moduli
\cite{Bressoud79,Bressoud80} implies \eqref{Eq_Dk1}.

By Proposition~\ref{Prop_Dn} with $\la_i=k_i+\dots+k_n$ for
$0\leq i\leq n$, Primc's conjecture is expressible in terms of the
affine Lie algebra $\mathrm{D}^{(2)}_{n+1}$ in the following manner,
resolving one of the open problems from~\cite{Primc24}.

\begin{conjecture}
For nonnegative integers $k_0,\dots,k_n$, let $L(\La)$ be the
$\mathrm{D}^{(2)}_{n+1}$-standard module of highest weight
$\La=k_0\Lap_0+k_1\Lap_1+\dots+k_n\Lap_n\in P_{+}^{2(k_0+\dots+k_n)}$.
Then
\[
\sum_{\la\in\D_{k_0,\dots,k_n}}q^{\abs{\la}}=\varphi_n(\chi_{\La}).
\]
\end{conjecture}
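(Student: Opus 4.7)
The plan is to derive this conjecture directly from Primc's Conjecture~\ref{Con_Dn2} by identifying $\varphi_n(\chi_{\La})$ with the infinite product appearing on the right-hand side of that conjecture via the specialisation formula \eqref{Eq_D2n-product}. In effect, the statement is a representation-theoretic repackaging of Primc's combinatorial claim, and the work consists entirely of matching parameters and verifying that the two products coincide.

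First I would fix the parametrisation. Setting $\la_i := k_i + k_{i+1} + \dots + k_n$ for $1 \leq i \leq n$ (so that $k_i = \la_i - \la_{i+1}$ with $\la_{n+1}:=0$, and $k_0 = k - \la_1$ where $k := k_0 + \dots + k_n$), one obtains the $\mathrm{D}^{(2)}_{n+1}$-analog of \eqref{Eq_La-para2}:
\[
\La = (k-\la_1)\Lap_0 + (\la_1 - \la_2)\Lap_1 + \dots + (\la_{n-1}-\la_n)\Lap_{n-1} + \la_n \Lap_n.
\]
Since the $\mathrm{D}^{(2)}_{n+1}$-comarks are $(1,2,\dots,2,1)$, we have $\lev(\Lap_i) = 2$ for every $i \in I$, and hence $\lev(\La) = 2k$, consistent with the hypothesis $\La \in P_{+}^{2(k_0 + \dots + k_n)}$.

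Next I would substitute these $\la_i$ into \eqref{Eq_D2n-product}, which (by Proposition~\ref{Prop_Dn} in the appendix) expresses $\varphi_n(\chi_{\La})$ as
\[
\frac{(q^{2k+2n};q^{2k+2n})_{\infty}^n}{(q^2;q^2)_{\infty}(q;q)_{\infty}^{n-1}}
\prod_{1\leq i<j\leq n} \theta\big(q^{\la_i-\la_j-i+j},\, q^{\la_i+\la_j+2n-i-j+1};\, q^{2k+2n}\big).
\]
After the substitution $\la_i = k_i + \dots + k_n$, this is verbatim the product on the right-hand side of Conjecture~\ref{Con_Dn2}. Invoking that conjecture then delivers the identity $\sum_{\la \in \D_{k_0,\dots,k_n}} q^{|\la|} = \varphi_n(\chi_{\La})$.

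The substantive content is therefore carried by two ingredients: the specialisation formula \eqref{Eq_D2n-product}, whose Weyl--Kac derivation is spelled out in the appendix and builds on \cite{GOW16}; and Primc's combinatorial Conjecture~\ref{Con_Dn2}, which remains open in general (only the two-row case $n=1$ is settled). The main obstacle -- indeed the only thing separating the present statement from a theorem -- is Primc's conjecture itself, which at present is established only in very restricted cases. The value of the reformulation above lies in the fact that any future proof of Conjecture~\ref{Con_Dn2} will \emph{automatically} give a Lie-algebraic realisation of the non-standard specialisation $\varphi_n$ of characters of $\mathrm{D}^{(2)}_{n+1}$-standard modules, resolving the open problem raised in \cite{Primc24}.
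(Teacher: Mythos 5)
Your proposal matches the paper's own justification exactly: the paper introduces this conjecture as a restatement of Primc's Conjecture~\ref{Con_Dn2} obtained by substituting $\la_i=k_i+\dots+k_n$ into the non-standard specialisation formula of Proposition~\ref{Prop_Dn} (equation \eqref{Eq_D2n-product}) and observing that the two infinite products coincide. Your parameter matching, level computation via the comarks $(1,2,\dots,2,1)$, and identification of the remaining obstacle (Primc's conjecture itself, open beyond $n=1$) are all accurate.
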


In the following a number of functional equations satisfied by the
coloured partitions of type $\mathrm{C}^{(1)}_n$ and
$\mathrm{D}^{(2)}_{n+1}$ will be considered.
To describe these we define 
\begin{align*}
\mathcal{C}^{(n)}_{k_0\Lap_0+\dots+k_n\Lap_n}(z)=
\mathcal{C}^{(n)}_{k_0\Lap_0+\dots+k_n\Lap_n}(z,q)&:=
\sum_{\la\in\C_{k_0,\dots,k_n}} z^{l(\la)} q^{\abs{\la}}, \\
\mathcal{D}^{(n)}_{k_0\Lap_0+\dots+k_n\Lap_n}(z)=
\mathcal{D}^{(n)}_{k_0\Lap_0+\dots+k_n\Lap_n}(z,q)&:=
\sum_{\la\in\D_{k_0,\dots,k_n}} z^{l(\la)} q^{\abs{\la}},
\end{align*}
so that $C^{(n)}_a(z,q)=\mathcal{C}^{(n)}_{\La_a}(z,q)$ and
$D^{(n)}_a(z,q)=\mathcal{D}^{(n)}_{\La_a}(z,q)$.
Since the set of ``$\mathrm{D}^{(2)}_2$-partitions'' 
$\D_{k_0,k_1}$ only depends on the sum of $k_1$ and $k_2$,
this also applies to $\mathcal{D}^{(1)}_{k_0\Lap_0+k_1\Lap_1}(z)$.
By the $\mathbb{Z}_2$-symmetry of the $\mathrm{C}^{(1)}_n$ and
$\mathrm{D}^{(2)}_{n+1}$-partitions,
\begin{subequations}\label{Eq_automorphism}
\begin{align}
\mathcal{C}^{(n)}_{k_0\Lap_0+k_1\Lap_1+\dots+k_n\Lap_n}(z)=
\mathcal{C}^{(n)}_{k_n\Lap_0+\dots+k_1\Lap_{n-1}+k_0\Lap_n}(z), \\
\mathcal{D}^{(n)}_{k_0\Lap_0+k_1\Lap_1+\dots+k_n\Lap_n}(z)=
\mathcal{D}^{(n)}_{k_n\Lap_0+\dots+k_1\Lap_{n-1}+k_0\Lap_n}(z),
\label{Eq_automorphism-D}
\end{align}
\end{subequations}
reflecting the diagram automorphisms of the corresponding Dynkin diagrams.

\begin{proposition}\label{Prop_fun-CDa}
For $a,k,n$ integers such that $0\leq a\leq k$,
\begin{subequations}
\begin{align}\label{Eq_fun-CD1}
\mathcal{C}^{(n)}_{a\Lap_0+(k-a)\Lap_n}(z)&=
\sum_{i=0}^a \sum_{j=0}^{k-a}
(zq)^{i+j}\,\mathcal{D}^{(n+1)}_
{i\Lap_0+(a-i)\Lap_1+(k-a-j)\Lap_n+j\La_{n+1}}(zq), \\
\mathcal{D}^{(n+1)}_{a\Lap_0+(k-a)\Lap_{n+1}}(z)&=
\mathcal{C}^{(n)}_{a\Lap_0+(k-a)\La_n}(zq), 
\label{Eq_fun-CD2}
\end{align}
where $n\geq 1$ in \eqref{Eq_fun-CD1} and $n\geq 0$ in 
\eqref{Eq_fun-CD2}.
\end{subequations}
\end{proposition}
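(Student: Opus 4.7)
The plan is to prove both claims by direct bijections on the respective frequency arrays, in the style of the proof of Proposition~\ref{Prop_fun}. The two arrays in question---the $\C$-array (case~(a')) for $\C^{(n)}$ and the $\D$-array (case~(b')) for $\D^{(n+1)}$---both have $m = 2n+1$ rows but with complementary parity constraints: $f_i^{(c)}$-entries live at cells with $c+i$ even in the $\C$-case and $c+i$ odd in the $\D$-case. The operation that toggles these two parity conditions is the shift-by-one $\la_r = \mu_r + 1$, i.e., $f_{i+1}^{(c)}(\la) = f_i^{(c)}(\mu)$ for all $i \geq 1$, and this is the main ingredient of both bijections.

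For \eqref{Eq_fun-CD2}, which is the cleaner of the two, I would set up the shift bijection directly. Under $\mu \mapsto \la$ defined by $\la_r = \mu_r + 1$, the $\C$-boundary values $f_{-1}^{(2c+1)}(\mu) = k_c$ (equal to $a$ for $c = 0$, $k-a$ for $c = n$, and $0$ otherwise) become the $\D$-boundary values $f_0^{(2c+1)}(\la) = k_c$, producing the required weight $a\Lap_0 + (k-a)\Lap_{n+1}$; a $\mu$-path at positions $(i_1, \dots, i_{2n+1})$ shifts to a $\la$-path at positions $(i_1+1, \dots, i_{2n+1}+1)$ with the same value sum, so the constraint $\leq k$ transfers. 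For the inverse map, one must check that $\la_r \geq 2$ for all $r$, equivalently $f_1^{(c)}(\la) = 0$ for all even $c$; this follows from the path that takes $j_c = 1$, $j_r = -1$ at the other even $r$, and $j_r = 0$ at odd $r$, which already accumulates $a + (k-a) = k$ from boundary alone. Since $l(\la) = l(\mu)$ and $|\la| = |\mu| + l(\mu)$, one has $z^{l(\la)} q^{|\la|} = (zq)^{l(\mu)} q^{|\mu|}$, which upon summation yields \eqref{Eq_fun-CD2}.

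For \eqref{Eq_fun-CD1}, I would decompose each $\mu \in \C_{a, 0^{n-1}, k-a}$ according to the pair $(i, j) := (f_1^{(1)}(\mu), f_1^{(2n+1)}(\mu))$. Admissibility bounds $0 \leq i \leq a$ and $0 \leq j \leq k-a$; moreover, the path that takes $i_c = 1$ at a single interior odd $c \in \{3, 5, \dots, 2n-1\}$ and visits only boundary values elsewhere has sum $k + f_1^{(c)}$, forcing $f_1^{(c)}(\mu) = 0$ for each such $c$. Hence the residual $\mu^1$ obtained from $\mu$ by removing the $i + j$ visible size-one parts has no parts of size one at all, and the same shift $\la_r = \mu^1_r - 1$ produces a valid candidate $\la$. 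The $\C$-boundary mass $a$ at row $1$ splits as $a = i + (a-i)$ to populate the $\D$-boundary values $f_0^{(1)}(\la) = i$ and $f_{-1}^{(2)}(\la) = a - i$, and analogously $k - a$ at row $2n+1$ splits as $j + (k-a-j)$, so that $\la$ lies in the set with weight $i\Lap_0 + (a-i)\Lap_1 + (k-a-j)\Lap_n + j\Lap_{n+1}$ as required. The contribution $z^{l(\mu)} q^{|\mu|} = (zq)^{i+j} (zq)^{l(\la)} q^{|\la|}$ summed over $(i, j)$ then yields \eqref{Eq_fun-CD1}.

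The hard part is verifying the equivalence of admissibility between $\mu$ and $\la$ in the second step: a $\mu$-path visiting the now-visible size-one part $f_1^{(c)} = i$ at $c \in \{1, 2n+1\}$ must match, via the shift, a $\la$-path with $j_c = 0$ picking up the new $\D$-boundary $i$ at that endpoint, while a $\mu$-path visiting $\mu$'s original odd-row boundary $i_c = -1$ at $c \in \{1, 2n+1\}$ matches a $\la$-path that dips to $j_{c\pm 1} = -1$ and picks up the $\D$-boundary $a - i$ or $k - a - j$ at the adjacent even row. This case-by-case path matching, in the spirit of the encircled-vertex manipulations in the proof of Proposition~\ref{Prop_fun}, is the principal bookkeeping obstacle, but the total number of boundary-visit patterns on the first few columns is finite and the verification is routine.
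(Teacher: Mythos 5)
Your proof follows essentially the same route as the paper's: for \eqref{Eq_fun-CD2} the shift-by-one bijection together with the zigzag path forcing $f_1^{(c)}=0$ on the even rows, and for \eqref{Eq_fun-CD1} the decomposition by $(i,j)=(f_1^{(1)},f_1^{(2n+1)})$, the splitting of the boundary masses $a=i+(a-i)$ and $k-a=j+(k-a-j)$ onto the $\D$-boundary, and the same weight bookkeeping; the paper likewise leaves the path-matching verification at the level of an assertion that the two arrays admit the same set of admissible paths. The one point you omit is the degenerate case $n=1$ of \eqref{Eq_fun-CD1}, where the rows carrying $f_{-1}^{(2)}=a-i$ and $f_{-1}^{(2n)}=k-a-j$ coincide, so the single middle-row boundary vertex receives the $a$-independent value $k-i-j$; the paper spends a paragraph justifying that the summation ranges $0\leq i\leq a$ and $0\leq j\leq k-a$ must then be enforced from the $\C$-side (where they follow from admissibility of $\mu$) rather than recovered from the $\D$-array, and your argument as written silently assumes these two vertices are distinct.
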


For $n=0$ the functional equation \eqref{Eq_fun-CD2} is independent of
the choice of $a$.
Moreover, for $n>0$ the range of $a$ may be restricted to
$0\leq a\leq \floor{k/2}$ due to the symmetry \eqref{Eq_automorphism}.

For the low-rank cases $\mathcal{C}^{(1)}_{\La}(z)$ and
$\mathcal{D}^{(2)}_{\La}(z)$ additional equations holds.
Since by \eqref{Eq_fun-CD2}
\begin{equation}\label{Eq_CDn2}
\mathcal{C}^{(1)}_{a\Lap_0+(k-a)\La_1}(z)=
\mathcal{D}^{(2)}_{a\Lap_0+(k-a)\Lap_{2}}(z/q),
\end{equation}
it suffices to consider functional equations for
$\mathcal{D}^{(2)}_{\La}(z)$.

\begin{proposition}\label{Prop_fun-CDb}
For $a,b,k$ nonnegative integers such that $a+b\leq k-1$,
\begin{align}\label{Eq_nis2}
&\mathcal{D}^{(2)}_{a\Lap_0+(k-a-b)\Lap_1+b\Lap_2}(z)-
\mathcal{D}^{(2)}_{(a+1)\Lap_0+(k-a-b-1)\Lap_1+b\Lap_2}(z) \\
&\quad =\sum_{i=0}^a \sum_{j=0}^{k-a} (zq)^{k+i-a+\min\{0,j-b\}} q^{i+j}
\mathcal{D}^{(2)}_{i\Lap_0+(k-i-j)\Lap_1+j\Lap_2}(zq^2). \notag
\end{align}
\end{proposition}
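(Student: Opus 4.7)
The plan is to adapt the frequency-array manipulation technique used in the proof of Proposition~\ref{Prop_fun} to the $\mathrm{D}^{(2)}_3$ setting (case~(b') with $n=2$), where the first data column carries only the single entry $f_1^{(2)}$ and the second carries $(f_2^{(1)},f_2^{(3)})$. First I would interpret the LHS combinatorially. Writing $\D=\D_{a,k-a-b,b}$ and $\D'=\D_{a+1,k-a-b-1,b}$, a path-by-path comparison of admissibility constraints shows that changing the boundary from $(a,k-a-b,b)$ to $(a+1,k-a-b-1,b)$ tightens only the bound $f_1^{(2)}\leq k-a-b$ (via the path $(k_0,f_1^{(2)},k_2)$) and loosens only the bound $f_2^{(1)}\leq a$ (via $(f_2^{(1)},k_1,k_2)$), all other constraints being preserved. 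Hence
\[
\D\setminus\D'=\{\lambda\in\D:f_1^{(2)}=k-a-b\},\qquad
\D'\setminus\D=\{\lambda\in\D':f_2^{(1)}=a+1\},
\]
so the LHS equals $\mathrm{GF}(\D\setminus\D')-\mathrm{GF}(\D'\setminus\D)$.

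Next I would decompose each of these sets by the triple $(f_2^{(1)},f_1^{(2)},f_2^{(3)})$ occupying the first two data columns and extract the contribution of those columns to the generating function. This produces
\begin{align*}
\mathrm{GF}(\D\setminus\D')&=(zq)^{k-a-b}\sum_{i=0}^{a}\sum_{j=0}^{b}(zq^2)^{i+j}\,T(i,j),\\
\mathrm{GF}(\D'\setminus\D)&=\sum_{j=0}^{b}(zq^2)^{a+1+j}\sum_{m=0}^{k-a-b-1}(zq)^{m}\,T(a+1,j),
\end{align*}
where $T(i,j)$ denotes the generating function of the remaining columns (indices $\geq 3$), constrained by the paths that pass through the now-fixed values $(f_2^{(1)},f_2^{(3)})=(i,j)$.

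The heart of the proof would then be an encircled-swap argument of the type used for Eq.~\eqref{Eq_fun2}, adapted to the $\mathrm{D}^{(2)}_{n+1}$ array with its boundary data at \emph{both} ends ($c=1$ and $c=2n-1$). The goal is to show that, after appropriate swaps of boundary-like vertices at column indices $0$--$2$, each contribution reassembles into a shifted $\mathcal{D}^{(2)}_{i\Lap_0+(k-i-j)\Lap_1+j\Lap_2}(zq^2)$. The $\D\setminus\D'$ piece, whose factor $(zq)^{k-a-b}(zq^2)^{i+j}$ equals $(zq)^{k+i-a+(j-b)}q^{i+j}$, accounts for the terms with $j\leq b$ (where $\min\{0,j-b\}=j-b$); the $-\mathrm{GF}(\D'\setminus\D)$ piece, after a swap that trades the anomalous value $f_2^{(1)}=a+1$ for a new boundary value $k_2^{\ast}=j$ exceeding~$b$, produces the terms with $j>b$ (where $\min\{0,j-b\}=0$) and weight $(zq)^{k+i-a}q^{i+j}$. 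Summation over $(i,j)$ then matches the RHS.

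The main obstacle is the swap step itself. The tail constraints carried by $T(i,j)$ do not automatically include $f_2^{(1)\ast}\leq i$ and $f_2^{(3)\ast}\leq j$, whereas a genuine $\D_{i,k-i-j,j}$-partition requires precisely these extra bounds at its boundary. Verifying that the swap manoeuvre installs these missing bounds exactly—neither introducing nor omitting any partition—requires a careful case analysis. This asymmetry is precisely what distinguishes $j\leq b$ from $j>b$: in the former case the bound $f_2^{(3)\ast}\leq j$ arises naturally from the path $(f_4^{(1)},f_3^{(2)},f_2^{(3)})$ with $f_2^{(3)}=j$, while in the latter case an extra vertex from the discarded $f_2^{(1)}=a+1$ must be repurposed to generate it, explaining why the weight acquires no further $j-b$ correction.
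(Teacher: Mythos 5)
There is a genuine gap, and it occurs at the very first step. In the (b') model with $n=2$ the boundary vertex $k_1=f_{-1}^{(2)}$ can only appear in the single path $(f_0^{(1)},f_{-1}^{(2)},f_0^{(3)})=(k_0,k_1,k_2)$, whose sum is identically $k$: your purported path $(f_2^{(1)},k_1,k_2)$ violates the adjacency condition $\abs{i_r-i_{r+1}}=1$ (here $\abs{2-(-1)}=3$), so there is no bound ``$f_2^{(1)}\leq a$'' to loosen. Writing $(l,i,j)=(f_1^{(2)},f_2^{(1)},f_2^{(3)})$, the boundary-dependent constraints for $\D=\D_{a,k-a-b,b}$ are $l\leq k-a-b$, $i+l\leq k-b$ and $j+l\leq k-a$, while for $\D'=\D_{a+1,k-a-b-1,b}$ they are $l\leq k-a-b-1$, $i+l\leq k-b$ and $j+l\leq k-a-1$; every constraint is tightened or unchanged, so $\D'\subseteq\D$ and $\D'\setminus\D=\emptyset$. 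Your two-sided decomposition therefore cannot be correct (and indeed the subtracted piece would have to be a generating function with negative coefficients to make the totals match). Moreover $\D\setminus\D'$ is strictly larger than $\{f_1^{(2)}=k-a-b\}$: a partition of $\D$ fails to lie in $\D'$ also when $j+l=k-a$ with $l<k-a-b$ (equivalently $j>b$), and it is precisely these partitions that produce the $j>b$ terms with $\min\{0,j-b\}=0$ and weight $(zq)^{k+i-a}$ on the right-hand side. So the split into $j\leq b$ versus $j>b$ comes from the two ways membership of $\D'$ can fail, not from a signed difference of two sets.

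The remainder of your outline does not repair this, and by your own account the key ``swap'' step is left unverified. For comparison, the paper's argument needs no swap at all: on the difference set one fixes $(l,i,j)$, which is forced to satisfy either ($l=k-a-b$, $0\leq j<b$) or ($l=k-a-j$, $b\leq j\leq k-a$); one then deletes the two boundary columns and replaces the entry $l$ in the next column by $k-i-j$, so that the new boundary triple $(i,k-i-j,j)$ sums to $k$ and imposes on the tail exactly the constraints that the old third and fourth columns already imposed (e.g.\ the old path $(i,f_3^{(2)},j)$ becomes the new path $(k_0',f_1^{(2)\prime},k_2')$). The prefactor $(zq)^{l}(zq^2)^{i+j}$ then gives $(zq)^{k+i-a+j-b}q^{i+j}$ in the first case and $(zq)^{k+i-a}q^{i+j}$ in the second, which is the claimed right-hand side. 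I recommend redoing the admissibility analysis of the boundary paths before attempting any column manipulations.
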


Not all of the equations in Proposition~\ref{Prop_fun-CDb} are
linearly independent.
Assuming that $a+b\leq k-2$ and taking the difference between
\eqref{Eq_nis2} and that same equation with $b$ replaced by $b+1$
gives
\begin{align*}
&\sum_{i,j=0}^1 (-1)^{i+j} 
\mathcal{D}^{(2)}_{(i+a)\Lap_0+(k-i-j-a-b)\Lap_1+(j+b)\Lap_2}(z) \\
&\quad =-(1-zq) (zq)^{k-a-b-1} \sum_{i=0}^a \sum_{j=0}^b (zq^2)^{i+j} 
\mathcal{D}^{(2)}_{i\Lap_0+(k-i-j)\Lap_1+j\Lap_2}(zq^2).
\end{align*}
By \eqref{Eq_automorphism} this is invariant under the
interchange of $a$ and $b$, whereas \eqref{Eq_nis2} is not.
If, symbolically, we write the equation \eqref{Eq_nis2} as $e_{a,b}$,
this implies that $e_{a,b}-e_{a,b+1}=e_{b,a}-e_{b,a+1}$.
Hence, of the $\binom{k+1}{2}$ equations of the proposition, there are
only
\[
\binom{k+1}{2}-\sum_{\substack{0\leq a<b \\[1pt] a+b\leq k-2}}1
=\binom{k+1}{2}-\Floor{\frac{(k-1)^2}{4}}=
\Floor{\frac{(k+2)^2}{4}}-1
\]
linearly independent equations.
This should be further combined with the $n=1$ case of
Proposition~\ref{Prop_fun-CDa}, which by \eqref{Eq_CDn2} may be stated 
as
\begin{equation}\label{Eq_fun-D2}
\mathcal{D}^{(2)}_{a\Lap_0+(k-a)\Lap_2}(z)=
\sum_{i=0}^a \sum_{j=0}^{k-a}
(zq^2)^{i+j}\,\mathcal{D}^{(2)}_{i\Lap_0+(k-i-j)\Lap_1+j\La_2}(zq^2)
\end{equation}
for $0\leq a\leq k$. 
As mentioned previously, this gives a further $\floor{k/2}+1$ independent
equations.
However, if we take the sum of \eqref{Eq_nis2} for $b=k-a-1$ and 
\eqref{Eq_fun-D2} with $a\mapsto a+1$, we find
\begin{align*}
\mathcal{D}^{(2)}_{a\Lap_0+\Lap_1+(k-a-1)\Lap_2}(z)
&=(1+zq)\sum_{i=0}^a \sum_{j=0}^{k-a-1} (zq^2)^{i+j}
\mathcal{D}^{(2)}_{i\Lap_0+(k-i-j)\Lap_1+j\Lap_2}(zq^2) \\
&\quad +\sum_{i=0}^a (zq^2)^{k+i-a} 
\mathcal{D}^{(2)}_{i\Lap_0+(a-i)\Lap_1+(k-a)\Lap_2}(zq^2) \\
&\quad +\sum_{i=0}^{k-a-1} (zq^2)^{i+a+1}\,
\mathcal{D}^{(2)}_{(a+1)\Lap_0+(k-a-i-1)\Lap_1+i\La_2}(zq^2)
\end{align*}
for $0\leq a\leq k-1$.
Since by \eqref{Eq_automorphism} this is is invariant under
the substitution $a\mapsto k-a-1$, we have an additional 
$\floor{k/2}$ dependencies.
Equations \eqref{Eq_nis2} and \eqref{Eq_fun-D2} combined thus give
\[
\Floor{\frac{(k+2)^2}{4}}-1+\bigg(\Floor{\frac{k}{2}}+1\bigg)
-\Floor{\frac{k}{2}}=\Floor{\frac{(k+2)^2}{4}}
\]
linearly independent equations.
This is the exact same number as weights of the form
$k_0\Lap_0+k_1\Lap_1+k_2\Lap_2$ such that $k_0+k_1+k_2=k$ and 
$k_2\leq k_0$, allowing us the conclude the following result.

\begin{lemma}
Let $k$ be a positive integer.
Subject to the initial conditions $\mathcal{D}^{(2)}_{\La}(0)=1$,
the functional equations 
\eqref{Eq_nis2} and \eqref{Eq_fun-D2} combined with the
symmetry relation \eqref{Eq_automorphism}
uniquely determine the set of generating functions 
$\{\mathcal{D}^{(2)}_{\La}(z,q)\}_{\La\in P_{+}^{2k}}$,
where $P_{+}^{2k}$ is the set of level-$2k$ dominant integral 
weights of $\mathrm{D}^{(2)}_3$.
\end{lemma}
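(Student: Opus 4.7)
The plan is to verify uniqueness coefficient-by-coefficient in $z$ and induct on the $z$-degree. For $\La = k_0\Lap_0 + k_1\Lap_1 + k_2\Lap_2$ with $k_0 + k_1 + k_2 = k$, write $\tilde{u}_{k_0, k_1, k_2} := [z^m]\,\mathcal{D}^{(2)}_{\La}(z,q) \in \mathbb{Z}[[q]]$. By linearity it suffices to show that the \emph{homogeneous} system obtained from \eqref{Eq_nis2}, \eqref{Eq_fun-D2}, and \eqref{Eq_automorphism-D} (with all coefficients $[z^{m'}]$ for $m' < m$ set to zero as inductive hypothesis) admits only the zero solution. The base case $m = 0$ is handled directly by the normalisation $\mathcal{D}^{(2)}_{\La}(0) = 1$.

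First I extract $[z^m]$ from \eqref{Eq_nis2}. Each term on its right-hand side carries the prefactor $(zq)^{k+i-a+\min\{0, j-b\}}$, whose exponent is at least $k-a-b \geq 1$ by the hypothesis $a + b \leq k-1$, so every contribution vanishes modulo the inductive hypothesis. The equation collapses to the \emph{chain relations}
\[
\tilde{u}_{a, k-a-b, b} = \tilde{u}_{a+1, k-a-b-1, b} \qquad (a, b \geq 0,\ a+b \leq k-1).
\]
Analogously, \eqref{Eq_fun-D2} reduces to
\[
\tilde{u}_{a, 0, k-a} = q^{2m}\, \tilde{u}_{0, k, 0} \qquad (0 \leq a \leq k),
\]
since all summands with $(i,j) \neq (0,0)$ contribute a strictly positive power of $z$, while the $(i,j) = (0,0)$ term yields $[z^m]\mathcal{D}^{(2)}_{k\Lap_1}(zq^2) = q^{2m}\tilde{u}_{0, k, 0}$.

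I next claim that in the homogeneous problem every $\tilde{u}_{k_0, k_1, k_2}$ coincides with $\tilde{u}_{0, k, 0}$. Indeed, the chain with $b=0$ telescopes to $\tilde{u}_{0, k, 0} = \tilde{u}_{1, k-1, 0} = \cdots = \tilde{u}_{k, 0, 0}$; the symmetry \eqref{Eq_automorphism-D} then yields $\tilde{u}_{0, k-b, b} = \tilde{u}_{b, k-b, 0} = \tilde{u}_{0, k, 0}$, and the chain at each fixed $b$ propagates this equality to all $\tilde{u}_{a, k-a-b, b}$; the lone value $\tilde{u}_{0, 0, k}$ not lying in any chain is dispatched by $\tilde{u}_{0, 0, k} = \tilde{u}_{k, 0, 0} = \tilde{u}_{0, k, 0}$ via the symmetry together with the $b=0$ chain. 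Plugging this into \eqref{Eq_fun-D2} at $a = 0$ gives $(1 - q^{2m})\tilde{u}_{0, k, 0} = 0$. Since $1 - q^{2m}$ has constant term $1$ for $m \geq 1$, it is a unit in $\mathbb{Z}[[q]]$, whence $\tilde{u}_{0, k, 0} = 0$ and thus all $\tilde{u}_{k_0, k_1, k_2} = 0$, completing the induction.

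The main obstacle is conceptual rather than computational: neither \eqref{Eq_nis2} nor \eqref{Eq_fun-D2} alone suffices — the former yields only differences $\tilde{u}_{\La} - \tilde{u}_{\La'}$ and the latter alone cannot close the system — and one must see how they couple through the symmetry to produce the single scalar equation $(1 - q^{2m})\tilde{u}_{0, k, 0} = 0$ whose solvability hinges on $1 - q^{2m}$ being a unit in $\mathbb{Z}[[q]]$ for $m \geq 1$. A minor nuisance is the boundary triple $(0, 0, k)$, where \eqref{Eq_nis2} provides no chain relation, and for which the symmetry must do the work of reconnecting it to the $b = 0$ chain. The dependency count of $\lfloor(k+2)^2/4\rfloor$ given in the paragraphs preceding the lemma ensures that the surviving independent equations are precisely enough to carry out this elimination without overdetermination.
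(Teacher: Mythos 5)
Your proof is correct, and it takes a genuinely different (and in a sense more complete) route than the paper. The paper does not give a formal proof of this lemma at all: it deduces the statement from the dimension count carried out in the preceding paragraphs, showing that \eqref{Eq_nis2} and \eqref{Eq_fun-D2} supply exactly $\lfloor(k+2)^2/4\rfloor$ linearly independent functional equations, which matches the number of weights with $k_2\leq k_0$. That argument leaves implicit the step from ``as many independent equations as unknowns'' to ``the order-by-order recursion in $z$ is nonsingular''. You make precisely this step explicit: extracting $[z^m]$ and using that every right-hand-side term carries a factor $z^{k+i-a+\min\{0,j-b\}}$ with exponent at least $k-a-b\geq 1$ (resp.\ $(zq^2)^{i+j}$ with $i+j\geq 1$ except for $(i,j)=(0,0)$), you reduce the homogenised system to the chain relations $\tilde u_{a,k-a-b,b}=\tilde u_{a+1,k-a-b-1,b}$ together with $\tilde u_{a,0,k-a}=q^{2m}\tilde u_{0,k,0}$, and the coupling through the diagram symmetry collapses everything to $(1-q^{2m})\tilde u_{0,k,0}=0$, with $1-q^{2m}$ a unit in $\mathbb{Z}[[q]]$ for $m\geq 1$. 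All the individual reductions check out, including the treatment of the isolated weight $k\Lap_2$ via the symmetry and the $b=0$ chain. What your approach buys is a self-contained uniqueness proof that does not rely on the linear-independence bookkeeping; what the paper's approach buys is the sharper structural information that the listed equations are not merely sufficient but form a minimal independent set.
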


\begin{proof}[Proof of Proposition~\ref{Prop_fun-CDa}]
Since the proof is very similar to that of Proposition~\ref{Prop_fun},
we will only give a minimal amount of detail.

A necessary condition for $\la$ to be in $\C_{a,0^{n-1},k-a}$ is
for the first five columns of its frequency array to be of the form as
shown in the following (partial) frequency array on the left:
\tikzmath{\y=11;}
\begin{center}
\begin{tikzpicture}[scale=0.43,line width=0.3pt]
\draw (0,10) node {$\sc k-a$};
\draw (0,8) node {$\sc 0$};
\draw (0,5.25) node {$\vdots$};
\draw (0,2) node {$\sc 0$};
\draw (0,0) node {$\sc a$};
\draw (\x,1) node {$\sc 0$};
\draw (\x,3) node {$\sc 0$};
\draw (\x,5.25) node {$\vdots$};
\draw (\x,7) node {$\sc 0$};
\draw (\x,9) node {$\sc 0$};
\draw (2*\x,0) node {\red{$\sc i$}};
\draw (2*\x,2) node {\pink{$\sc 0$}};
\draw (2*\x,5.25) node {$\vdots$};
\draw (2*\x,8) node {\orange{$\sc 0$}};
\draw (2*\x,10) node {\green{$\sc j$}};
\draw (3*\x,1) node {\blue{$\sc f_2^{(2)}$}};
\draw (3*\x,3) node {\yg{$\sc f_2^{(4)}$}};
\draw (3*\x,5.25) node {$\vdots$};
\draw (3*\x,7) node {\purple{$\sc f_2^{(2n-2)}$}};
\draw (3*\x,9) node {\brown{$\sc f_2^{(2n)}$}};
\draw (4*\x,0) node {\red{$\sc f_3^{(1)}$}};
\draw (4*\x,2) node {\pink{$\sc f_3^{(3)}$}};
\draw (4*\x,5.25) node {$\vdots$};
\draw (4*\x,8) node {\orange{$\sc f_3^{(2n-1)}$}};
\draw (4*\x,10) node {\green{$\sc f_3^{(2n+1)}$}};
\draw (5.5*\x,4.5) node {$\mapsto$};
%%%%%%%%%%%%%%%
\draw (\y+0,10) node {$\sc 0$};
\draw (\y+0,8) node {$\sc 0$};
\draw (\y+0,5.25) node {$\vdots$};
\draw (\y+0,2) node {$\sc 0$};
\draw (\y+0,0) node {$\sc 0$};
\draw (\y+\x,1) node {$\sc a-i$};
\draw (\y+\x,3) node {$\sc 0$};
\draw (\y+\x,5.25) node {$\vdots$};
\draw (\y+\x,7) node {$\sc 0$};
\draw (\y+\x,9) node {$\sc k-a-j$};
\draw (\y+2*\x,0) node {\red{$\sc i$}};
\draw (\y+2*\x,2) node {\pink{$\sc 0$}};
\draw (\y+2*\x,5.25) node {$\vdots$};
\draw (\y+2*\x,8) node {\orange{$\sc 0$}};
\draw (\y+2*\x,10) node {\green{$\sc j$}};
\draw (\y+3*\x,1) node {\blue{$\sc f_2^{(2)}$}};
\draw (\y+3*\x,3) node {\yg{$\sc f_2^{(4)}$}};
\draw (\y+3*\x,5.25) node {$\vdots$};
\draw (\y+3*\x,7) node {\purple{$\sc f_2^{(2n-2)}$}};
\draw (\y+3*\x,9) node {\brown{$\sc f_2^{(2n)}$}};
\draw (\y+4*\x,0) node {\red{$\sc f_3^{(1)}$}};
\draw (\y+4*\x,2) node {\pink{$\sc f_3^{(3)}$}};
\draw (\y+4*\x,5.25) node {$\vdots$};
\draw (\y+4*\x,8) node {\orange{$\sc f_3^{(2n-1)}$}};
\draw (\y+4*\x,10) node {\green{$\sc f_3^{(2n+1)}$}};
%%%%%%%%%%%%%%%
\end{tikzpicture}
\end{center}
where, $i\in\{0,1,\dots,a\}$ and $j\in\{0,1,\dots,k-a\}$.
The exact same set of admissible paths arises by replacing this
by the (partial) frequency array shown on the right.
Now eliminating the first column and relabelling
$f_i^{(c)}$ (for $i+c$ even) as $f_{i-1}^{(c)}$, we end up with a
frequency array of a $\mathrm{D}^{(2)}_{n+2}$-partition $\la$ in 
$\D_{i,a-i,0,\dots,0,k-a-j,j}$.
The contribution of the full set of arrays of this form to the
generating function is
\[
(zq)^{i+j}\mathcal{D}^{(n+1)}_
{i\Lap_0+(a-i)\Lap_1+(k-a-j)\Lap_n+j\La_{n+1}}(zq).
\]
Adding all the contributions from $i\in\{0,1,\dots,a\}$ and
$j\in\{0,1,\dots,k-a\}$ results in the right-hand side of 
\eqref{Eq_fun-CD1}.
The $n=1$ case of the above proof requires some additional
justification since in the frequency array on the right of the above 
figure the two vertices in the second column labelled $a-i$
and $k-a-j$ coincide.
The obvious interpretation would be to simply sum these two labels
to give $k-i-j$. 
This, however, is $a$-independent and would therefore no longer require
that $i\leq a$ or $j\leq k-a$.
Hence it would lead to a larger range of admissible values for $i$ and $j$.
Given that we have summed the contributions of frequency arrays
of the type shown on the right over $0\leq i\leq a$ and $0\leq j\leq k-a$
irrespective of the value of $n$, the proof and hence the functional 
equation \eqref{Eq_fun-CD1} holds for all $n\geq 1$.

The functional equation \eqref{Eq_fun-CD2} is trivial since
the frequency array of an admissible $\mathrm{D}^{(2)}_{n+2}$-partition
such that $k_0=a$, $k_1=\dots=k_n=0$ and $k_{n+1}=k-a$ has only zeros in
its first and third columns.
Deleting the first column gives the frequency array of an
$\mathrm{C}^{(1)}_n$-partition
such that $k_0=a$, $k_1=\dots=k_{n-1}=0$ and $k_n=k-a$.
\end{proof}

\begin{proof}[Proof of Proposition~\ref{Prop_fun-CDb}]
We use $c:=k-a-b$ in the proof to better fit some of the vertex labels
used in the diagrams.
The first four columns of the frequency arrays of partitions
contributing to $\mathcal{D}^{(2)}_{a\Lap_0+c\Lap_1+b\Lap_2}(z)$ 
and $\mathcal{D}^{(2)}_{(a+1)\Lap_0+(c-1)\Lap_1+b\Lap_2}(z)$, respectively,
take the form
\smallskip
\tikzmath{\z=14;}
\begin{equation}
\raisebox{-0.5cm}{
\begin{tikzpicture}[
scale=0.4,line width=0.3pt]
\draw (0,1) node {$\sc c$};
\draw (\x,2) node {$\sc b$};
\draw (\x,0) node {$\sc a$};
\draw (2*\x,1) node {\blue{$\sc l$}};
\draw (3*\x,0) node {\red{$\sc i$}};
\draw (3*\x,2) node {\pink{$\sc j$}};
\draw (9.2,1) node {and};
\draw (\z,1) node {$\sc c-1$};
\draw (\x+\z,2) node {$\sc b$};
\draw (\x+\z,0) node {$\sc a+1$};
\draw (2*\x+\z,1) node {\blue{$\sc l$}};
\draw (3*\x+\z,0) node {\red{$\sc i$}};
\draw (3*\x+\z,2) node {\pink{$\sc j$}};
\end{tikzpicture}}
\end{equation}
where the triple $(\blue{f_1^{(2)}},\red{f_2^{(1)}},\pink{f_2^{(3)}})$ 
has been replaced by $(\blue{l},\red{i},\pink{j})$.
For admissibility, the array on the left requires that
\[
i,j\geq 0,\quad i+l\leq k-b,\quad j+l\leq k-a,\quad 
0\leq l\leq c,\quad i+j+l\leq k,
\]
and the array on the right that
\[
i,j\geq 0,\quad i+l\leq k-b,\quad j+l\leq k-a-1,\quad
0\leq l\leq c-1,\quad i+j+l\leq k.
\]
Hence, by taking the difference
\[
\mathcal{D}^{(2)}_{a\Lap_0+c\Lap_1+b\Lap_2}(z)-
\mathcal{D}^{(2)}_{(a+1)\Lap_0+(c-1)\Lap_1+b\Lap_2}(z),
\]
the only arrays that contribute are those on the left of the above 
figure with
\begin{subequations}
\begin{equation}\label{Eq_inequalities1}
i,j\geq 0,\quad i+l\leq k-b,\quad j+l<k-a,\quad l=c,\quad i+j+l\leq k
\end{equation}
or
\begin{equation}\label{Eq_inequalities2}
i,j\geq 0,\quad i+l\leq k-b,\quad j+l=k-a,\quad 0\leq l\leq c,\quad 
i+j+l\leq k.
\end{equation}
\end{subequations}
The first set of inequalities may be simplified to
\[
0\leq i\leq a,\quad 0\leq j<b,\quad l=c.
\]
In terms of frequency arrays this may by symbolically written as
\smallskip
\tikzmath{\z=14;}
\begin{center}
\begin{tikzpicture}[scale=0.4,line width=0.3pt]
\draw (-2.8,1) node {$\displaystyle \sum_{i=0}^a \sum_{j=0}^{b-1}$};
\draw (0,1) node {$\sc c$};
\draw (\x,2) node {$\sc b$};
\draw (\x,0) node {$\sc a$};
\draw (2*\x,1) node {\blue{$\sc c$}};
\draw (3*\x,0) node {\red{$\sc i$}};
\draw (3*\x,2) node {\pink{$\sc j$}};
\end{tikzpicture}
\end{center}
Now dropping the first two columns and replacing the $c$ in the third
column by $k-i-j$, so that the first two columns of the new array sum
to $k$, this yields
\smallskip
\tikzmath{\z=14;}
\begin{center}
\begin{tikzpicture}[scale=0.4,line width=0.3pt]
\draw (-3.7,1) node {$\displaystyle \sum_{i=0}^a \sum_{j=0}^{b-1}$};
\draw (0,1) node {\blue{$\sc k-i-j$}};
\draw (1*\x,0) node {\red{$\sc i$}};
\draw (1*\x,2) node {\pink{$\sc j$}};
\end{tikzpicture}
\end{center}
Since, by the statement of the lemma, $a+b\leq k-1$ it follows that
$k-i-j\geq 0$ as required.
The contribution to the generating function of \eqref{Eq_inequalities1} is
thus
\[
\sum_{i=0}^a \sum_{j=0}^{b-1} (zq)^{c+i+j} q^{i+j}
\mathcal{D}^{(2)}_{i\Lap_0+(k-i-j)\Lap_1+j\Lap_2}(zq^2).
\]
Similarly, the inequalities \eqref{Eq_inequalities2} may be
simplified to
\[
0\leq i\leq a,\quad b\leq j\leq k-a,\quad l=k-a-j.
\]
In terms of frequency arrays this may by symbolically written as
\smallskip
\tikzmath{\z=14;}
\begin{center}
\begin{tikzpicture}[scale=0.4,line width=0.3pt]
\draw (-2.8,1) node {$\displaystyle \sum_{i=0}^a \sum_{j=b}^{k-a}$};
\draw (0,1) node {$\sc c$};
\draw (\x,2) node {$\sc b$};
\draw (\x,0) node {$\sc a$};
\draw (2*\x,1) node {\blue{$\sc k-a-j$}};
\draw (3*\x,0) node {\red{$\sc i$}};
\draw (3*\x,2) node {\pink{$\sc j$}};
\end{tikzpicture}
\end{center}
Deleting the first two columns and replacing $k-a-j$ by $k-i-j$, so that
once again the first two columns of the new array sum to $k$, this yields
\smallskip
\tikzmath{\z=14;}
\begin{center}
\begin{tikzpicture}[scale=0.4,line width=0.3pt]
\draw (-3.7,1) node {$\displaystyle \sum_{i=0}^a \sum_{j=b}^{k-a}$};
\draw (0,1) node {\blue{$\sc k-i-j$}};
\draw (1*\x,0) node {\red{$\sc i$}};
\draw (1*\x,2) node {\pink{$\sc j$}};
\end{tikzpicture}
\end{center}
Since, by the statement of the lemma, $a+b\leq k-1$ it follows that
$k-i-j\geq 0$ as required.
The contribution to the generating function of \eqref{Eq_inequalities2} 
therefore is
\[
\sum_{i=0}^a \sum_{j=b}^{k-a} (zq)^{(k-a-j)+i+j} q^{i+j}
\mathcal{D}^{(2)}_{i\Lap_0+(k-i-j)\Lap_1+j\Lap_2}(zq^2).
\]
Adding up both contributions results in the right-hand side 
of \eqref{Eq_nis2}.
\end{proof}

The analogue of Conjecture~\ref{Con_A2n2-qseries} for $\mathrm{C}^{(1)}_n$
and $\mathrm{D}^{(2)}_{n+1}$ is given by the following pair of identities.

\begin{conjecture}\label{Con_Cn1Dn2-qseries}
For $k$ a nonnegative integer,
\begin{subequations}
\begin{align}\label{Eq_C}
\mathcal{C}^{(n)}_{k\Lap_0}(z,q)=
\sum_{\la\in\C_{k,0^n}} z^{l(\la)} q^{\abs{\la}}
&=\sum_{\substack{\la \\[1pt] \la_1\leq k}}
(zq)^{\abs{\la}} P_{2\la}\big(1,q,q^2,\dots;q^{2n}\big) 
\intertext{and}
\mathcal{D}^{(n)}_{k\Lap_0}(z,q)=
\sum_{\la\in\D_{k,0^n}} z^{l(\la)} q^{\abs{\la}}
&=\sum_{\substack{\la \\[1pt] \la_1\leq k}}
(zq^2)^{\abs{\la}} P_{2\la}\big(1,q,q^2,\dots;q^{2n-2}\big),
\label{Eq_D}
\end{align}
where in \eqref{Eq_C} it is assumed that $n\geq 0$ and
in \eqref{Eq_D} that $n\geq 1$.
\end{subequations}
\end{conjecture}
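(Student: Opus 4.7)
The plan has three parts. First, the $\mathrm{D}^{(2)}_{n+1}$-identity~\eqref{Eq_D} reduces to the $\mathrm{C}^{(1)}_n$-identity~\eqref{Eq_C}. Specialising the functional equation~\eqref{Eq_fun-CD2} to $a=k$ and shifting $n\mapsto n-1$ yields
\[
\mathcal{D}^{(n)}_{k\Lap_0}(z,q)=\mathcal{C}^{(n-1)}_{k\Lap_0}(zq,q)
\]
for all $n\geq 1$. Substituting the conjectural right-hand side of~\eqref{Eq_C} at rank $n-1$ into this identity transforms $(zq)^{\abs{\la}}\mapsto(zq^2)^{\abs{\la}}$ while the Hall--Littlewood base $q^{2(n-1)}=q^{2n-2}$ is exactly what appears in~\eqref{Eq_D} at rank $n$. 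Hence~\eqref{Eq_D} for all $n\geq 1$ is a consequence of~\eqref{Eq_C} for all $n\geq 0$, and it suffices to attack~\eqref{Eq_C}.

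Second, the small-$k$ and small-$n$ cases of~\eqref{Eq_C} can be dispatched directly. For $k=0$ the identity is the trivial $1=1$. For $k=1$, only the column partitions $\la=(1^r)$ contribute, so $2\la=(2^r)$ and the right-hand side collapses to $\sum_{r\geq 0}(zq)^r P_{(2^r)}(1,q,q^2,\dots;q^{2n})$. Applying Proposition~\ref{Prop_GOW} with $\delta=0$ and collecting $(zq)^r q^{r^2-r}=z^r q^{r^2}$, one obtains---after the trivial relabelling $r=r_1$, $r_i=r_{i+1}$ for $i\geq 1$---precisely the multisum $F^{(n+1)}_{1,0}(z,q)$. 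By the identification~\eqref{Eq_C-F} at $a=0$ this equals $C^{(n)}_0(z,q)=\mathcal{C}^{(n)}_{\Lap_0}(z,q)$, proving~\eqref{Eq_C} for $k=1$. The $n=0$ case for all $k$ can be checked from the finite-product evaluation~\eqref{Eq_finite-ell} followed by $\ell\to\infty$ and $z\mapsto zq$.

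Third, for $k\geq 2$ and $n\geq 1$ the strategy is to pin down both sides via a common system of functional equations: on the generating-function side, Propositions~\ref{Prop_fun-CDa} and~\ref{Prop_fun-CDb} together with the $\mathbb{Z}_2$-symmetry~\eqref{Eq_automorphism} and the initial value $\mathcal{C}^{(n)}_{\La}(0)=1$; for $n=1$ these, combined with~\eqref{Eq_CDn2} and the lemma preceding the conjecture, already determine the full level-$k$ family uniquely. What is then needed is an explicit Andrews--Gordon-style multisum expression for
\[
\sum_{\substack{\la \\ \la_1\leq k}}(zq)^{\abs{\la}}P_{2\la}\bigl(1,q,q^2,\dots;q^{2n}\bigr)
\]
that simultaneously extends Proposition~\ref{Prop_GOW} (the $k=1$ specialisation) and Conjecture~\ref{Con_Shun} (the $n=1$ specialisation), which one would then verify satisfies the same functional equations. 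The principal obstacle is precisely this multisum: Conjecture~\ref{Con_Shun} itself remains open for $k\geq 2$, and no analogue at higher bases $t=q^{2n}$ appears in the existing Bailey-chain literature. A natural attack is to iterate the Bailey lemma starting from the $s=0$, $t=q^{2n}$ specialisation of the Bailey pair~\eqref{Eq_alpha-beta}, hoping to preserve the innermost $(q^2;q^2)_{r_n}$ factor through successive bootstrapping steps; but the required seed pairs at higher bases have not been identified outside the $k=1$ situation, and this is where I expect the essential difficulty to lie.
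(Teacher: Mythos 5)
The statement is a conjecture, and the paper does not prove it in full either: what it actually establishes (in the proposition immediately following) is that \eqref{Eq_C} holds for $n=0$, for $k=1$, and for $z=1$, with \eqref{Eq_D} following in the corresponding cases. Your treatment of the provable special cases is essentially the paper's. The reduction of \eqref{Eq_D} to \eqref{Eq_C} via \eqref{Eq_fun-CD2} with $a=k$ is exactly the paper's first step, and your $k=1$ argument (Proposition~\ref{Prop_GOW} with $\delta=0$, relabelling to obtain $F^{(n+1)}_{1,0}(z,q)$, then \eqref{Eq_C-F} at $a=0$) is verbatim the paper's. You also correctly identify that the general $k\geq 2$, $n\geq 1$ case is genuinely open and that the missing ingredient is an Andrews--Gordon-type multisum for the Hall--Littlewood side interpolating between Proposition~\ref{Prop_GOW} and Conjecture~\ref{Con_Shun}.

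Two gaps relative to the paper. First, your $n=0$ argument only evaluates the coloured-partition side via \eqref{Eq_finite-ell}; you still need to evaluate
$\sum_{\la_1\leq k}(zq)^{\abs{\la}}P_{2\la}(1,q,q^2,\dots;1)
=\sum_{\la_1\leq k}(zq)^{\abs{\la}}m_{2\la}(1,q,q^2,\dots)$
and show it equals the same product. The paper does this with the $t=1$ case of Stembridge's identity \cite{Stembridge90}; a direct bijection between the specialised monomials of $m_{2\la}$ and partitions into odd parts with multiplicities at most $k$ would also do, but some such step must be supplied. Second, and more substantively, you omit the one genuinely non-trivial case the paper does settle: \eqref{Eq_C} holds for $z=1$ and all $k,n$, obtained by combining \cite[Theorem 1.2]{GOW16} (which identifies the $z=1$ Hall--Littlewood sum with the conjectured product for the highest weight $k\Lap_0$) with the Primc--Trup\v{c}evi\'c proof \cite{PT24} of that case of Conjecture~\ref{Con_Cn1}. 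This is worth adding, since it is the only evidence for the conjecture at general $k$ and $n$.
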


\begin{proposition}
Equation \eqref{Eq_C} holds for $n=0$, \eqref{Eq_D} holds
for $n=1$, and both equations hold for $k=1$.
Moreover, \eqref{Eq_C} holds for $z=1$.
\end{proposition}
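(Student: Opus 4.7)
The proof handles the four claims separately, with increasing difficulty.

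\textbf{Cases $n=0$ of \eqref{Eq_C} and $n=1$ of \eqref{Eq_D}.} Here the Hall--Littlewood variable is $t=1$, and $P_\la(x;1)=m_\la(x)$. For \eqref{Eq_C} at $n=0$, the right-hand side becomes $\sum_{\mu:\mu_1\leq k}(zq)^{\abs{\mu}} m_{2\mu}(1,q,q^2,\dots)$. Viewing a choice of $\mu$ together with a rearrangement of $2\mu$ as an infinite sequence $(\alpha_j)_{j\geq 1}$ with each $\alpha_j\in\{0,2,\dots,2k\}$ factorises this sum as $\prod_{j\geq 1}\sum_{i=0}^k (zq^{2j-1})^i = ((zq)^{k+1};q^{2k+2})_\infty/(zq;q^2)_\infty$. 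The left-hand side factorises to the same product, since $\C_k$ is precisely the set of partitions into odd parts with multiplicity at most $k$ (as already noted in the excerpt). The case $n=1$ of \eqref{Eq_D} is entirely analogous after replacing $zq$ by $zq^2$, using that $\D_{k,0}$ consists of partitions into even parts with multiplicity at most $k$.

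\textbf{Case $k=1$ of \eqref{Eq_C} and \eqref{Eq_D}.} When $k=1$ the constraint $\mu_1\leq 1$ forces $\mu=(1^r)$ and $2\mu=(2^r)$, so only the partitions $(2^r)$ contribute to the right-hand sides. Applying Proposition~\ref{Prop_GOW} with $\delta=0$ to expand $P_{(2^r)}(1,q,q^2,\dots;q^{2n})$ and $P_{(2^r)}(1,q,q^2,\dots;q^{2n-2})$, absorbing the respective prefactors $(zq)^r$ and $(zq^2)^r$ into the $q$-exponents, and relabelling summation indices, the right-hand sides become precisely the nested $q$-series $F^{(n+1)}_{1,0}(z,q)$ and $F^{(n)}_{0,0}(z,q)$ defined in \eqref{Eq_Fdef}. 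By \eqref{Eq_C-F} and \eqref{Eq_D-F} with $a=0$, these sums equal $C^{(n)}_0(z,q)=\mathcal{C}^{(n)}_{\Lap_0}(z,q)$ and $D^{(n)}_0(z,q)=\mathcal{D}^{(n)}_{\Lap_0}(z,q)$, which are exactly the left-hand sides at $k=1$.

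\textbf{Case $z=1$ of \eqref{Eq_C}.} This is the main obstacle, since no direct combinatorial or $q$-series manipulation between the two sides is apparent, and the plan is to route both sides to a common third quantity. Specifically, I aim to show that each side coincides with the normalised principally specialised character $\varphi_n(\chi_{k\Lap_0})$ given by \eqref{Eq_Cn-PS}. For the left-hand side, this is exactly the coloured partition formula of Conjecture~\ref{Con_Cn1} specialised to $k_0=k$, $k_1=\cdots=k_n=0$, which has been proved via vertex-algebraic methods by Primc and Trup\v{c}evi\'c \cite{PT24} (building on \cite{BPT18,PS19}). For the right-hand side, the identity $\sum_{\mu:\mu_1\leq k} q^{\abs{\mu}} P_{2\mu}(1,q,q^2,\dots;q^{2n})=\varphi_n(\chi_{k\Lap_0})$ is the $\mathrm{C}^{(1)}_n$-analogue of the Rogers--Ramanujan-type Hall--Littlewood identity cited just after \eqref{Eq_2kLambda0} and follows from the work of Griffin--Ono--Warnaar \cite{GOW16}. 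Equating these two evaluations of $\varphi_n(\chi_{k\Lap_0})$ yields the desired equality at $z=1$.
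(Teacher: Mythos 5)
Your proof is correct and follows essentially the same route as the paper: the $t=1$ factorisation of the monomial-symmetric sum for the small-rank cases, Proposition~\ref{Prop_GOW} combined with \eqref{Eq_C-F} and \eqref{Eq_D-F} for $k=1$, and the combination of \cite[Theorem 1.2]{GOW16} with the Primc--Trup\v{c}evi\'c theorem for $z=1$. The only (immaterial) difference is organisational: the paper first invokes the functional equation \eqref{Eq_fun-CD2} with $a=k$ to reduce all claims about \eqref{Eq_D} to claims about \eqref{Eq_C}, whereas you treat the \eqref{Eq_D} cases by direct parallel arguments.
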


\begin{proof}
By the functional equation \eqref{Eq_fun-CD2} for $a=k$, it is enough
to establish the claims pertaining to \eqref{Eq_C}.

We first will show that \eqref{Eq_C} holds for $n=0$.
By the $t=1$ case of \cite[Theorem 1.2]{Stembridge90},
\[
\sum_{\substack{\la \\[1pt] \la_1\leq k}} m_{2\la}(x_1,\dots,x_{\ell})
=\sum_{\varepsilon_1,\dots,\varepsilon_{\ell} \in\{\pm 1\}}
\prod_{i=1}^{\ell} \frac{x_i^{k(1-\varepsilon_i)}}{1-x_i^{2\varepsilon_i}}
=\prod_{i=1}^{\ell} \frac{1-x_i^{2k+2}}{1-x_i^2}.
\]
Hence
\[
\sum_{\substack{\la\in\C_k \\[1pt] \la_1\leq 2\ell-1}}
\prod_{i=1}^{\ell} x_i^{2f_{2i-1}}=
\sum_{\substack{\la \\[1pt] \la_1\leq k}} m_{2\la}(x_1,\dots,x_{\ell}),
\]
where $f_i:=f_i^{(1)}$.
Specialising $x_i=(zq^{2i-1})^{1/2}$ and using the homogeneity of 
the monomial symmetric functions yields
\[
\sum_{\substack{\la\in\C_k \\[1pt] \la_1\leq 2\ell-1}}
z^{l(\la)} q^{\abs{\la}}=
\sum_{\substack{\la \\[1pt] \la_1\leq k}} (zq)^{\abs{\la}}
m_{2\la}\big(1,q,\dots,q^{\ell-1}\big).
\]
Since $P_{\la}(1)=m_{\la}$, this is a bounded version of \eqref{Eq_C}
for $n=0$.

To prove \eqref{Eq_C} for $k=1$ we note that by Proposition~\ref{Prop_GOW}
with $\delta=0$ we have
\[
\sum_{r=0}^{\infty} (zq)^r P_{(2^r)}\big(1,q,q^2,\dots;q^{2n}\big)
=F^{(n+1)}_{1,0}(z,q).
\]
By \eqref{Eq_C-F} for $a=0$ this is equal to $C^{(n)}_0(z,q)=
\mathcal{C}^{(n)}_{\La_0}(z,q)$.

Finally, the $z=1$ case of \eqref{Eq_C} follows by combining
\cite[Theorem 1.2]{GOW16} with the recent
Primc--Trup\v{c}evi\'c proof of Conjecture~\ref{Con_Cn1}
for $k_0=k$ and $k_2=\dots=k_n=0$.
\end{proof}

By \eqref{Eq_q2}, the $n=2$ instance of \eqref{Eq_D} admits
an alternative expression as a multisum.
We conjecture that similar such multisums hold for
$\mathcal{D}^{(2)}_{k\Lap_1}(z,q)$ and
$\mathcal{D}^{(2)}_{\La_0+(k-1)\Lap_1}(z,q)$.

\begin{conjecture}\label{Con_Shun2}
For $k$ a nonnegative integer,
\begin{align*}
\mathcal{D}^{(2)}_{k\Lap_0}(z,q)&=
\sum_{\substack{r_1,\dots,r_k\geq 0 \\[1pt] s_1,\dots,s_k\geq 0}}\,
\prod_{i=1}^k \frac{z^{r_i+s_i} q^{(r_i+s_i)^2+s_i^2+r_i+2s_i}}
{(q;q)_{r_i-r_{i+1}}(q^2;q^2)_{s_i-s_{i-1}}}, \\
\mathcal{D}^{(2)}_{k\Lap_1}(z,q)&=
\sum_{\substack{r_1,\dots,r_k\geq 0 \\[1pt] s_1,\dots,s_k\geq 0}}\,
\prod_{i=1}^k \frac{z^{r_i+s_i} q^{(r_i+s_i)^2+s_i^2}}
{(q;q)_{r_i-r_{i+1}}(q^2;q^2)_{s_i-s_{i-1}}}, \\
\mathcal{D}^{(2)}_{\Lap_0+(k-1)\Lap_1}(z,q)&=
\sum_{\substack{r_1,\dots,r_k\geq 0 \\[1pt] s_1,\dots,s_k\geq 0}}\,
\Omega_{r_1,\dots,r_k}^{s_1,\dots,s_k}(q)
\prod_{i=1}^k \frac{z^{r_i+s_i} q^{(r_i+s_i)^2+s_i^2}}
{(q;q)_{r_i-r_{i+1}}(q^2;q^2)_{s_i-s_{i-1}}},
\end{align*}
where
\[
\Omega_{r_1,\dots,r_k}^{s_1,\dots,s_k}(q):=
\sum_{i=1}^{k-1}q^{r_i+2s_i}(1-q^{2s_{i+1}-2s_i})+q^{r_k+2s_k}
\]
and $r_{k+1}=s_0:=0$.
\end{conjecture}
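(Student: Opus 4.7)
The plan is to prove the three multisum formulas in Conjecture~\ref{Con_Shun2} by combining the Hall--Littlewood machinery of Section~\ref{Sec_HL} with the functional equations of Proposition~\ref{Prop_fun-CDb}. The first identity follows at once from Conjectures~\ref{Con_Shun} and~\ref{Con_Cn1Dn2-qseries}: replacing $z$ by $zq$ in Conjecture~\ref{Con_Shun} yields
\[
\sum_{\substack{\la \\[1pt] \la_1 \leq k}} (zq^2)^{|\la|} P_{2\la}(1,q,q^2,\dots;q^2)
= \sum_{\substack{r_1,\dots,r_k \geq 0 \\[1pt] s_1,\dots,s_k \geq 0}}
\prod_{i=1}^{k} \frac{z^{r_i+s_i} q^{(r_i+s_i)^2 + s_i^2 + r_i + 2s_i}}{(q;q)_{r_i - r_{i+1}}(q^2;q^2)_{s_i - s_{i-1}}},
\]
and the left-hand side equals $\mathcal{D}^{(2)}_{k\Lap_0}(z,q)$ by equation \eqref{Eq_D} with $n = 2$.

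For the second identity, the natural approach is to establish a companion bivariate Hall--Littlewood identity for $\mathcal{D}^{(2)}_{k\Lap_1}(z,q)$. Following the template of Proposition~\ref{Prop_GOW}, I would iterate the Bailey lemma relative to $1$ using the Lassalle--Schlosser Bailey pair \eqref{Eq_alpha-beta} with a suitably chosen $(s,t)$, $k$ times. At $z = 1$ the target product form is the $\mathrm{D}^{(2)}_3$-identity highlighted in the introduction (a specialisation of Conjecture~\ref{Con_Dn2} with $k_0 = k_2 = 0$), so the task is to produce a clean $z$-deformation of this Rogers--Ramanujan-type identity.

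For the third identity, my plan is to start from the functional equation \eqref{Eq_fun-D2} with $a = k$,
\[
\mathcal{D}^{(2)}_{k\Lap_0}(z) = \sum_{i=0}^{k} (zq^2)^{i}\, \mathcal{D}^{(2)}_{i\Lap_0 + (k-i)\Lap_1}(zq^2),
\]
subtract the $i = 0$ contribution supplied by the second formula at $z \mapsto zq^2$, and match the result against the first formula on the left-hand side. The factor $\Omega_{r_1,\dots,r_k}^{s_1,\dots,s_k}(q)$ is expected to emerge by telescoping: writing
\[
\Omega = \sum_{i=1}^{k-1} \bigl(q^{r_i + 2s_i} - q^{r_i + 2s_{i+1}}\bigr) + q^{r_k + 2s_k},
\]
one recognises it as the natural weight produced by shifting a single $s$-index at a time inside the multisum. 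Further cross-checks would use the $\mathbb{Z}_2$-symmetry \eqref{Eq_automorphism-D} and the full system of equations from Proposition~\ref{Prop_fun-CDb}, which by the subsequent Lemma uniquely pin down all $\mathcal{D}^{(2)}_{\La}(z,q)$ with $\La \in P_{+}^{2k}$.

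The main obstacle is the absence of Conjecture~\ref{Con_Shun} and its $k\Lap_1$-analogue, both of which are needed in the first two steps. Proposition~\ref{Prop_GOW} handles only the single rectangular case $\la = (2^r)$ (equivalent to $k = 1$); the extension to all $k$ amounts to constructing a Bailey chain compatible with the two bases $q$ and $q^2$ simultaneously, which at present is beyond the available Bailey-chain toolkit. Once these Hall--Littlewood identities are established, the $\Omega$-bookkeeping in the third formula should follow from an intricate but mechanical expansion via the functional equations.
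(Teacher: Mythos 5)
The statement you are addressing is a \emph{conjecture}: the paper offers no proof of it for general $k$, and the only part it actually establishes is the $k=2$ case (Theorem~\ref{Thm_kis2}), which is proved by a completely different route --- a $w$-deformation of the functional equations of Propositions~\ref{Prop_fun-CDa} and~\ref{Prop_fun-CDb}, reduction of the deformed system to four ``atomic'' recurrences for a quadruple sum $S_{k_1,k_2,\ell_1,\ell_2}$, and explicit (computer-found) linear combinations of those recurrences. Your proposal is therefore not a proof but a conditional reduction, and you are right to flag that yourself. Your first step is sound as far as it goes: substituting $z\mapsto zq$ in Conjecture~\ref{Con_Shun} and invoking \eqref{Eq_D} with $n=2$ does yield exactly the first display of Conjecture~\ref{Con_Shun2}, and this is precisely the equivalence the paper records just before stating the conjecture. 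But both ingredients are themselves open (Conjecture~\ref{Con_Shun} is proved only for $k\leq 1$ via Proposition~\ref{Prop_GOW}, and \eqref{Eq_D} is proved only for $n=1$ and for $k=1$), so nothing is gained unconditionally.

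The more serious gap is in your plan for the third identity. The functional equation \eqref{Eq_fun-D2} with $a=k$ reads
\[
\mathcal{D}^{(2)}_{k\Lap_0}(z)=\sum_{i=0}^{k}(zq^2)^{i}\,
\mathcal{D}^{(2)}_{i\Lap_0+(k-i)\Lap_1}(zq^2),
\]
so subtracting the $i=0$ term leaves a sum over \emph{all} of
$\mathcal{D}^{(2)}_{i\Lap_0+(k-i)\Lap_1}$ for $1\leq i\leq k$, not the single function
$\mathcal{D}^{(2)}_{\Lap_0+(k-1)\Lap_1}$ you want to isolate; for $k\geq 3$ the conjecture supplies no candidate multisums for the intermediate weights, so this one equation cannot determine the $\Omega$-weighted sum. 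The uniqueness Lemma does say the full system \eqref{Eq_nis2}, \eqref{Eq_fun-D2} and \eqref{Eq_automorphism} pins down every $\mathcal{D}^{(2)}_{\La}(z,q)$, but exploiting it requires verified closed forms for the entire level-$2k$ family --- which is exactly what the paper manages only for $k=2$, and which it explicitly leaves open for $k\geq 3$ (even the correct $w$-deformed system is unknown there). Finally, your second step presupposes a Bailey chain compatible with the two bases $q$ and $q^2$ simultaneously; as you note, no such tool exists, and Proposition~\ref{Prop_GOW} remarks that already for $s=1$ the relevant $\alpha$-sequence ``does not correspond to anything known.'' So the proposal identifies the right network of implications but does not close any of the three identities.
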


Combining this with \eqref{Eq_CDn2} and Conjectures \ref{Con_Cn1}
and \ref{Con_Dn2} gives the following conjectural Andrews--Gordon-type 
identities:
\[
\sum_{\substack{r_1,\dots,r_k\geq 0 \\[1pt] s_1,\dots,s_k\geq 0}}\,
\prod_{i=1}^k \frac{q^{(r_i+s_i)^2+s_i^2+s_i}}
{(q;q)_{r_i-r_{i+1}}(q^2;q^2)_{s_i-s_{i-1}}}
=\frac{(q,q^{k+1},q^{k+2};q^{k+2})_{\infty}}
{(q;q^2)_{\infty}(q;q)_{\infty}},
\]
\begin{align*}
\sum_{\substack{r_1,\dots,r_k\geq 0 \\[1pt] s_1,\dots,s_k\geq 0}} &\,
\prod_{i=1}^k \frac{q^{(r_i+s_i)^2+s_i^2+r_i+2s_i}}
{(q;q)_{r_i-r_{i+1}}(q^2;q^2)_{s_i-s_{i-1}}} \\
&\qquad=
\frac{(q,q^2,q^{2k+2},q^{2k+3},q^{2k+4},q^{2k+4};q^{2k+4})_{\infty}}
{(q^2;q^2)_{\infty}(q;q)_{\infty}},
\end{align*}
\begin{align*}
\sum_{\substack{r_1,\dots,r_k\geq 0 \\[1pt] s_1,\dots,s_k\geq 0}} &\,
\prod_{i=1}^k \frac{q^{(r_i+s_i)^2+s_i^2}}
{(q;q)_{r_i-r_{i+1}}(q^2;q^2)_{s_i-s_{i-1}}} \\
&\qquad=
\frac{(q^{k+1},q^{k+2},q^{k+2},q^{k+3},q^{2k+4},q^{2k+4};q^{2k+4})_{\infty}}
{(q^2;q^2)_{\infty}(q;q)_{\infty}}
\end{align*}
and
\begin{align*}
\sum_{\substack{r_1,\dots,r_k\geq 0 \\[1pt] s_1,\dots,s_k\geq 0}}\, 
\Omega_{r_1,\dots,r_k}^{s_1,\dots,s_k}(q) &
\prod_{i=1}^k \frac{q^{(r_i+s_i)^2+s_i^2}}
{(q;q)_{r_i-r_{i+1}}(q^2;q^2)_{s_i-s_{i-1}}} \\
&\qquad=
\frac{(q^k,q^{k+1},q^{k+3},q^{k+4},q^{2k+4},q^{2k+4};q^{2k+4})_{\infty}}
{(q^2;q^2)_{\infty}(q;q)_{\infty}}.
\end{align*}

Conjecture~\ref{Con_Shun2} for $k=2$ can be completed to a full
set of weights, which is provable using the functional equations
for $\mathcal{D}^{(2)}_{\La}$.

\begin{theorem}\label{Thm_kis2}
We have
\begin{align*}
\mathcal{D}^{(2)}_{2\Lap_0}(z,q)&=
\sum_{\substack{r_1,r_2\geq 0 \\[1pt] s_1,s_2\geq 0}}\,
\prod_{i=1}^2 \frac{z^{r_i+s_i}q^{(r_i+s_i)^2+s_i^2+r_i+2s_i}}
{(q;q)_{r_i-r_{i+1}}(q^2;q^2)_{s_i-s_{i-1}}}, \\
\mathcal{D}^{(2)}_{2\Lap_1}(z,q)&=
\sum_{\substack{r_1,r_2\geq 0 \\[1pt] s_1,s_2\geq 0}}\,
\prod_{i=1}^2 \frac{z^{r_i+s_i} q^{(r_i+s_i)^2+s_i^2}}
{(q;q)_{r_i-r_{i+1}}(q^2;q^2)_{s_i-s_{i-1}}}, \\
\mathcal{D}^{(2)}_{\Lap_0+\Lap_1}(z,q)&=
\sum_{\substack{r_1,r_2\geq 0 \\[1pt] s_1,s_2\geq 0}}\,
\prod_{i=1}^2 \frac{z^{r_i+s_i}
q^{(r_i+s_i)^2+s_i^2+\delta_{i,2}(r_i+2s_i)}}
{(q;q)_{r_i-r_{i+1}}(q^2;q^2)_{s_i-s_{i-1}}}\,
\Big(1+zq^{2+\sum_i(r_i+2s_i)}\Big), \\
\mathcal{D}^{(2)}_{\Lap_0+\Lap_2}(z,q)&=
\sum_{\substack{r_1,r_2\geq 0 \\[1pt] s_1,s_2\geq 0}}\,
\prod_{i=1}^2 \frac{z^{r_i+s_i}
q^{(r_i+s_i)^2+s_i^2+r_i+2\delta_{i,2}s_i}}
{(q;q)_{r_i-r_{i+1}}(q^2;q^2)_{s_i-s_{i-1}}}\,
\Big(1+zq^{2+\sum_i(r_i+2s_i)}\Big),
\end{align*}
where $r_3=s_0:=0$.
\end{theorem}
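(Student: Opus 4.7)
My plan is to apply the uniqueness lemma stated immediately before the theorem: the six generating functions $\{\mathcal{D}^{(2)}_\La(z,q)\}_{\La\in P_+^{4}}$ are uniquely determined by the functional equations \eqref{Eq_nis2} and \eqref{Eq_fun-D2}, the diagram symmetry \eqref{Eq_automorphism-D}, and the initial conditions $\mathcal{D}^{(2)}_\La(0,q)=1$. Accordingly, denote by $\tilde{\mathcal{D}}^{(2)}_\La(z,q)$ the right-hand sides of the four claimed identities for $\La\in\{2\Lap_0,2\Lap_1,\Lap_0+\Lap_1,\Lap_0+\Lap_2\}$, and extend to the remaining two weights by $\tilde{\mathcal{D}}^{(2)}_{2\Lap_2}:=\tilde{\mathcal{D}}^{(2)}_{2\Lap_0}$ and $\tilde{\mathcal{D}}^{(2)}_{\Lap_1+\Lap_2}:=\tilde{\mathcal{D}}^{(2)}_{\Lap_0+\Lap_1}$. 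The initial conditions hold because only the all-zero summand survives at $z=0$, and the symmetry holds by construction, so the substantive task is to verify the functional equations.

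For $k=2$ there are three instances of \eqref{Eq_nis2} (with $(a,b)\in\{(0,0),(0,1),(1,0)\}$) and three instances of \eqref{Eq_fun-D2} (with $a\in\{0,1,2\}$), of which the text observes that exactly four are linearly independent---matching the four orbit representatives. I would extract the coefficient of $z^N$ on each side of each equation; the shift $z\mapsto zq^2$ on the right-hand side becomes a factor of $q^{2N}$, so each functional equation reduces to a finite family of $q$-identities among homogeneous multisums of weighted degree $N$. For the $2\Lap_0$ and $2\Lap_1$ cases the summand factors essentially across the two index pairs $(r_1,s_1)$ and $(r_2,s_2)$, linked only through $(q;q)_{r_1-r_2}^{-1}$ and $(q^2;q^2)_{s_2-s_1}^{-1}$, so these verifications should reduce, via index-telescoping, to an iteration of the $k=1$ case proved in the proposition following Conjecture~\ref{Con_Cn1Dn2-qseries}.

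The main obstacle will be the $\Lap_0+\Lap_1$ and $\Lap_0+\Lap_2$ identities, whose multisums carry the inhomogeneous binomial factor $1+zq^{2+\sum_i(r_i+2s_i)}$. My plan is to split each such sum according to which summand in this factor is used: the $1$-piece delivers a shifted copy of the companion $2\Lap_1$ or $2\Lap_0$ multisum, while the $zq^{2+\sum_i(r_i+2s_i)}$-piece, after the reindexings $(r_1,s_1)\mapsto(r_1+1,s_1)$ or $(r_1,s_1)\mapsto(r_1,s_1+1)$, becomes yet another shifted copy of a companion multisum. The bookkeeping must then show that in the $(a,b)=(1,0)$ instance of \eqref{Eq_nis2} and the $a=1$ instance of \eqref{Eq_fun-D2}---the two equations that nontrivially mix all four weights---these pieces combine and telescope against the right-hand side. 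This is a finite check that I expect to be routine once the correct index shifts are identified, but verifying it in full detail is the most computationally involved step of the argument.
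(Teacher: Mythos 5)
Your overall skeleton agrees with the paper's: both arguments invoke the uniqueness lemma preceding the theorem, extend the four claimed multisums to all six level-$4$ weights via the diagram symmetry \eqref{Eq_automorphism-D}, check the initial conditions trivially, and reduce everything to verifying that the proposed series satisfy the functional equations \eqref{Eq_nis2} and \eqref{Eq_fun-D2}. The gap is that this verification is the entire substance of the proof, and your plan for it is both underspecified and missing the device the paper actually relies on. The paper does \emph{not} verify the single-variable equations directly. It first $w$-deforms both the claimed solutions (replacing $z^{r_i+s_i}$ by $z^{r_i}w^{s_i}$, so that the $r$- and $s$-gradings are separated) and the functional equations themselves (the system \eqref{Eq_wz-functional}, which recovers the original equations at $w=z$), and then verifies the deformed system by introducing a four-parameter family of sums $S_{k_1,k_2,\ell_1,\ell_2}(z,w)$ satisfying four atomic contiguous relations $R^{(1)},\dots,R^{(4)}$, exhibiting each deformed equation as an explicit finite $\mathbb{Q}(z,w,q)$-linear combination of these relations. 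Finding those combinations is computer-assisted (following \cite{KR23}), and the one for \eqref{Eq_toshow3} runs to roughly two dozen terms. Your proposal to extract the coefficient of $z^N$ in the undeformed equations collapses the two gradings into one, which is precisely what makes the resulting identities hard to telescope; nothing in your sketch indicates how those mixed-grading $q$-identities would actually be closed.

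Two smaller points. First, the claim that the $2\Lap_0$ and $2\Lap_1$ cases ``reduce to an iteration of the $k=1$ case'' is not right as stated: the functional equations couple all four weights (e.g.\ $A(z)=B(zq^2)+zq^2C(zq^2)+z^2q^4A(zq^2)$), so no single equation is an identity internal to one multisum, and the $k=1$ proposition you cite establishes a different statement (a product/Hall--Littlewood evaluation), not a contiguous relation one can iterate. Second, your splitting of the factor $1+zq^{2+\sum_i(r_i+2s_i)}$ with the reindexing $s_2\mapsto s_2-1$ does appear in the paper, but only in the remark \emph{after} the theorem, to reconcile the stated formulas with Conjecture~\ref{Con_Shun2}; it is not by itself a mechanism for verifying the functional equations. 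To repair the argument you would need either to import the $w$-deformation and atomic-relation machinery, or to supply an honest direct proof of the resulting multisum identities, which the paper's experience suggests is far from routine.
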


The expression for $\mathcal{D}^{(2)}_{\Lap_0+\Lap_1}$ agrees with
the one given in Conjecture~\ref{Con_Shun2}.
Indeed, splitting the above multisum for
$\mathcal{D}^{(2)}_{\Lap_0+\Lap_1}$ into two multisums in the
obvious manner, then replacing $s_2\mapsto s_2-1$ in the second
multisum, and finally recombining the two summations, yields
\begin{align*}
&\mathcal{D}^{(2)}_{\Lap_0+\Lap_1}(z,q) \\
&\quad=\sum_{\substack{r_1,r_2\geq 0 \\[1pt] s_1,s_2\geq 0}}\,
\prod_{i=1}^2 \frac{z^{r_i+s_i} q^{(r_i+s_i)^2+s_i^2}}
{(q;q)_{r_i-r_{i+1}}(q^2;q^2)_{s_i-s_{i-1}}}\Big(
q^{r_2+2s_2}+q^{r_1+2s_1}(1-q^{2s_2-2s_1})\Big).
\end{align*}
In the same manner it may be shown that
\begin{align*}
&\mathcal{D}^{(2)}_{\Lap_0+\Lap_2}(z,q) \\
&\quad=\sum_{\substack{r_1,r_2\geq 0 \\[1pt] s_1,s_2\geq 0}}\,
\prod_{i=1}^2 \frac{z^{r_i+s_i} q^{(r_i+s_i)^2+s_i^2+\delta_{i,1}r_1}}
{(q;q)_{r_i-r_{i+1}}(q^2;q^2)_{s_i-s_{i-1}}}\Big(
q^{r_2+2s_2}+q^{r_1+2s_1}(1-q^{2s_2-2s_1})\Big).
\end{align*}

\begin{proof}
For brevity, we denote 
\[
A(z)=\mathcal{D}^{(2)}_{2\La_0}(z),\quad 
B(z)=\mathcal{D}^{(2)}_{2\La_1}(z),\quad
C(z)=\mathcal{D}^{(2)}_{\La_0+\La_1}(z),\quad
D(z)=\mathcal{D}^{(2)}_{\La_0+\La_2}(z),
\]
where dependence on $q$ has been suppressed.
From the set of functional equations for $k=2$, we then
choose the following four linearly independent equations:
\begin{align*}
A(z)&=B(zq^2)+zq^2 C(zq^2)+z^2q^4 A(zq^2), \\
D(z)&=B(zq^2)+2zq^2 C(zq^2)+z^2q^4 D(zq^2), \\
B(z)-C(z)&=z^2q^2 B(zq^2)+z^2q^3 C(zq^2)+z^2q^4 A(zq^2), \\
C(z)-D(z)&=zq B(zq^2)+z^2q^3 C(zq^2)+z^2q^4 A(zq^2).
\end{align*}
The first two equations are \eqref{Eq_fun-D2} with $(a,k)=(2,2)$ and
$(1,2)$ respectively, and the last two are \eqref{Eq_nis2} with 
$(a,b,k)=(0,0,2)$ and $(0,1,2)$.
Here we have also applied the symmetry \eqref{Eq_automorphism} to
eliminate occurrences of the weights $\Lap_1+\Lap_2$ and $2\Lap_2$.
Because it will subsequently lead to a significant reduction in the
number of terms, we subtract the first equation from the second and
the fourth equation from the third, so that second and fourth
equations are replaced by
\begin{align*}
D(z)-A(z)&=zq^2 C(zq^2)+z^2q^4\big(D(zq^2)-A(zq^2)\big), \\
B(z)-2C(z)+D(z)&=-zq(1-zq)B(zq^2).
\end{align*}
Our aim is to show that the purported sums in Theorem~\eqref{Thm_kis2}
are the (unique) solutions to the above system of equations.
Instead of solving these equations in $\mathbb{Q}(q)[[z]]$
subject to the initial condition $f(0)=1$ for all
$f\in\{A,B,C,D\}=:\mathcal{S}$, we will solve the $w$-deformed equations
\begin{subequations}\label{Eq_wz-functional}
\begin{align}
\label{Eq_wz-functional1}
0&=A(z,w)-B(zq^2,wq^2)-zq^2 C(zq^2,wq^2)-z^2q^4 A(zq^2,wq^2), \\
\label{Eq_wz-functional2}
0&=D(z,w)-A(z,w)-wq^2 C(zq^2,wq^2) \\
&\qquad -z^2q^4\big(D(zq^2,wq^2)-A(zq^2,wq^2)\big), \notag \\
\label{Eq_wz-functional3}
0&=B(z,w)-\Big(1+\frac{w}{z}\Big)C(z,w)+\frac{w}{z}D(z,w) \\
&\qquad +w q\Big(1-\frac{z^2q}{w}\Big) B(zq^2,wq^2), \notag \\
\label{Eq_wz-functional4}
0&=C(z,w)-D(z,w)-zq B(zq^2,wq^2)-z^2q^3 C(zq^2,wq^2) \\
&\qquad -zwq^4 A(zq^2,wq^2), \notag
\end{align}
\end{subequations}
in $\mathbb{Q}(q)[[z,w]]$ subject to the initial conditions
$f(0,0)=1$ for all $f\in\mathcal{S}$.
If $f(z):=f(z,z)$ for $f\in \mathcal{S}$, then
$\{f(z):f\in \mathcal{S}\}$ will satisfy the undeformed equations,
obtained by setting $w=z$ in \eqref{Eq_wz-functional}.
Our claim is now that this system of equations is solved by
\begin{subequations}\label{Eq_AtoD}
\begin{align}
A(z,w)&=\sum_{\substack{r_1,r_2\geq 0 \\[1pt] s_1,s_2\geq 0}}\,
\prod_{i=1}^2 \frac{z^{r_i}w^{s_i}q^{(r_i+s_i)^2+s_i^2+r_i+2s_i}}
{(q;q)_{r_i-r_{i+1}}(q^2;q^2)_{s_i-s_{i-1}}}, \\
B(z,w)&=\sum_{\substack{r_1,r_2\geq 0 \\[1pt] s_1,s_2\geq 0}}\,
\prod_{i=1}^2 \frac{z^{r_i}w^{s_i} q^{(r_i+s_i)^2+s_i^2}}
{(q;q)_{r_i-r_{i+1}}(q^2;q^2)_{s_i-s_{i-1}}}, \\
C(z,w)&=\sum_{\substack{r_1,r_2\geq 0 \\[1pt] s_1,s_2\geq 0}}\,
\prod_{i=1}^2 \frac{z^{r_i}w^{s_i}
q^{(r_i+s_i)^2+s_i^2+\delta_{i,2}(r_i+2s_i)}}
{(q;q)_{r_i-r_{i+1}}(q^2;q^2)_{s_i-s_{i-1}}}\,
\Big(1+wq^{2+\sum_i(r_i+2s_i)}\Big), \\
D(z,w)&=\sum_{\substack{r_1,r_2\geq 0 \\[1pt] s_1,s_2\geq 0}}\,
\prod_{i=1}^2 \frac{z^{r_i}w^{s_i}
q^{(r_i+s_i)^2+s_i^2+r_i+2\delta_{i,2}s_i}}
{(q;q)_{r_i-r_{i+1}}(q^2;q^2)_{s_i-s_{i-1}}}\,
\Big(1+wq^{2+\sum_i(r_i+2s_i)}\Big).
\end{align}
\end{subequations}
Clearly, each of the four functions trivialises to $1$ for $z=w=0$,
as required.

To prove our claim, we adopt the computer-assisted procedure of \cite{KR23}
(see also \cite{Chern20}).
For this it will be convenient to define
\begin{align*}
&S_{k_1,k_2,\ell_1,\ell_2}=S_{k_1,k_2,\ell_1,\ell_2}(z,w) \\[1mm]
&\,:=\sum_{\substack{m_1,m_2 \geq 0 \\[1pt] n_1,n_2\geq 0}}\!
\frac{z^{M_1+M_2}w^{N_1+N_2}
q^{(M_1+N_2)^2+(M_2+N_1)^2+N_1^2+N_2^2+k_1m_1+k_2m_2+2\ell_1n_1+2\ell_2n_2}}
{(q;q)_{m_1}(q;q)_{m_2}(q^2;q^2)_{n_1}(q^2;q^2)_{n_2}},
\end{align*}
where $k_1,k_2,\ell_1,\ell_2\in\mathbb{Z}$, $M_i=m_i+\dots+m_2$,
$N_i:=n_i+\dots+n_2$ and where dependency on $q$ is still being
suppressed.
There is some redundancy in this definition since
\[
S_{k_1,k_2,\ell_1,\ell_2}(zq^m,wq^{2n})=
S_{k_1+m,k_2+2m,\ell_1+n,\ell_2+2n}(z,w).
\]
In terms of the function $S$, the claimed expressions \eqref{Eq_AtoD} take
the form
\begin{align*}
A(z,w)&=S_{1,2,1,2}(z,w), \hspace{-20pt} &
C(z,w)&=S_{0,1,1,1}(z,w)+wq^2\,S_{1,3,2,3}(z,w),\\
B(z,w)&=S_{0,0,0,0}(z,w), \hspace{-20pt} & 
D(z,w)&=S_{1,2,1,1}(z,w)+wq^2\,S_{2,4,2,3}(z,w).
\end{align*}
We substitute this into \eqref{Eq_wz-functional} in which 
$z,w$ have been replaced by $z/q,w/q^2$ in \eqref{Eq_wz-functional1},
and $z,w$ has been replaced by $z/q,w$ in \eqref{Eq_wz-functional2}.
The resulting four equations are
\begin{subequations}
\begin{align}
\label{Eq_toshow1}
0&=S_{0,0,0,0}-S_{1,2,0,0}-zq\big(S_{1,3,1,1}+wq^2\,S_{2,5,2,3}\big)
-z^2q^2\,S_{2,4,1,2}, \\[1mm]
\label{Eq_toshow2}
0&=S_{0,0,1,1}+wq^2\,S_{1,2,2,3}-S_{0,0,1,2}
-wq^2 \big(S_{1,3,2,3}+wq^4\,S_{2,5,3,5}\big) \\
&\quad -z^2q^2 \big( S_{2,4,2,3}+wq^4\,S_{3,6,3,5}-S_{2,4,2,4}\big),
\notag \\[1mm]
\label{Eq_toshow3}
0&=S_{0,0,0,0}-\Big(1+\frac{w}{z}\Big)
\big(S_{0,1,1,1}+wq^2\,S_{1,3,2,3}\big) \\
&\quad +\frac{w}{z}\big(S_{1,2,1,1}+wq^2\,S_{2,4,2,3}\big)
+w q\Big(1-\frac{z^2q}{w}\Big) S_{2,4,1,2}, \notag \\
\label{Eq_toshow4}
0&=S_{0,1,1,1}+wq^2\,S_{1,3,2,3}-S_{1,2,1,1}-wq^2\,S_{2,4,2,3} \\
&\quad
-zq\,S_{2,4,1,2}-z^2q^3\big(S_{2,5,2,3}+wq^4\,S_{3,7,3,5}\big)
-zwq^4\,S_{3,6,2,4}. \notag
\end{align}
\end{subequations}
It is easily deduced that the function $S$ satisfies the following four
atomic relations:
\begin{align*}
R^{(1)}_{k_1,k_2,\ell_1,\ell_2}&:=
S_{k_1,k_2,\ell_1,\ell_2}-S_{k_1+1,k_2,\ell_1,\ell_2} 
-zq^{k_1+1}\,S_{k_1+2,k_2+2,\ell_1,\ell_2+1}=0, \\[1mm]
R^{(2)}_{k_1,k_2,\ell_1,\ell_2}&:=
S_{k_1,k_2,\ell_1,\ell_2}-S_{k_1,k_2+1,\ell_1,\ell_2}
-z^2q^{k_2+2}\,S_{k_1+2,k_2+4,\ell_1+1,\ell_2+2}=0, \\[1mm]
R^{(3)}_{k_1,k_2,\ell_1,\ell_2}&:=
S_{k_1,k_2,\ell_1,\ell_2}-S_{k_1,k_2,\ell_1+1,\ell_2}
-wq^{2\ell_1+2}\,S_{k_1,k_2+2,\ell_1+2,\ell_2+2}=0,\\[1mm]
R^{(4)}_{k_1,k_2,\ell_1,\ell_2}&:=
S_{k_1,k_2,\ell_1,\ell_2}-S_{k_1,k_2,\ell_1,\ell_2+1}
-w^2q^{2\ell_2+4}\,S_{k_1+2,k_2+4,\ell_1+2,\ell_2+4}=0
\end{align*}
for all $k_1,k_2,\ell_1,\ell_2\in\mathbb{Z}$.
We now show that the equations \eqref{Eq_toshow1}--\eqref{Eq_toshow4} are 
in the linear span over $\mathbb{Q}(z,w,q)$ of a finite subset of 
$\{R^{(i)}_{k_1,k_2,\ell_1,\ell_2}\}_
{k_1,k_2,\ell_1,\ell_2\in\mathbb{Z},i\in{1,2,3,4}}$
First, \eqref{Eq_toshow1} is the same as
\[
R^{(1)}_{0,1,0,0}-zq R^{(1)}_{1,3,1,1}+R^{(2)}_{0,0,0,0}
+R^{(2)}_{1,1,0,0}+zq R^{(3)}_{2,3,0,1}.
\]
Similarly, \eqref{Eq_toshow2} is
\[
R^{(2)}_{0,0,1,1}-R^{(2)}_{0,0,1,2}+wq^2 R^{(2)}_{1,2,2,3}+R^{(4)}_{0,1,1,1}.
\]
The linear combination for \eqref{Eq_toshow3} is by far the
most involved of the four cases:
\begin{align*}
&\frac{w^2}{z^3} R^{(1)}_{0,0,1,1}+R^{(1)}_{0,1,0,1} 
-\Big(1+\frac{w^2}{z^3}\Big) R^{(1)}_{0,1,1,1} 
+\frac{w^2q}{z^2} R^{(1)}_{0,1,1,2} \\
& \quad -\frac{w}{z}\Big(1+\frac{wq}{z}\Big) R^{(1)}_{0,2,1,2}
-\frac{w}{z} \big( R^{(1)}_{1,1,0,1}-R^{(1)}_{1,1,1,1}\big)
-\frac{w^2q^2}{z} R^{(1)}_{2,4,1,3} \\
& \quad +\Big(1-\frac{w}{z^2q}\Big) R^{(2)}_{0,0,0,0}
+\frac{w}{z^2q} R^{(2)}_{0,0,0,1} 
-\frac{w^2}{z^3}\big(R^{(2)}_{0,0,1,1}-R^{(2)}_{1,0,1,1}\big) \\
& \quad -\frac{w}{z}\Big(1+\frac{wq}{z}\Big) R^{(2)}_{0,1,1,2}
+\frac{w^2q}{z^2} \big(R^{(2)}_{1,1,1,2}+R^{(2)}_{2,2,0,2}\big) \\
& \quad +\Big(1+\frac{w}{z}\Big)R^{(3)}_{1,1,0,1}
-\frac{w}{z} R^{(3)}_{2,1,0,1}-\frac{w^2q}{z^2} R^{(3)}_{2,2,0,2} 
+\Big(1+\frac{w^2}{z^3}\Big)zq R^{(3)}_{2,3,0,2} \\
& \quad +\frac{w^2q^2}{z}\big(R^{(3)}_{2,3,1,3}-R^{(3)}_{3,4,1,3}\big)
-wq^2 R^{(3)}_{3,3,0,2} \\
& \quad +\frac{w}{z^2q} R^{(4)}_{0,0,0,0} 
+\Big(1-\frac{w}{z^2q}\Big) R^{(4)}_{0,1,0,0}
-\frac{w}{z}\big(R^{(4)}_{0,1,1,1}-R^{(4)}_{1,2,1,1}\big).
\end{align*}
Finally, \eqref{Eq_toshow4} is
\begin{align*}
&R^{(1)}_{0,1,1,1}+R^{(1)}_{1,2,0,1}-R^{(1)}_{1,2,1,1}
-z^2q^3 R^{(1)}_{2,5,2,3}
+R^{(2)}_{1,1,0,1}+zq R^{(2)}_{2,3,1,2} \\
&\quad -R^{(3)}_{1,1,0,1}+R^{(3)}_{2,2,0,1}
+zq^2 R^{(3)}_{3,4,0,2}+z^2q^3 R^{(3)}_{3,5,1,3}. \qedhere
\end{align*}
\end{proof}

We conclude this section with a comment on the functional
equations \eqref{Eq_wz-functional} and their solution \eqref{Eq_AtoD}.
From this solution and the remark preceding Proposition~\ref{Prop_fun}
it follows that $A(z,0,q)=D(z,0,q)=\A^{(1)}_{2\Lap_0}(z,q)$,
$B(z,0,q)=\A^{(1)}_{2\Lap_1}(z,q)$ and
$C(z,0,q)=\A^{(1)}_{\Lap_0+\Lap_1}(z,q)$.
Combinatorially this means that by setting $w=0$ those partitions in
$\D_{k_0,k_1,k_2}$ that have a nonzero entry in the top row of their
frequency array are eliminated. 
The resulting set of partitions is in one-to-one correspondence with
$\A_{k_0,k_1}$.
Accordingly, it is not hard to see that the $w=0$ case of
\eqref{Eq_wz-functional} is equivalent to the Rogers--Selberg equations
\eqref{Eq_RS} for $k=2$.
Interestingly, these same results with $(z,q)$ replaced by $(w,q^2)$
arise as a further special case since
$A(0,w,q)=\A^{(1)}_{2\Lap_0}(w,q^2)$,
$B(0,w,q)=\A^{(1)}_{2\Lap_1}(w,q^2)$ and
$C(0,w,q)=D(0,w,q)=\A^{(1)}_{\Lap_0+\Lap_1}(w,q^2)$.
The corresponding simplification of \eqref{Eq_wz-functional} is,
up to the trivial $\A^{(1)}_{2\Lap_0}(w,q^2)=\A^{(1)}_{2\Lap_1}(wq^2,q^2)$,
identical to the Rogers--Selberg equations with $(z,q)\mapsto (w,q^2)$.
We do not yet know what the appropriate set of $w$-deformed equations
is for $k\geq 3$ with the exception of the two equations
\[
\mathcal{D}^{(2)}_{k\Lap_0}(z,w)=\sum_{i=0}^k 
(zq^2)^i\,\mathcal{D}^{(2)}_{i\Lap_0+(k-i)\Lap_1}(zq^2,wq^2)
\]
and
\[
\mathcal{D}^{(2)}_{k\Lap_1}(z,w)-
\mathcal{D}^{(2)}_{\Lap_0+(k-1)\Lap_1}(z,w)
=(zq)^k \sum_{i=0}^k \Big(\frac{wq}{z}\Big)^i\,
\mathcal{D}^{(2)}_{i\Lap_0+(k-i)\Lap_1}(zq^2,wq^2).
\]
For $\mathcal{D}^{(2)}_{(k-a)\La_0+a\La_1}(z,w)\in \mathbb{Q}(q)[[z,w]]$
these equations have the property
\begin{align*}
\mathcal{D}^{(2)}_{a\Lap_0+(k-a)\Lap_1}(z,0,q)&=
\mathcal{A}^{(1)}_{a\Lap_0+(k-a)\Lap_1}(z,q), \\
\mathcal{D}^{(2)}_{a\Lap_0+(k-a)\Lap_1}(0,w,q)
&=\mathcal{A}^{(1)}_{a\Lap_0+(k-a)\Lap_1}(w,q^2).
\end{align*}
A reasonable guess is thus that
\begin{align*}
\mathcal{D}^{(2)}_{k\Lap_0}(z/q,w/q^2,q)&=
\mathcal{D}^{(2)}_{k\Lap_1}(z,w,q) \\
&=\sum_{\substack{r_1,\dots,r_k\geq 0 \\[1pt] s_1,\dots,s_k\geq 0}}\,
\prod_{i=1}^k \frac{z^{r_i} w^{s_i} q^{(r_i+s_i)^2+s_i^2}}
{(q;q)_{r_i-r_{i+1}}(q^2;q^2)_{s_i-s_{i-1}}}, \\
\mathcal{D}^{(2)}_{\La_0+(k-1)\Lap_1}(z,w,q)
&=\sum_{\substack{r_1,\dots,r_k\geq 0 \\[1pt] s_1,\dots,s_k\geq 0}}
\Omega_{r_1,\dots,r_k}^{s_1,\dots,s_k}(q)
\prod_{i=1}^k \frac{z^{r_i} w^{s_i} q^{(r_i+s_i)^2+s_i^2}}
{(q;q)_{r_i-r_{i+1}}(q^2;q^2)_{s_i-s_{i-1}}},
\end{align*}
where $r_{k+1}=s_0:=0$.
Indeed, for $w=0$ this yields the multisums on the right of
\eqref{Eq_AG} for $a=0$, $a=k$ and, since
$\Omega_{r_1,\dots,r_k}^{0,\dots,0}(q)=q^{r_k}$, $a=k-1$.
Similarly, for $z=0$ it gives \eqref{Eq_AG}
with $(z,q)\mapsto(w,q^2)$ for $a=0$, $a=k$ and, since 
$\Omega_{s_1,\dots,s_k}^{s_1,\dots,s_k}(q)=q^{2s_1}$, $a=k-1$.
The above series thus have the structure of two interwoven copies
of the Andrews--Gordon multisums.
It is an open problem to find the $w,z$-generalisations
of the Andrews--Gordon multisums \eqref{Eq_AG} for $1\leq a\leq k-2$.
Of course, the correct such sum should equate to
\[
\frac{(q^{a+1},q^{a+2},q^{2k-a+2},q^{2k-a+3},
q^{2k+4},q^{2k+4};q^{2k+4})_{\infty}}
{(q^2;q^2)_{\infty}(q;q)_{\infty}}
\]
when $z=w=1$.

%%%%%%%%%%%%%%%%%%%%%%%%%%%%%%%%%%%%%%%%%%%%%%%%%%%%%%%%%%%%

\section{Towards completing Conjectures~\ref{Con_A2n2-qseries}
and \ref{Con_Cn1Dn2-qseries}}\label{Sec_completion}

Recall the definition of the bivariate generating function for coloured
partitions of type $\mathrm{A}^{(2)}_{2n}$, $\mathrm{C}^{(1)}_n$ and
$\mathrm{D}^{(2)}_{n+1}$:
\[
\mathcal{G}^{(n)}_{\La}(z,q):=
\sum_{\la\in\G_{k_0,\dots,k_n}} z^{l(\la)} q^{\abs{\la}},
\]
where $\La=k_0\Lap_0+\dots+k_n\Lap_n$ and $(\mathcal{G},\G)$ is one of
$(\mathcal{A},\A)$, $(\mathcal{C},\C)$ or $(\mathcal{D},\D)$.
For the weights $\La=k\Lap_0$ and $\La=k\Lap_n$ explicit formulas for these
generating functions in terms of Hall--Littlewood symmetric functions are 
proposed in Conjectures~\eqref{Con_A2n2-qseries} and
\eqref{Con_Cn1Dn2-qseries}.
No such expressions in terms of Hall--Littlewood functions seem to exist
for other weights.
By \cite[Lemma 2.1]{GOW16}, for $k\geq 0$, $n\geq 1$ and $t:=q^n$,
\begin{align*}
&\sum_{\substack{\la \\[1pt] \la_1\leq k}} 
(zq)^{\abs{\la}} P_{2\la}(1,q,q^2,\dots;t)\\
&\quad=\sum \prod_{i=1}^{2k}\Bigg\{
\frac{(zq)^{\frac{1}{2}\mu^{(0)}_i}}
{(t;t)_{\mu^{(0)}_i-\mu^{(0)}_{i+1}}}
\prod_{a=1}^n
q^{\mu_i^{(a)}} t^{\binombig{\mu^{(a-1)}_i-\mu^{(a)}_i}{2}}
\qbinbig{\mu^{(a-1)}_i-\mu^{(a)}_{i+1}}{\mu^{(a-1)}_i-\mu^{(a)}_i}_t
\Bigg\} \\[1mm]
&\quad=:\HL_{k,n}(z,q),
\end{align*}
where the sum on the right is over sequences of partitions
$0=\mu^{(n)}\subseteq\cdots\subseteq\mu^{(1)}\subseteq\mu^{(0)}$
such that all parts of $(\mu^{(0)})'$ are even and $l(\mu^{(0)})\leq 2k$.
Here $\mu\subseteq\la$ is shorthand for partition-inclusion, that is,
$\mu_i\leq\la_i$ for all $i\geq 1$, and, by abuse of notation, $0$ denotes
the unique partition of $0$.
The condition on the partition $\mu^{(0)}$ implies that 
$\mu^{(0)}_{2i-1}=\mu^{(0)}_{2i}$ for all $1\leq i\leq k$,
so that
\[
\prod_{i=1}^{2k} \frac{(zq)^{\frac{1}{2}\mu^{(0)}_i}}
{(t;t)_{\mu^{(0)}_i-\mu^{(0)}_{i+1}}}
=\prod_{i=1}^k \frac{(zq)^{\mu^{(0)}_{2i-1}}}
{(t;t)_{\mu^{(0)}_{2i-1}-\mu^{(0)}_{2i+1}}}.
\]
Conjectures~\ref{Con_A2n2-qseries} and \ref{Con_Cn1Dn2-qseries} can
thus be written in the following alternative form.
 
\begin{conjecture}
For $k$ a nonnegative integer and $n$ a positive integer,
\begin{align*}
\mathcal{A}^{(n)}_{k\Lap_n}(z,q)&=\HL_{k,2n-1}(z,q), \\
\mathcal{A}^{(n)}_{k\Lap_0}(z,q)&=\HL_{k,2n-1}(zq,q), \\
\mathcal{C}^{(n)}_{k\Lap_0}(z,q)&=\HL_{k,2n}(z,q), \\
\mathcal{D}^{(n)}_{k\Lap_0}(z,q)&=\HL_{k,2n-2}(zq,q),
\end{align*}
where the final equation requires $n\geq 2$. 
\end{conjecture}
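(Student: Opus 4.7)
The plan is to deduce this conjectural reformulation directly from Conjectures~\ref{Con_A2n2-qseries} and \ref{Con_Cn1Dn2-qseries} by invoking the Hall--Littlewood multisum identity of \cite[Lemma 2.1]{GOW16} recalled just above. Since the statement is purely an equivalent repackaging of those two earlier ones, the proof amounts to a verification of parameter assignments; no new combinatorial or algebraic input is required beyond matching the various shifts of $z$ against the powers of $q$ appearing in the summand.

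First, for the $\mathrm{A}^{(2)}_{2n}$ identities: the right-hand side of \eqref{Eq_kLambdan} is $\sum_{\la:\,\la_1\leq k}(zq)^{\abs{\la}}P_{2\la}(1,q,q^2,\dots;q^{2n-1})$, which by the cited lemma with $t=q^{2n-1}$ equals $\HL_{k,2n-1}(z,q)$. Similarly, the right-hand side of \eqref{Eq_2kLambda0} has $(zq^2)^{\abs{\la}}$ in place of $(zq)^{\abs{\la}}$. Writing $(zq^2)^{\abs{\la}}=\big((zq)\cdot q\big)^{\abs{\la}}$ and applying the lemma with $t=q^{2n-1}$ and $z$ replaced by $zq$ gives $\HL_{k,2n-1}(zq,q)$, yielding the second identity.

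The remaining two cases follow by the same mechanism. The right-hand side of \eqref{Eq_C} is $\sum_{\la:\,\la_1\leq k}(zq)^{\abs{\la}}P_{2\la}(1,q,q^2,\dots;q^{2n})$, which the lemma at $t=q^{2n}$ identifies with $\HL_{k,2n}(z,q)$. Finally, for $\mathrm{D}^{(2)}_{n+1}$ with $n\geq 2$, the right-hand side of \eqref{Eq_D} is $\sum_{\la:\,\la_1\leq k}(zq^2)^{\abs{\la}}P_{2\la}(1,q,q^2,\dots;q^{2n-2})$, and invoking the lemma at $t=q^{2n-2}$ after the shift $z\mapsto zq$ produces $\HL_{k,2n-2}(zq,q)$. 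The restriction $n\geq 2$ corresponds exactly to the condition $n\geq 1$ in the cited lemma (so that the Hall--Littlewood parameter $t=q^{2n-2}$ lies in the range used there), matching the hypothesis under which \eqref{Eq_D} was stated.

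The only genuine obstacle is that the source conjectures are themselves open beyond $k=1$ and a handful of low-rank cases treated in Sections~\ref{Sec_Special} and \ref{Sec_CD}. Any unconditional proof of the reformulation must therefore go through an unconditional proof of Conjectures~\ref{Con_A2n2-qseries} and \ref{Con_Cn1Dn2-qseries}, and the most plausible strategy for that appears to be to derive, from the functional equations of Propositions~\ref{Prop_fun}, \ref{Prop_fun-CDa} and \ref{Prop_fun-CDb}, a closed system that $\HL_{k,n}$ can be shown to satisfy via recurrences for Hall--Littlewood polynomials (in the spirit of the $k=2$ argument used to prove Theorem~\ref{Thm_kis2}). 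Establishing such matching recurrences for $\HL_{k,n}$ for arbitrary $k$ is the step where one expects the real work to lie.
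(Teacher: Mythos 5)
Your derivation matches the paper's own justification exactly: the statement is presented there as a rewriting of Conjectures~\ref{Con_A2n2-qseries} and \ref{Con_Cn1Dn2-qseries} via \cite[Lemma 2.1]{GOW16}, with the same parameter matching $t=q^{2n-1},q^{2n},q^{2n-2}$ and the same absorption of $(zq^2)^{\abs{\la}}=((zq)q)^{\abs{\la}}$ into the shift $z\mapsto zq$, and the restriction $n\geq 2$ in the last equation arising for the reason you give. The equivalence is all that can be ``proved'' here, since the underlying identities remain conjectural, as you correctly note.
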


By the symmetry \eqref{Eq_automorphism}, $\Lap_0$ may be replaced by
$\Lap_n$ in the last two results.

Let $S_{k,n}$ denote the set of all sequences
$0=\mu^{(n)}\subseteq\cdots\subseteq\mu^{(1)}\subseteq\mu^{(0)}$ of
partitions such that $(\mu^{(0)})'$ is even and $l(\mu^{(0)})\leq 2k$.
For $\boldsymbol{\mu}\in S_{k,n}$ and $t:=q^n$, define
\[
\HL_{k,n;\boldsymbol{\mu}}(z,q)
:=\prod_{i=1}^{2k}\Bigg\{
\frac{(zq)^{\frac{1}{2}\mu^{(0)}_i}}
{(t;t)_{\mu^{(0)}_i-\mu^{(0)}_{i+1}}}
\prod_{a=1}^n
q^{\mu_i^{(a)}}t^{\binombig{\mu^{(a-1)}_i-\mu^{(a)}_i}{2}}
\qbinbig{\mu^{(a-1)}_i-\mu^{(a)}_{i+1}}{\mu^{(a-1)}_i-\mu^{(a)}_i}_t
\Bigg\}.
\]

\begin{conjecture}
For $k,n$ positive integers,
\begin{subequations}
\begin{equation}\label{Eq_HL-variant1}
\sum_{\boldsymbol{\mu}\in S_{1,n}}
q^{n\mu^{(0)}_1-n\mu^{(1)}_1}\HL_{1,n;\boldsymbol{\mu}}(z/q,q)=
\begin{cases}
\mathcal{A}^{(n/2+1/2)}_{\Lap_1}(z,q), & \text{for odd $n$}, \\[2mm]
\mathcal{D}^{(n/2+1)}_{\Lap_1}(z,q), & \text{for even $n$}
\end{cases}
\end{equation}
and
\begin{equation}\label{Eq_HL-variant2}
\sum_{\boldsymbol{\mu}\in S_{k,2}}
q^{\sum_{i=1}^{2k} \mu^{(0)}_i-2\mu^{(1)}_1}
\HL_{1,2;\boldsymbol{\mu}}(z/q,q)=
\mathcal{D}^{(2)}_{(k-1)\Lap_0+\Lap_1}(z,q).
\end{equation}
\end{subequations}
\end{conjecture}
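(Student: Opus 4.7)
The plan is to establish each part of the final conjecture separately: identity \eqref{Eq_HL-variant1} by a direct evaluation of the Hall--Littlewood-type multi-sum in the spirit of Proposition~\ref{Prop_GOW}, and identity \eqref{Eq_HL-variant2} by matching functional equations.

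For \eqref{Eq_HL-variant1}, the restriction $k=1$ forces $\mu^{(0)}=(s,s)$ for some $s\in\mathbb{N}_0$, and each $\mu^{(a)}$ has at most two parts $(\alpha_a,\beta_a)$ with $\alpha_0=\beta_0=s$ and $\alpha_n=\beta_n=0$. First I would introduce new summation variables $r_a:=\alpha_{a-1}-\alpha_a$ and $s_a:=\beta_{a-1}-\beta_a$, note that the twist $q^{n(\mu^{(0)}_1-\mu^{(1)}_1)}=t^{r_1}$ (where $t=q^n$) absorbs into the leading factor $t^{\binom{\alpha_0-\alpha_1}{2}}$ to produce $t^{\binom{r_1+1}{2}}$, and observe that the $t$-binomial factors split into independent $\alpha$- and $\beta$-chains. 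Next I would telescope out the $\beta$-chain using the $q$-binomial theorem, reducing the full sum to an $n$-fold series in the $r_a$'s. Identifying the resulting multisum with $F^{(m)}_{2,\delta}(z,q)$ from \eqref{Eq_Fdef}, with $(m,\delta)=((n+1)/2,1)$ for odd $n$ and $(m,\delta)=(n/2+1,0)$ for even $n$, the desired equalities then follow from the explicit expressions \eqref{Eq_A-F} and \eqref{Eq_D-F}.

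For \eqref{Eq_HL-variant2} the target generating function $\mathcal{D}^{(2)}_{(k-1)\Lap_0+\Lap_1}(z,q)$ admits no closed Hall--Littlewood interpretation, so instead I would match functional equations on both sides. On the right, Propositions~\ref{Prop_fun-CDa} and~\ref{Prop_fun-CDb} already give a system uniquely determining the generating functions $\{\mathcal{D}^{(2)}_{\La}(z,q)\}_{\La\in P^{2k}_{+}}$ subject to the obvious initial conditions. On the left, I would derive analogous $q$-difference equations for the Hall--Littlewood sum by peeling off the first column of $\mu^{(0)}$ using the $q$-binomial splitting $\qbinbig{m}{j}_{q^2}=\qbinbig{m-1}{j-1}_{q^2}+q^{2j}\qbinbig{m-1}{j}_{q^2}$, and by simultaneously splitting the leading row of $\mu^{(1)}$, which is the only partition coupled to the twist $q^{-2\mu^{(1)}_1}$. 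Combined with the Hall--Littlewood formulas of Conjecture~\ref{Con_Shun2} for the weights $k\Lap_0$ and $k\Lap_1$ (proven for $k=2$ in Theorem~\ref{Thm_kis2}), one obtains a closed system of equations isomorphic to the one on the right, from which uniqueness finishes the argument.

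The main obstacle will be deriving this functional-equation counterpart on the Hall--Littlewood side of \eqref{Eq_HL-variant2}: the twist $q^{-2\mu^{(1)}_1}$ couples the inner partition $\mu^{(1)}$ to the entire shape of $\mu^{(0)}$ across all $2k$ columns, so a naive column-by-column recursion does not close. A workable route would be to enlarge the family by introducing a bivariate $(z,w)$-refinement of the LHS (in analogy with the $w$-deformation used in the proof of Theorem~\ref{Thm_kis2}), verify the identity by symbolic computation for small $k$ in order to identify the correct atomic relations in an extended $S_{k_1,\dots,k_k,\ell_1,\dots,\ell_k}$-family, and then lift the pattern to general $k$ via a Bailey-type chain in which the twist propagates consistently. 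Whether such a chain exists for arbitrary $k$ is the crux of the problem and I expect it to be substantially harder than either the Bailey-pair argument behind Proposition~\ref{Prop_GOW} or the atomic-relation argument behind Theorem~\ref{Thm_kis2}.
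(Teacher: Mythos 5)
This statement is left as a conjecture in the paper; the authors only verify \eqref{Eq_HL-variant1} for $n=1$ (by identifying it with \eqref{Eq_Ann}) and for $n=2$ (by an explicit $q$-series computation), and \eqref{Eq_HL-variant2} is not proved at all. Your proposal does not close this gap. The decisive flaw in your argument for \eqref{Eq_HL-variant1} is the claim that after passing to the variables $r_a=\alpha_{a-1}-\alpha_a$, $s_a=\beta_{a-1}-\beta_a$ "the $t$-binomial factors split into independent $\alpha$- and $\beta$-chains". They do not: for $k=1$ the $i=1$ factor in $\HL_{1,n;\boldsymbol{\mu}}$ is $\qbin{\mu^{(a-1)}_1-\mu^{(a)}_2}{\mu^{(a-1)}_1-\mu^{(a)}_1}_t=\qbin{\alpha_{a-1}-\beta_a}{\alpha_{a-1}-\alpha_a}_t$, which couples the first row at level $a-1$ to the second row at level $a$ for every $a$. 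This coupling is visible already in the paper's $n=2$ computation, where the reduced sum carries the cross-quadratic exponent $(r_1-r_2)^2+(r_1-r_3)^2+r_2^2+r_3^2$ and its identification with the two-fold sum $F^{(2)}_{2,0}$ requires a genuine chain of summations (the $q$-binomial theorem and both Euler identities), not a telescoping of one chain. Reducing the twisted sum to $F^{(m)}_{2,\delta}$ for general $n$ is precisely the open content of the conjecture, and "in the spirit of Proposition~\ref{Prop_GOW}" would require a new Bailey pair incorporating the twist $t^{\mu^{(0)}_1-\mu^{(1)}_1}$, which is exactly the kind of object the paper notes (in the remark closing the proof of Proposition~\ref{Prop_GOW}) is not currently known.

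For \eqref{Eq_HL-variant2} your own text concedes the essential point: you do not have the functional equations on the Hall--Littlewood side, because the twist $q^{-2\mu^{(1)}_1}$ obstructs a column-by-column recursion, and you defer to a hypothetical $(z,w)$-deformation and Bailey-type chain whose existence you describe as "the crux of the problem". That is an accurate assessment, but it means the second identity remains unproved under your plan. As it stands, the proposal is a research programme rather than a proof; the only parts that can be carried through with the tools in the paper are the cases the authors already verify.
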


Since $\mu^{(0)}_{2i-1}=\mu^{(0)}_{2i}$ the two conjectures are
consistent.
Equation \eqref{Eq_HL-variant1} for $n=1$ is \eqref{Eq_Ann} for $n=1$.
For $n=2$ it may be proved as follows.

\begin{proof}
Taking $\mu^{(0)}=(r_1,r_1)$ and $\mu^{(2)}=(r_2,r_3)$,
we must show that
\[
\mathcal{D}^{(2)}_{\Lap_1}(z,q)=
\sum_{r_1,r_2,r_3\geq 0} \frac{z^{r_1} 
q^{(r_1-r_2)^2+(r_1-r_3)^2+r_2^2+r_3^2-r_2+r_3}}
{(q^2;q^2)_{r_1-r_2}(q^2;q^2)_{r_2-r_3}(q^2;q^2)_{r_3}}.
\]
Since $\mathcal{D}^{(2)}_{\Lap_1}(z,q)=D_1^{(2)}(z,q)$, if follows from
\eqref{Eq_D-F} with $a=1$ and $n=2$ that 
\[
\mathcal{D}^{(2)}_{\Lap_1}(z,q)=
\sum_{r_1,r_2\geq 0} \frac{z^{r_1} 
q^{r_1^2+r_2^2}}
{(q;q)_{r_1-r_2}(q^2;q^2)_{r_2}}.
\]
If we can prove that these two multisums are the same we are done.
Equating coefficients of $z^{r_1} q^{-r_1^2}$, we are to show that
\[
\sum_{r_2\geq 0} \frac{q^{r_2^2}} {(q;q)_{r_1-r_2}(q^2;q^2)_{r_2}}
=\sum_{r_2,r_3\geq 0}
\frac{q^{(r_1-r_2-r_3)^2+(r_2-r_3)^2-r_2+r_3}}
{(q^2;q^2)_{r_1-r_2}(q^2;q^2)_{r_2-r_3}(q^2;q^2)_{r_3}}
\]
for all nonnegative integers $r_1$.
To this end, we multiply the above by $z^{r_1}$ and sum
over $r_1$.
By then making the substitution $r_1\mapsto r_1+r_2$ on the left
and $(r_1,r_2)\mapsto (r_1+r_2+r_3,r_2+r_3)$ on the right, this
boils down to showing that
\[
\sum_{r_1\geq 0}\frac{z^{r_1}}{(q;q)_{r_1}}\cdot
\sum_{r_2\geq 0}\frac{z^{r_2}q^{r_2^2}}{(q^2;q^2)_{r_2}}
=\sum_{r_1,r_3\geq 0}\frac{z^{r_1+r_3}q^{(r_1-r_3)^2}}
{(q^2;q^2)_{r_1}(q^2;q^2)_{r_3}}\cdot
\sum_{r_2\geq 0}\frac{(z/q)^{r_2}q^{r_2^2}}{(q^2;q^2)_{r_2}}
\]
for $\abs{z}<1$.
The sums over $r_1$ and $r_2$ on the left are can be carried out by
\cite[Equation (II.2)]{GR04}
\[
\sum_{n\geq 0} \frac{z^n}{(q;q)_n}=\frac{1}{(z;q)_{\infty}},
\quad\abs{z}<1,
\]
and \cite[Equation (II.1)]{GR04}
\begin{equation}\label{Eq_qEuler}
\sum_{n\geq 0} \frac{z^n q^{\binom{n}{2}}}{(q;q)_n}
=(-z;q)_{\infty}
\end{equation}
respectively.
On the right the sum over $r_2$ can also be performed by \eqref{Eq_qEuler}.
Moreover, we can use this same summation to sum over either $r_1$ or $r_3$.
Choosing $r_3$, we are left with
\[
\sum_{r_1\geq 0}
\frac{(-q/z;q^2)_{r_1}}{(q^2;q^2)_{r_1}}\, z^{2r_1}
=\frac{(-zq;q^2)_{\infty}}{(z^2;q^2)_{\infty}},
\]
which is a special case of the $q$-binomial theorem
\cite[Equation (II.3)]{GR04}
\[
\sum_{n\geq 0} \frac{(a;q)_n}{(q;q)_n}\,z^n=
\frac{(az;q)_{\infty}}{(z;q)_{\infty}},\quad \abs{z}<1. \qedhere
\]
\end{proof}

%%%%%%%%%%%%%%%%%%%%%%%%%%%%%%%%%%%%%%%%%%%%%%%%%%%%%%%%%%%%

\appendix

\section{Proof of the non-standard specialisations}

In this appendix we prove the non-standard specialisations given
in \eqref{Eq_A2n-product} and \eqref{Eq_D2n-product}, which were
first stated without proof in \cite{GOW16}.
The results presented here can be viewed as an addendum to the recent
survey \cite{BKMP24} of the Lepowsky and Wakimoto product formulas.
We should remark that there exists a second non-standard specialisation 
for the affine Lie algebra $\mathrm{D}^{(2)}_{n+1}$ that will not be 
considered below, see \cite[Theorem 5.14]{RW21}.
This specialisation differs from \eqref{Eq_NPS} in that
\[
\big(\Exp{-\alpha_0},\dots,\Exp{-\alpha_n}\big)
\mapsto (q,q^2,\dots,q^2,-1).
\]

Let $\gfrak=\gfrak(A)$ be an arbitrary affine Lie algebra with 
generalised Cartan matrix $A$ of size $(n+1)\times(n+1)$, and
let $L(\La)$ be a standard module of $\gfrak$ with normalised character 
$\chi_{\La}\in\mathbb{Z}[[\Exp{-\alpha_0},\dots,\Exp{-\alpha_n}]]$,
defined in the exact same manner as was done in Section~\ref{Sec_Lie}
for $\gfrak$ one of $\mathrm{A}^{(2)}_{2n}$, $\mathrm{C}^{(1)}_n$
or $\mathrm{D}^{(2)}_{n+1}$.
Denote the usual principal specialisation by $\phi_n$, that
is, $\phi_n:\mathbb{Z}[[\Exp{-\alpha_0},\dots,\Exp{-\alpha_n}]]
\to\mathbb{Z}[[q]]$ is given by $\phi_n(\Exp{-\alpha_i})\to q$
for all $0\leq i\leq n$.
Then \cite{Lepowsky82}
\begin{equation}\label{Eq_PS}
\phi_n(\chi_{\La})=
\prod \bigg(\frac{1-q^{\ip{\La+\rho}{\alpha}}}
{1-q^{\ip{\rho}{\alpha}}}\bigg)^{\mult(\alpha)},
\end{equation}
where the product runs over the positive roots $\alpha$ of the dual root 
system of $\gfrak$, i.e., the positive coroots of $\gfrak$.
To prove \eqref{Eq_PS}, one first applies $\phi_n$ to the 
Weyl--Kac formula \eqref{Eq_Weyl-Kac} (which holds for all $\gfrak(A)$).
To then turn the sum into a product requires the denominator or Macdonald
identity
\[
\sum_{w\in W}\sgn(w) \Exp{w(\rho)-\rho}=
\prod_{\alpha>0}(1-\Exp{-\alpha})^{\mult(\alpha)}
\]
applied to the case of the dual affine Lie algebra 
$\gfrak(\prescript{t}{}A)$.
In contrast, the specialisations of $\mathrm{A}^{(2)}_{2n}$
and $\mathrm{D}^{(2)}_{n+1}$ proven below require the Macdonald
identities for $\mathrm{B}^{(1)}_n$ and $\mathrm{D}^{(1)}_n$ 
respectively.
In other words, instead of dualising by reversing the arrows of the
Dynkin diagram of $\gfrak$, arrows between blue vertices are reversed
but in the case of a red-blue pair we have

\smallskip

\begin{center}
\begin{tikzpicture}[scale=0.6]
\draw (0, 0.07)--(1, 0.07);
\draw (0,-0.07)--(1,-0.07);
\draw (0.6,0.2)--(0.4,0)--(0.6,-0.2);
\draw (1,0)--(2,0);
\draw[densely dashed] (2,0)--(2.75,0);
\draw[fill=red] (0,0) circle (0.08cm);
\draw[fill=blue] (1,0) circle (0.08cm);
\draw[fill=blue] (2,0) circle (0.08cm);
\draw (4,0) node {$\longmapsto$};
\draw (6-0.707,0.707)--(6,0)--(6-0.707,-0.707);
\draw[densely dashed] (6,0)--(6.75,0);
\draw[fill=blue] (6,0) circle (0.08cm);
\draw[fill=blue] (6-0.707,0.707) circle (0.08cm);
\draw[fill=blue] (6-0.707,-0.707) circle (0.08cm);
\end{tikzpicture}
\end{center}

\noindent
This maps $\mathrm{A}^{(2)}_{2n}$ to $\mathrm{B}^{(1)}_n$ (for $n\geq 3$),
$\mathrm{D}^{(2)}_{n+1}$ to $\mathrm{D}^{(1)}_n$ (for $n\geq 4$),
and $\mathrm{C}^{(1)}_n$ to $\mathrm{D}^{(2)}_{n+1}$ (for $n\geq 2$).
For small values of $n$ the appropriate degenerations of
$\mathrm{B}^{(1)}_n$ and $\mathrm{D}^{(1)}_n$ need to be used instead.

\subsection{The \texorpdfstring{$\mathrm{A}^{(2)}_{2n}$}{A(2)2n} case}

\begin{proposition}
For $k\geq 0$ an integer or half-integer and $n\geq 1$ an integer,
parametrise $\La\in P_{+}^{2k}$ as in \eqref{Eq_La-para2},
where $(\la_1,\dots,\la_n)$ is a partition such that $\la_1\leq\floor{k}$.
Then
\begin{subequations}
\begin{align}\label{Eq_A2n-spec}
\varphi_n(\chi_{\La})
&=\frac{(q^{2k+2n+1};q^{2k+2n+1})_{\infty}^n}{(q;q)_{\infty}^n}
\prod_{i=1}^n \theta\big(q^{\la_i+n-i+1};q^{2k+2n+1}\big) \\
&\quad\times\prod_{1\leq i<j\leq n}
\theta\big(q^{\la_i-\la_j-i+j},q^{\la_i+\la_j+2n-i-j+2};q^{2k+2n+1}\big)
\notag
\end{align}
if $k$ is an integer, and
\begin{equation}\label{Eq_A2n-spec-zero}
\varphi_n(\chi_{\La})=0    
\end{equation}
\end{subequations}
if $k$ is a half-integer.
\end{proposition}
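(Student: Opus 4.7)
The strategy is to apply the Weyl--Kac character formula \eqref{Eq_Weyl-Kac} and then specialise via $\varphi_n$, reducing the resulting Weyl-group sum to an instance of the Macdonald identity for $\mathrm{B}^{(1)}_n$, in accordance with the arrow-reversal rule between Dynkin diagrams described earlier in this appendix. First I would parametrise the specialisation by a two-variable map $\Exp{-\alpha_0}\mapsto t$ and $\Exp{-\alpha_i}\mapsto q$ for $i\geq 1$, setting $t=-1$ at the end. Under this, the null root $\delta=2\alpha_0+2\alpha_1+\cdots+2\alpha_{n-1}+\alpha_n$ of $\mathrm{A}^{(2)}_{2n}$ specialises to $t^2q^{2n-1}$, which becomes $q^{2n-1}$ at $t=-1$.

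For the Weyl denominator $\prod_{\alpha>0}(1-\Exp{-\alpha})^{\mult(\alpha)}$, I would partition the positive real roots of $\mathrm{A}^{(2)}_{2n}$ into orbits under $\delta$-translation and use that the imaginary roots $m\delta$ all have multiplicity $n$. After $\varphi_n$ this factors into a power of $(q;q)_\infty^n$, coming from the imaginary root contributions, times finite products that can be rewritten via theta functions.

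For the numerator $N_\La:=\sum_{w\in W}\sgn(w)\,\Exp{w(\La+\rho)-\La-\rho}$, the key step is to match $\varphi_n(N_\La)$, up to a monomial prefactor, with the Kac--Peterson product form of the Macdonald identity for $\mathrm{B}^{(1)}_n$ at a suitable dominant integral weight. The finite parts of the Weyl groups of $\mathrm{A}^{(2)}_{2n}$ and $\mathrm{B}^{(1)}_n$ both coincide with the hyperoctahedral group, and the translation sublattices correspond up to rescaling under the specialisation. With $\La$ parametrised as in \eqref{Eq_La-para2} and $k$ a nonnegative integer, the shifted weight $\La+\rho$ is encoded by $(\la_1+n,\la_2+n-1,\ldots,\la_n+1)$, and the $\mathrm{B}^{(1)}_n$ Macdonald denominator at this weight produces a product of theta functions of modulus $2k+2n+1$. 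Dividing by the specialised Weyl denominator computed in the previous step and applying standard theta function simplifications then yields \eqref{Eq_A2n-spec}.

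For the half-integer $k$, the coefficient of $\La_0$ in $\La$ is odd, and under the above matching $\La+\rho$ corresponds to a weight that lies on a reflecting hyperplane of the affine Weyl group of $\mathrm{B}^{(1)}_n$. Equivalently, pre-multiplication by a suitable simple reflection gives a sign-reversing involution that pairs the terms of $\varphi_n(N_\La)$ with opposite signs, forcing $\varphi_n(\chi_\La)=0$ and yielding \eqref{Eq_A2n-spec-zero}. The main technical obstacle is the third step: the precise bookkeeping of how the affine Weyl group of $\mathrm{A}^{(2)}_{2n}$ is matched with that of $\mathrm{B}^{(1)}_n$ under $\varphi_n$, and verifying that the shifted weight $\La+\rho$ corresponds to the $\mathrm{B}^{(1)}_n$ weight whose Kac--Peterson product is exactly the theta product on the right of \eqref{Eq_A2n-spec}.
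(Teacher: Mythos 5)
Your strategy coincides with the paper's: specialise the Weyl--Kac formula and identify the resulting numerator with the $\mathrm{B}^{(1)}_n$ Macdonald identity, with the denominator contributing $(q;q)_\infty^{-n}$ after cancellation of theta factors. However, the step you flag as the ``main technical obstacle'' is genuinely the whole proof, and your sketch does not resolve it; moreover two of the claims you make about how it would be resolved are not quite right. First, the translation lattices do \emph{not} ``correspond up to rescaling'': the $\mathrm{B}^{(1)}_n$ Macdonald identity is a sum over $r\in\mathbb{Z}^n$ with $\abs{r}$ \emph{even}, whereas the specialised $\mathrm{A}^{(2)}_{2n}$ numerator is a sum over all of $\mathbb{Z}^n$. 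This index-2 discrepancy is exactly where the dichotomy between integer and half-integer $k$ enters: the paper first uses the substitution $(r_1,x_1)\mapsto(r_1+1,x_1/q)$ together with quasi-periodicity $\theta(a;q)=-a\,\theta(aq;q)$ to show that the odd-$\abs{r}$ sum equals $-\Pi_{\mathrm{B}}$, and then forms the sum and difference to obtain a full-lattice identity $\sum_{r\in\mathbb{Z}^n}(-\sigma)^{\abs{r}}(\cdots)=2\,\Pi_{\mathrm{B};\sigma}$ with $\Pi_{\mathrm{B};-1}=0$. The sign $(-\sigma)^{\abs{r}}$ is precisely what $\varphi_n(\Exp{-\alpha_0})=-1$ produces, with $\sigma=+1$ when $\kappa=2k+2n+1$ is odd ($k$ integral) and $\sigma=-1$ when $\kappa$ is even ($k$ half-integral).

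Second, the vanishing in the half-integer case is therefore not obtained by exhibiting a simple reflection fixing $\La+\rho$; it is the cancellation of the even- and odd-$\abs{r}$ halves of the lattice sum, i.e., a sign-reversing pairing induced by a \emph{translation} element combined with the specialisation, established via the quasi-periodicity computation above. Finally, the Weyl-group bookkeeping you defer is handled in the paper not from scratch but by invoking the determinantal rewriting of the $\mathrm{A}^{(2)}_{2n}$ Weyl--Kac formula in \cite[Lemma 2.2]{BW15}, which already organises the numerator as a $\mathbb{Z}^n$-indexed sum of $n\times n$ determinants in the variables $x_i=q\,\Exp{\alpha_0+\dots+\alpha_{i-1}}$; matching this with the Macdonald side then only requires expanding $\Delta_{\mathrm{B}}$ by the $\mathrm{B}_n$ Vandermonde determinant and using multilinearity. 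Without either that lemma or an equivalent computation, and without the parity-refined form of the Macdonald identity, your outline cannot be completed as written.
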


This should be compared with
\begin{align*}
\phi_n(\chi_{\La})
&=\frac{(q^{2n+1};q^{4n+2})_{\infty}(q^{2k+2n+1};q^{2k+2n+1})_{\infty}^n}
{(q;q^2)_{\infty}(q;q)_{\infty}^n} \\
&\quad\times
\prod_{i=1}^n \theta\big(q^{\la_i+n-i+1};q^{2k+2n+1}\big)
\theta\big(q^{2k-2\la_i+2i-1};q^{4k+4n+2}\big) \\
&\quad\times\prod_{1\leq i<j\leq n}
\theta\big(q^{\la_i-\la_j-i+j},q^{\la_i+\la_j+2n-i-j+2};q^{2k+2n+1}\big),
\end{align*}
as follows from \eqref{Eq_PS}.
Writing $\chi^{\mathfrak{g}}_{\La}$ instead of $\chi_{\La}$, 
we also remark that in the non-vanishing or integral-$k$ case 
\[
\frac{1}{(q;q^2)_{\infty}}\,
\varphi_n\Big(\chi^{\mathrm{A}^{(2)}_{2n}}_{\La}\Big)
=\phi_n\Big(\chi^{\mathrm{A}^{(2)}_{2n-1}}_{\La'}\Big),
\]
where
\[
\La'=(2k+1-\la_1-\la_2)\La_0+
(\la_1-\la_2)\La_1+\dots+(\la_{n-1}-\la_n)\La_{n-1}+\la_n\La_n
\in P_{+}^{2k+1}
\]
is a weight of $\mathrm{A}^{(2)}_{2n-1}$ and $1/(q;q^2)_{\infty}=
\phi_n\Big(\chi^{\mathrm{A}^{(2)}_{2n-1}}_{\La_0}\Big)$.

\begin{proof}
For $x=(x_1,\dots,x_n)$, let
\[
\Delta_{\mathrm{B}}(x):=
\prod_{i=1}^n(1-x_i)\prod_{1\leq i<j\leq n}(1-x_i/x_j)(1-x_ix_j)
\]
be the Vandermonde product for the root system $\mathrm{B}_n$.
As mentioned above, at the heart of the proof of \eqref{Eq_A2n-spec}
and \eqref{Eq_A2n-spec-zero} is the $\mathrm{B}_n^{(1)}$ Macdonald
identity \cite{Macdonald72}
\begin{align*}
& \sum_{\substack{r\in\mathbb{Z}^n \\[1pt] \abs{r}\text{ even}}}
\Delta_{\mathrm{B}}(xq^r)
\prod_{i=1}^n q^{(2n-1)\binom{r_i}{2}+(i-1)r_i} x_i^{(2n-1)r_i} \\
&\qquad=(q;q)_{\infty}^n \prod_{i=1}^n \theta(x_i;q)
\prod_{1\leq i<j\leq n} \theta(x_i/x_j,x_ix_j;q)=:\Pi_{\mathrm{B}}(x,q),
\end{align*}
which in the above explicit form holds for all $n\geq 1$.
By the substitution $(r_1,x_1)\mapsto (r_1+1,x_1/q)$ and the use of the
quasi-periodicity relation $\theta(a;q)=-a\theta(aq;q)$, this yields
the exact same identity as above except that the condition on the parity
of $\abs{r}:=r_1+\dots+r_n$ has switched from even to odd and the
right-hand side has picked up a minus sign.
Taking the sum respectively difference of the even and odd cases implies
\[
\sum_{r\in\mathbb{Z}^n}
\Delta_{\mathrm{B}}(xq^r) \prod_{i=1}^n (-\sigma)^{r_i}
q^{(2n-1)\binom{r_i}{2}+(i-1)r_i} x_i^{(2n-1)r_i} 
=2\,\Pi_{\mathrm{B};\sigma}(x,q),
\]
where $\sigma\in\{-1,1\}$ and
$\Pi_{\mathrm{B};1}(x,q)=\Pi_{\mathrm{B}}(x,q)$,
$\Pi_{\mathrm{B};-1}(x,q)=0$.
By the $\mathrm{B}_n$ Vandermonde determinant
\[ 
\Delta_{\mathrm{B}}(x)=
\det_{1\leq i,j\leq n}\big(x_i^{j-n}-x_i^{n-j+1}\big)
\prod_{i=1}^n x_i^{n-i}
\]
followed by multilinearity, this may also be written as
\begin{align*}
&\det_{1\leq i,j\leq n} \bigg( \sum_{r\in\mathbb{Z}}
(-\sigma)^r q^{(2n-1)\binom{r}{2}+(n-1)r} x_i^{(2n-1)r+n-i} 
\Big((x_iq^r)^{j-n}-(x_iq^r)^{n-j+1}\Big)\bigg) \\
&\qquad\qquad\qquad=2\,\Pi_{\mathrm{B};\sigma}(x,q).
\end{align*}
Writing the sum over $r$ as $\sum_r (a_r-b_r)$, this can be
replaced by $\sum_r (a_r-b_{-r})$. 
Then once again using multilinearity, we obtain
\begin{align*}
\sum_{r\in\mathbb{Z}^n} & \bigg(\det_{1\leq i,j\leq n}
\Big(x_i^{(2n-1)r_j+j-n}-x_i^{-(2n-1)r_j+n-j+1}\Big) \\
& \quad\times \prod_{i=1}^n (-\sigma)^{r_i}
q^{(2n-1)\binom{r_i}{2}+(i-1)r_i} x_i^{n-i}\bigg)
=2\,\Pi_{\mathrm{B};\sigma}(x,q).
\end{align*}

We are now ready to prove \eqref{Eq_A2n-spec} and 
\eqref{Eq_A2n-spec-zero}.
First we recall the rewriting of the Weyl--Kac character formula 
\eqref{Eq_Weyl-Kac} in the case of $\mathrm{A}^{(2)}_{2n}$ given in
\cite[Lemma 2.2]{BW15}:
\begin{align*}
\chi_{\La}&=\frac{1}{(q;q)_{\infty}^n
\prod_{i=1}^n \theta(x_i;q)\theta(x_i^2/q;q^2)
\prod_{1\leq i<j\leq n} x_j\theta(x_i/x_j,x_ix_j/q;q)} \\[2mm]
&\quad\times 
\sum_{r\in\mathbb{Z}^n} 
\det_{1\leq i,j\leq n}
\bigg( q^{\kappa\binom{r_i}{2}} x_i^{\kappa r_i+\la_i+n} 
\Big( (x_iq^{r_i})^{j-n-1-\la_j}
-(x_iq^{r_i-1})^{n-j+1+\la_j}\Big)\bigg),
\end{align*}
where $\kappa:=2k+2n+1$, $q:=\Exp{-\delta}$ and
$x_i:=q\Exp{\alpha_0+\dots+\alpha_{i-1}}$ 
for $1\leq i\leq n$.
The marks for $\mathrm{A}^{(2)}_{2n}$ are the comarks read in reverse
order, i.e., $a_i=a^{\vee}_{n-i}$.
Hence $\delta=2\alpha_0+\dots+2\alpha_{n-1}+\alpha_n$, so that
$\varphi_n(q)=q^{2n-1}$ and $\varphi_n(x_i)=-q^{2n-i}$.
This implies
\begin{align*}
\varphi_n(\chi_{\La})=\frac{1}{2(q;q)_{\infty}^n}
\sum_{r\in\mathbb{Z}^n}\bigg(& 
\prod_{i=1}^n (-\sigma)^{r_i}
p^{(2n-1)\binom{r_i}{2}+(2n-i)r_i} y_i^{n-i} \\
&\times \det_{1\leq i,j\leq n}\Big( 
y_j^{-(2n-1)r_i+i-n}-y_j^{(2n-1)r_i+n-i+1}\Big)\bigg),
\end{align*}
where $p:=q^{\kappa}$, $y_i:=q^{\la_i+n-i+1}$ and
$\sigma=1$ if $\kappa$ is odd and $\sigma=-1$ if $\kappa$ is even,
i.e., $\sigma=1$ if $k$ is an integer and $\sigma=-1$ if $k$ is a
half-integer.
Replacing $r_i\mapsto -r_i$ and interchanging $i$ and $j$ in the 
determinant yields
\[
\varphi_n(\chi_{\La})=\frac{1}{(q;q)_{\infty}^n}\,\Pi_{\mathrm{B};\sigma}(p,y).
\]
For $\sigma=-1$ this implies the vanishing of $\varphi_n(\chi_{\La})$ and
for $\sigma=1$ this yields the claimed product form since, for
$p=q^{\kappa}$ and $x_i=q^{\la_i+n-i+1}$, 
\[
\Pi_{\mathrm{B};1}(x,p)=
\prod_{i=1}^n \theta\big(q^{\la_i+n-i+1};q^{\kappa}\big)
\prod_{1\leq i<j\leq n}
\theta\big(q^{\la_i-\la_j-i+j},q^{\la_i+\la_j+2n-i-j+2};q^{\kappa}\big).
\qedhere
\]
\end{proof}

\subsection{The \texorpdfstring{$\mathrm{D}^{(2)}_{n+1}$}{D(2)n+1} case}

A half-partition $(\la_1,\dots,\la_n)$ is a weakly decreasing
sequence such that all $\la_i-1/2\in\mathbb{N}_0$ for all $i$.

\begin{proposition}\label{Prop_Dn}
For $k\geq 0$ an integer or half-integer and $n\geq 2$ an integer,
parametrise $\La\in P_{+}^{2k}$ as in \eqref{Eq_La-para2},
where $(\la_1,\dots,\la_n)$ is a partition or half-partition
such that $\la_1\leq k$.
Then
\begin{subequations}
\begin{align}\label{Eq_Dn-spec}
&\varphi_n(\chi_{\La}) \\
&\quad =\frac{(q^{2k+2n};q^{2k+2n})_{\infty}^n}
{(q^2;q^2)_{\infty}(q;q)_{\infty}^{n-1}}  
\prod_{1\leq i<j\leq n} 
\theta\big(q^{\la_i-\la_j-i+j},q^{\la_i+\la_j+2n-i-j+1};q^{2k+2n}\big)
\notag
\end{align}
if $k$ is an integer and $\la$ is a partition, and
\begin{equation}\label{Eq_Dn-spec-zero}
\varphi_n(\chi_{\La})=0
\end{equation}
\end{subequations}
if $k$ is a half-integer or $\la$ is a half-partition.
\end{proposition}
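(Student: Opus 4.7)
The plan is to mirror the proof of the $\mathrm{A}^{(2)}_{2n}$ case above, replacing the $\mathrm{B}^{(1)}_n$ Macdonald identity with the $\mathrm{D}^{(1)}_n$ Macdonald identity at the key step. That $\mathrm{D}^{(1)}_n$ is the relevant dualisation is consistent with the general rule (reverse arrows between blue--blue nodes and replace each red--blue endpoint by the appropriate $\mathrm{D}$-type fork) from the beginning of the appendix. First, I would derive a determinantal rewriting of the Weyl--Kac character formula for $\mathrm{D}^{(2)}_{n+1}$ in direct analogy with \cite[Lemma 2.2]{BW15}. This expresses $\chi_\La$ as a ratio of a determinantal lattice sum over $\mathbb{Z}^n$ to a product of modified theta functions. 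Because the Dynkin diagram of $\mathrm{D}^{(2)}_{n+1}$ has two marked endpoints $\alpha_0, \alpha_n \in \Delta^{\ast}$, the denominator will contain two ``short-root'' factors of type $\theta(\,\cdot\,;q^2)$, one per endpoint, while all intermediate nodes contribute standard factors $\theta(\,\cdot\,;q)$; upon applying $\varphi_n$ the former collapse to $(q^2;q^2)_{\infty}$ and the latter to $(q;q)_{\infty}^{n-1}$, accounting for the prefactor in \eqref{Eq_Dn-spec}.

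Second, I would apply $\varphi_n$ and set $p:=q^{2k+2n}$ together with $y_i:=q^{\la_i+n-i+c}$ (for a universal additive shift $c$ fixed by the rewriting). After reorganising the resulting determinant using multilinearity exactly as in the proof of \eqref{Eq_A2n-spec}, the sum should match the left-hand side of the $\mathrm{D}^{(1)}_n$ Macdonald identity
\[
\sum_{\substack{r\in\mathbb{Z}^n \\[1pt] \abs{r}\text{ even}}}
\Delta_{\mathrm{D}}(xq^r)\prod_{i=1}^n q^{(n-1)\binom{r_i}{2}+(i-1)r_i} x_i^{(2n-2)r_i}
=(q;q)_{\infty}^{n-1}(q^2;q^2)_{\infty}\prod_{1\leq i<j\leq n}\theta(x_i/x_j,x_ix_j;q),
\]
evaluated at $(x,q)\mapsto(y,p)$, where $\Delta_{\mathrm{D}}(x):=\prod_{1\leq i<j\leq n}(1-x_i/x_j)(1-x_ix_j)$ is the $\mathrm{D}_n$ Vandermonde product. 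This directly yields the claimed theta product in \eqref{Eq_Dn-spec}.

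Third, the parity analysis. As in the $\mathrm{A}^{(2)}_{2n}$ proof, substituting $(r_1,x_1)\mapsto(r_1+1,x_1/q)$ in the identity above and invoking $\theta(a;q)=-a\,\theta(aq;q)$ produces a companion odd-$\abs{r}$ identity with an overall sign. Taking sum and difference then yields identities summing over all $r\in\mathbb{Z}^n$ weighted by $\sigma^{\abs{r}}$ with $\sigma\in\{\pm 1\}$, whose right-hand side equals twice the theta product when $\sigma=+1$ and vanishes when $\sigma=-1$. In our application $\sigma$ is determined by a combination of the parity of $2(k-\la_1)$ (the coefficient of $\La_0$ in $\La$) and of $2\la_n$ (the coefficient of $\La_n$). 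The non-vanishing case $\sigma=+1$ is precisely when both coefficients are even, equivalently $k\in\mathbb{Z}$ and $\la$ an integer partition; the other three possibilities all force $\sigma=-1$ and yield \eqref{Eq_Dn-spec-zero}.

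The main obstacle is the sign bookkeeping. Unlike the $\mathrm{A}^{(2)}_{2n}$ case where only the single marked simple root $\alpha_0$ contributes a sign under $\varphi_n$, here both marked roots $\alpha_0$ and $\alpha_n$ contribute, and the resulting parities must be disentangled so as to reproduce \emph{both} independent vanishing conditions of Proposition~\ref{Prop_Dn} (half-integer $k$, and half-partition $\la$). A secondary technical task is to spell out the $\mathrm{D}^{(2)}_{n+1}$-analogue of \cite[Lemma 2.2]{BW15}, since this was only carried out there for $\mathrm{A}^{(2)}_{2n}$; once that rewriting is in place, the remainder of the argument parallels the $\mathrm{A}^{(2)}_{2n}$ proof closely.
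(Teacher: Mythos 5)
Your overall skeleton is right and matches the paper's proof: the paper does use the $\mathrm{D}^{(1)}_n$ Macdonald identity, the even/odd-$\abs{r}$ splitting via $(r_1,x_1)\mapsto(r_1+1,x_1/q)$ and quasi-periodicity to produce the sign $\sigma$, and the determinantal rewriting of the Weyl--Kac formula for $\mathrm{D}^{(2)}_{n+1}$ (which, incidentally, you do not need to derive yourself --- it is already available as \cite[Lemma 2.4]{BW15}, not just the $\mathrm{A}^{(2)}_{2n}$ case). Two smaller inaccuracies: the right-hand side of the $\mathrm{D}^{(1)}_n$ Macdonald identity is $(q;q)_{\infty}^{n}\prod_{i<j}\theta(x_i/x_j,x_ix_j;q)$, not $(q;q)_{\infty}^{n-1}(q^2;q^2)_{\infty}\prod_{i<j}\theta(\cdots)$; the prefactor $(q^2;q^2)_{\infty}(q;q)_{\infty}^{n-1}$ in \eqref{Eq_Dn-spec} comes from specialising the denominator of the Weyl--Kac rewriting, not from the Macdonald identity. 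Also the correct exponents are $q^{2(n-1)\binom{r_i}{2}+(i-1)r_i}x_i^{2(n-1)r_i}$.

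The genuine gap is exactly the point you flag as ``the main obstacle'' but then resolve incorrectly. You propose that both vanishing conditions (half-integer $k$ \emph{and} half-partition $\la$) are encoded in a single sign $\sigma$, with ``the other three possibilities all forcing $\sigma=-1$.'' This cannot work: the parity trick on the Macdonald sum produces only one sign $\sigma^{\abs{r}}$, and in the paper $\sigma$ is pinned down solely by the parity of $\kappa=2k+2n$, i.e.\ by whether $k$ is an integer. The half-partition vanishing comes from an entirely separate mechanism. The paper introduces a second parameter $\tau\in\{\pm1\}$ through a deformed $\mathrm{D}_n$ Vandermonde determinant
\[
\Delta_{\mathrm{D}}^{(\tau)}(x)=\frac{1}{2}\det_{1\leq i,j\leq n}
\big(x_i^{j-n}+\tau x_i^{n-j}\big)\prod_{i=1}^n x_i^{n-i},
\]
which equals $\Delta_{\mathrm{D}}(x)$ for $\tau=1$ but vanishes identically for $\tau=-1$ (the $j=n$ column is zero). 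Carrying this $\tau$ through the multilinearity manipulations yields an ``enhanced'' Macdonald identity whose right-hand side is $4\,\Pi_{\mathrm{D};\sigma,\tau}(x,q)$, zero whenever $\tau=-1$. When the specialised Weyl--Kac numerator is matched against this identity with $y_i=q^{\la_i+n-i+1/2}$, the value of $\tau$ is forced to be $+1$ for $\la$ a partition and $-1$ for $\la$ a half-partition, which is what produces \eqref{Eq_Dn-spec-zero} in the half-partition case. Without this second, determinant-based vanishing mechanism your argument establishes \eqref{Eq_Dn-spec} and the half-integer-$k$ vanishing but cannot reach the half-partition vanishing, so the proof as proposed is incomplete.
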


This is to be compared with the principal specialisation
\begin{align*}
\phi_n(\chi_{\La})
&=\frac{(q^{2k+2n};q^{2k+2n})_{\infty}^n}
{(q;q^2)_{\infty}(q;q)_{\infty}^n} 
\prod_{i=1}^n \theta\big(q^{2\la_i+2n-2i+1};q^{2k+2n}\big) \\
&\quad\times\prod_{1\leq i<j\leq n}
\theta\big(q^{\la_i-\la_j-i+j},q^{\la_i+\la_j+2n-i-j+1};q^{2k+2n}\big)
\end{align*}
which holds for any level and $\lambda$ a partition or half-partition.

\begin{proof}
The key steps of the proof are identical to the
$\mathrm{A}^{(2)}_{2n}$ case with the exception of how we deal
with the half-partitions.
Starting point is the $\mathrm{D}^{(1)}_n$ Macdonald identity
\cite{Macdonald72}
\begin{align*}
&\sum_{\substack{r\in\mathbb{Z}^n \\[1pt] \abs{r}\text{ even}}}
\Delta_{\mathrm{D}}(xq^r)
\prod_{i=1}^n q^{2(n-1)\binom{r_i}{2}+(i-1)r_i} x_i^{2(n-1)r_i} \\
&\qquad =(q;q)_{\infty}^n
\prod_{1\leq i<j\leq n} \theta(x_i/x_j,x_ix_j;q)
=:\Pi_{\mathrm{D}}(x,q),
\end{align*}
where $n\geq 2$ and, for $x=(x_1,\dots,x_n)$,
\[
\Delta_{\mathrm{D}}(x):=\prod_{1\leq i<j\leq n}(1-x_i/x_j)(1-x_ix_j).
\]
As before, we carry out the substitution
$(r_1,x_1)\mapsto (r_1+1,x_1/q)$ and use quasi-periodicity.
This time the exact same identity as above arises but with
the condition on the parity of $\abs{r}$ changed to odd.
Taking the sum respectively difference of the even and odd
cases implies
\[
\sum_{r\in\mathbb{Z}^n} \Delta_{\mathrm{D}}(xq^r)
\prod_{i=1}^n \sigma^{r_i} q^{2(n-1)\binom{r_i}{2}+(i-1)r_i}
x_i^{2(n-1)r_i}=2\,\Pi_{\mathrm{D};\sigma}(x,q),
\]
where $\sigma\in\{-1,1\}$ and
$\Pi_{\mathrm{D};1}(x,q)=\Pi_{\mathrm{D}}(x,q)$,
$\Pi_{\mathrm{D};-1}(x,q)=0$.
To subsequently be able to handle the half-partition case, we enhance
the above identity to
\[
\sum_{r\in\mathbb{Z}^n} \Delta_{\mathrm{D}}^{(\tau)}(xq^r)
\prod_{i=1}^n \sigma^{r_i} q^{2(n-1)\binom{r_i}{2}+(i-1)r_i}
x_i^{2(n-1)r_i}=2\,\Pi_{\mathrm{D};\sigma,\tau}(x,q),
\]
where $\tau\in\{-1,1\}$,  
\[
\Delta_{\mathrm{D}}^{(\tau)}(x)=
\begin{cases}
\Delta_{\mathrm{D}}(x) & \text{if $\tau=1$}, \\
0 & \text{if $\tau=-1$},
\end{cases}
\quad\text{and}\quad
\Pi_{\mathrm{D};\sigma,\tau}(x,q)=
\begin{cases}
\Pi_{\mathrm{D};\sigma}(x,q) & \text{if $\tau=1$}, \\
0 & \text{if $\tau=-1$}.
\end{cases}
\]
In other words, for $\tau=-1$ we simply get $0=0$.
Applying the $\mathrm{D}_n$-Vandermonde determinant
\[
\Delta_{\mathrm{D}}^{(\tau)}(x)=
\frac{1}{2}\det_{1\leq i,j\leq n}\big(x_i^{j-n}+\tau x_i^{n-j}\big)
\prod_{i=1}^n x_i^{n-i} 
\]
and then carrying out the same sequence of steps as in the
$\mathrm{A}^{(2)}_{2n}$ case, we obtain
\begin{align}\label{Eq_D-Macdonald}
\sum_{r\in\mathbb{Z}^n}&\bigg(\det_{1\leq i,j\leq n}
\Big(x_i^{2(n-1)r_j+j-n}+\tau x_i^{-2(n-1)r_j+n-j}\Big) \\
&\quad\times\prod_{i=1}^n \sigma^{r_i}
q^{2(n-1)\binom{r_i}{2}+(i-1)r_i} x_i^{n-i}
=4\,\Pi_{\mathrm{D};\sigma,\tau}(x,q). \notag
\end{align}

Next we recall the rewriting of the Weyl--Kac character formula 
\eqref{Eq_Weyl-Kac} in the case of $\mathrm{D}^{(2)}_{n+1}$
as given in \cite[Lemma 2.4]{BW15}:
\begin{align*}
\chi_{\La}&=\frac{1}{(q^2;q^2)_{\infty}^{n-1}(q;q)_{\infty}
\prod_{i=1}^n \theta(x_i;q)
\prod_{1\leq i<j\leq n} x_j \theta(x_i/x_j,x_ix_j;q^2)} \\[2mm]
&\quad\times \sum_{r\in\mathbb{Z}^n} 
\det_{1\leq i,j\leq n} \bigg(
q^{\kappa r_i^2-(2n+2\la_i-1)r_i} x_i^{\kappa r_i} \\
& \qquad\qquad\qquad\qquad\times
\Big( (x_iq^{2r_i})^{j-1+\la_i-\la_j}-
(x_iq^{2r_i})^{2n-j+\la_i+\la_j}\Big)\bigg),
\end{align*}
where $\kappa:=2k+2n$, $q:=\Exp{-\delta}$ and
$x_i:=\Exp{-\alpha_i-\cdots-\alpha_n}$ for $1\leq i\leq n$.
Since the marks for $\mathrm{D}^{(2)}_{n+1}$ are the comarks for 
$\mathrm{C}^{(1)}_n$, $\delta=\alpha_0+\alpha_1+\dots+\alpha_n$.
Hence $\varphi_n(q)=q^{n-1}$ and $\varphi_n(x_i)=-q^{n-i}$, leading to
\begin{align*}
\varphi_n(\chi_{\La})&=
\frac{1}{4(q;q)_{\infty}^{n-1}(q^2;q^2)_{\infty}} \\
&\quad\times \sum_{r\in\mathbb{Z}^n}\bigg( \prod_{i=1}^n \sigma^{r_i} 
p^{2(n-1)\binom{-r_i}{2}-(i-1)r_i} y_i^{n-i} \\
&\qquad\qquad\qquad\qquad\times 
\det_{1\leq i,j\leq n}
\Big(y_j^{-2(n-1)r_i+i-n}+\tau y_j^{2(n-1)r_i+n-i}\Big)\bigg),
\end{align*}
where $p:=q^{\kappa}$, $y_i:=q^{\la_i+n-i+1/2}$, and
$\sigma=1$ if $\kappa$ is even (i.e., $k$ is an integer),
$\sigma=-1$ if $\kappa$ is odd (i.e., $k$ is a half-integer),
$\tau=1$ if $\la$ is a partition and $\tau=-1$ if $\la$ is a
half-partition.
Replacing $r_i\mapsto -r_i$ and interchanging $i$ and $j$ in the
determinant, it follows from \eqref{Eq_D-Macdonald} that
\[
\varphi_n(\chi_{\La})=\frac{\Pi_{\mathrm{D};\sigma,\tau}(y,p)}
{(q;q)_{\infty}^{n-1}(q^2;q^2)_{\infty}}.
\]
For $\sigma=\tau=1$ this yields the product-form \eqref{Eq_Dn-spec}
and for $\sigma=-1$ or $\tau=-1$ it implies \eqref{Eq_Dn-spec-zero}.
\end{proof}

%%%%%%%%%%%%%%%%%%%%%%%%%%%%%%%%%%%%%%%%%%%%%%%%%%%%%%%%%%%%

\bibliographystyle{alpha}

\end{document}